\newcommand{\cB}{\mathcal{B}}
\newcommand{\cC}{\mathcal{C}}
\newcommand{\cK}{\mathcal{K}}
\newcommand{\cP}{\mathcal{P}}
\newcommand{\cV}{\mathcal{V}}
\newcommand{\rB}{\mathrm{B}}
\newcommand{\rC}{\mathrm{C}}
\newcommand{\rH}{\mathrm{H}}
\newcommand{\rU}{\mathrm{U}}
\newcommand{\rZ}{\mathrm{Z}}
\newcommand{\sP}{\mathscr{P}}
\newcommand{\sU}{\mathscr{U}}
\newcommand{\Hom}{\mathrm{Hom}}
\def\si{\sigma}
\newcommand{\Si}{\Sigma}
\newcommand{\ga}{\gamma}
\newcommand{\io}{\iota}
\newcommand{\ute}{\underline{\theta}}
\newcommand{\sst}[1]{\scriptscriptstyle{#1}}
\author{John E. Roberts$^{(1)}$, Giuseppe Ruzzi$^{(1)}$, 
        Ezio Vasselli$^{(2)}$ \\[5pt] 
\small{$^{(1)}$ Dipartimento di Matematica, Universit\`a di 
       Roma ``Tor Vergata'' }\\
      \small{Via della Ricerca Scientifica I-00133, Roma,  Italy} \\[5pt] 
\small{$^{(2)}$ Dipartimento di Matematica, Universit\`a di 
       Roma ``La Sapienza'' }\\
      \small{Piazzale Aldo Moro 5, I-00185 Roma, Italy}\\[5pt] 
    \small{\texttt{roberts@mat.uniroma2.it}},\\ 
            \small{\texttt{ruzzi@mat.uniroma2.it}}, \\ 
            \small{\texttt{vasselli@mat.uniroma2.it}}}
\title{A theory of bundles over posets}
\date{}
\begin{document}

  \maketitle


  \theoremstyle{plain}
  \newtheorem{definition}{Definition}[section]
  \newtheorem{theorem}[definition]{Theorem}
  \newtheorem{proposition}[definition]{Proposition}
  \newtheorem{corollary}[definition]{Corollary}
  \newtheorem{lemma}[definition]{Lemma}

  \theoremstyle{definition}
  \newtheorem{remark}[definition]{Remark}
  \newtheorem{example}[definition]{Example}

\theoremstyle{definition}
  \newtheorem{ass}{\underline{\textit{Assumption}}}[section]

\section{Abstract} 

In algebraic quantum field theory the spacetime manifold is replaced 
by a suitable base for its topology ordered under inclusion. We 
explain how certain topological invariants of the manifold can be 
computed in terms of the base poset. We develop a theory of 
connections and curvature for bundles over posets in search of 
a formulation of gauge theories in algebraic quantum field theory.



\section{Introduction} 

This paper is motivated by Quantum Field Theory. Whilst the pseudo-Rie-
mannian spacetime manifold enters directly into the formulation of  
Classical Field Theory, it enters into Quantum Field Theory only 
through a suitable base for its topology ordered under inclusion. 
This raises the question as to what extent the topological data of the 
spacetime manifold are still encoded in the partially ordered set. 
Positive answers have been given as far as connectivity \cite{Rob03} and the 
fundamental group go \cite{Rob03,Ruz05} and we shall see 
in this paper that the 
same applies to the first locally constant cohomology group.\smallskip 

Perhaps the most important open question in Quantum Field Theory 
concerns gauge theories. There is no formulation of gauge theories 
which goes beyond a perturbative framework. Whilst it is too much to 
hope for a single rigorous example of a gauge theory in the near 
future, one might hope that certain of their structural features could 
be axiomatized so as to be able to predict aspects of their behaviour.
In particular, one might hope to say something about their 
superselection structure which remains a problem even for 
Quantum Electrodynamics.\smallskip 

Classical gauge theories are formulated in the language of differential 
geometry: there are principal bundles, associated bundles, connections 
and curvature, all referring to the underlying spacetime manifold. The 
very least change to be made to adapt the formalism to the quantum case 
would be to use the analogues of the structures of differential geometry 
over the poset derived from a base for the topology of the spacetime 
manifold. These analogues form the subject of this paper.\smallskip

Here we now provide an outline of the paper. \smallskip 

In Section \ref{X} we describe a symmetric simplicial set
$\tilde \Si_*(K)$ associated with a poset $K$. In particular we
observe that the nerve $\Si_*(K)$ is a subsimplicial set and that 
the fundamental groupoid  of $\tilde \Si_*(K)$ is isomorphic to that
of $\tilde \Si_*(K^\circ)$ 
associated with the opposite poset $K^\circ$.\smallskip 

In Section \ref{A} we introduce a class of bundles over posets: 
the \emph{net bundles}. A net bundle $\cB$ over a poset $K$ yields 
a fibration of symmetric simplicial sets 
$\pi_*:\tilde \Si_*(B)\to \tilde \Si_*(K)$ with a \emph{net structure} 
$J$: a 1--cocycle of the nerve $\Si_*(K)$ taking values in the groupoid 
of bijections between the fibres. 
The net structure is used in place of continuity (or
differentiability) and morphisms, 
cross sections and connections should be compatible with the net 
structure. A connection, in particular, is defined as an 
extension of the net structure to the simplicial 
set $\tilde \Si_*(K)$.  An important feature is that any net bundle 
admits one, and only one, 
\emph{flat connection}: the unique 1--cocycle $\tilde \Si_*(K)$
extending the net structure $J$ (Proposition \ref{Ab:3}). 
In accordance with these ideas, principal net bundles are defined in 
Section \ref{B} by adding a suitable action of a group on 
the total space. \smallskip

The analysis of principal net bundles 
relies on the observation that   
local representatives of connections, morphisms and
cross sections are all \emph{locally constant}. 
So, in Section \ref{C}, we exploit  this feature to  provide  an equivalent 
description of principal net bundles and their connections  in terms of the
cohomology of posets as described in \cite{RR06}:
the category of 
principal net bundles of a poset $K$ having  structure group $G$ is
equivalent to the category 
of 1--cocycles of $K$ taking values in $G$
(and analogously for connections, see Theorem \ref{Cb:2}).
Afterwards,  in Section \ref{D}, we transport  the notion of curvature, 
the relation between flatness and homotopy,  the existence of 
non-flat connections, and the Ambrose-Singer theorem
from the cohomology of posets to principal net bundles.\smallskip

In Section \ref{E}, we introduce the \v Cech cohomology of a poset.
Then, we  specialize our discussion to the 
poset of open, contractible subsets of a manifold. In this case, 
the above constructions yield 
the locally constant cohomology of the manifold,
which, as is well known, describes the category of flat
bundles (Theorem \ref{thm_hmor} and Proposition
\ref{prop_c_lcc}).\smallskip

We conclude the paper with an appendix recalling briefly the results
of  \cite{RR06}.


\section{Homotopy of posets}
\label{X}
In this section we analyze the
simplicial sets associated with 
a poset. We start by introducing 
symmetric simplicial sets and defining their fundamental groupoid.
Afterwards, we consider symmetric simplicial sets associated with 
categories and establish a relation between the fundamental groupoid 
of a category and that of the  corresponding opposite category.
Then we specialize to posets. We conclude with some remarks 
on the topology of partially ordered sets.  
References for this section  are \cite{May, RR06}.

\subsection{Simplicial Sets} 
\label{Xa}
Our investigation of the relation between invariants of a topological space 
and those of a suitable base for its topology ordered under inclusion makes 
substantial use of simplicial sets. A simplicial set is a contravariant 
functor from the simplicial category $\Delta^+$ to the category of sets. 
$\Delta^+$ is a subcategory of the category of sets having as objects 
$n:=\{0,1,\dots,n-1\}$, $n\in\mathbb N$ and as mappings the order 
preserving mappings. A simplicial set has a well known description 
in terms of generators, the face and degeneracy maps, and relations. 
We use the standard notation $\partial_i$ and $\sigma_j$ for the face 
and degeneracy maps, and denote the compositions
$\partial_{i}\partial_{j}$, $\si_{i}\si_{j}$, respectively, by 
$\partial_{ij}$, $\si_{ij}$.
A path in a simplicial set is an expression 
of the form $p:=b_n*b_{n-1}*\cdots*b_1$ where the $b_i$ are $1$--simplices 
and $\partial_0b_i=\partial_1b_{i+1}$ for $i=1,2,\dots,n-1$. We set 
$\partial_1p:=\partial_1b_1$ and $\partial_0p:=\partial_0b_n$. Concatenation 
gives us an obvious associative composition law for paths and in this way 
we get a category without units.\smallskip 

Homotopy provides us with an equivalence relation on this structure. This 
is the equivalence relation generated by saying that two paths of the form 
$p=b_n*b_{n-1}*\cdots*b_i*\partial_1c*b_{i-1}*\cdots*b_1$ and 
$q=b_n*b_{n-1}*\cdots*b_i*\partial_0c*\partial_2c*b_{i-1}*\cdots*b_1$, 
where $c$ is a $2$--simplex, are equivalent, $p\sim q$. Quotienting by 
this equivalence relation yields the homotopy category of the simplicial 
set.\smallskip 

We shall mainly use symmetric simplicial sets. These are contravariant functors 
from $\Delta^s$ to the category of sets, where $\Delta^s$ is the full subcategory 
of the category of sets with the same objects as $\Delta^+$. A symmetric 
simplicial set also has a description in terms of generators and relations, 
 where the generators now include the permutations of adjacent vertices, 
denoted $\tau_i$. In a symmetric simplicial set we define the reverse of a 
$1$--simplex $b$ to be the $1$--simplex $\overline{b}:=\tau_0b$ 
and the reverse of a path $p=b_n*b_{n-1}*\cdots*b_1$ is the path 
$\overline{p}:=\tau_0b_1*\tau_0b_2*\cdots*\tau_0b_n$. The reverse acts as 
an inverse after taking equivalence classes so the homotopy category 
becomes a homotopy groupoid.\smallskip 

We shall be concerned here with three different simplicial sets that can be 
associated with a poset and we give their definitions not just for a poset 
but for an arbitrary category $\cC$. The first denoted $\Sigma_*(\cC)$ is 
just the usual nerve of the category. Thus the $0$--simplices are just the 
objects of $\cC$, the $1$--simplices are the arrows of $\cC$ and a 
$2$--simplex $c$ is made up of its three faces which are arrows satisfying 
$\partial_0c\partial_2c=\partial_1c$. The explicit form of higher simplices 
will not be needed in this paper. The homotopy category of $\Sigma_*(\cC)$ 
is canonically isomorphic to $\cC$ itself. 

The second simplicial set is just the nerve $\Sigma_*(\hat\cC)$ of the 
category of fractions $\hat\cC$ of $\cC$ \cite{GZ}. 
The proof of the result in this section requires 
some knowledge of how $\hat\cC$ is constructed. Consider $\cC$ and the 
opposite category $\cC^{\circ}$. These categories have the same set of 
objects so we may consider paths 
$p:=b_n*b_{n-1}*\cdots*b_1$ where $\partial_0b_i=\partial_1b_{i+1}$, 
$i=1,2,\dots,n-1$ as before but now the $b_i$ can be taken at liberty to 
be arrows of $\cC$ or $\cC^{\circ}$. We now take the equivalence relation 
generated by homotopy within adjacent arrows of $\cC$, homotopy 
within adjacent arrows of $\cC^{\circ}$ and two further relations 
$p*b*b^{-1}*q\sim p*q$ where $b^{-1}$ denotes the arrow corresponding 
to $b$ in the corresponding opposite category and where $p$ and $q$ are 
not both the empty path. Finally $b*b^{-1}\sim 1_{\partial_0b}$. Quotienting 
by this equivalence relation yields the category of fractions $\hat\cC$. The 
arrows of $\hat\cC$ can be written in normal form.  The units of $\hat\cC$ 
on their own are in normal form. The other terms in normal form involve 
alternate compositions of  arrows of $\cC$ and of $\cC^{\circ}$ but no units 
nor compositions of an arrow and its inverse. Every arrow of $\hat\cC$ is 
invertible so that $\Sigma_*(\hat\cC)$ is in a natural way a symmetric 
simplicial set.\smallskip 

The third simplicial set $\tilde\Sigma_*(\cC)$ is also symmetric and is 
constructed as follows. Consider the poset $P_n$ of non-void subsets 
of $\{0,1,\dots,n-1\}$ ordered under inclusion. Any mapping $f$ from 
$\{0,1,\dots,m-1\}$ to $\{0,1,\dots,n-1\}$ induces an order preserving 
mapping from $P_m$ to $P_n$. Regarding the $P_n$ as categories, 
we have realized $\Delta^s$ as a subcategory of the category of 
categories. We then get a symmetric simplicial set where an 
$n$--simplex of $\tilde\Sigma_*(\cC)$ is a functor from $P_n$ to $\cC$.
  A $1$--simplex of $\tilde\Sigma_*(\cC)$ is a pair $(b_0,b_1)$ 
    of arrows of $\cC$ with $\partial_0b_0=\partial_0b_1$ and 
    $\partial_0(b_0,b_1)=\partial_1b_0$, 
    $\partial_1(b_0,b_1)=\partial_1b_1$. A $2$--simplex of 
    $\tilde\Sigma_*(\cC)$ is a set 
    $$(c_0,c_1,c_2;c_{01},c_{02},c_{10},c_{12},c_{20},c_{21})$$ 
    of nine arrows of $\cC$ with
    $$c_0c_{00}=c_1c_{10},\quad c_0c_{01}=c_2c_{20},\quad 
    c_1c_{11}=c_2c_{20}.$$
    The faces of this $2$--simplex $c$ are given by 
    $\partial_0c=(c_{20},c_{10})$, $\partial_1c=(c_{21},c_{01})$, 
    $\partial_2c=(c_{12},c_{02})$.
    An explicit description of the higher simplices will not be 
    needed.
\begin{theorem}
\label{Xa:1}
The homotopy groupoids of $\Sigma_*(\hat\cC)$ 
and $\tilde\Sigma_*(\cC)$ are isomorphic.
\end{theorem}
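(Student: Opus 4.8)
The plan is to identify each of the two homotopy groupoids with the groupoid of fractions $\hat\cC$. For $\Sigma_*(\hat\cC)$ this is essentially given by what is recalled in the excerpt: the homotopy category of the nerve of a category is canonically that category, and since every arrow of $\hat\cC$ is invertible the resulting homotopy category is already a groupoid, namely $\hat\cC$. It then remains to identify the homotopy groupoid of $\tilde\Sigma_*(\cC)$ with $\hat\cC$ by a functor that is the identity on objects, both simplicial sets having the objects of $\cC$ as their vertices.

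The cleanest route is to realize this functor from an actual morphism of symmetric simplicial sets $\pi\colon\tilde\Sigma_*(\cC)\to\Sigma_*(\hat\cC)$. An $n$--simplex of $\tilde\Sigma_*(\cC)$ is a functor $\phi$ from the poset $P_n$ of non-void subsets of $\{0,\dots,n-1\}$ into $\cC$; composing with the localization $\cC\to\hat\cC$ and writing $\alpha_i$ for the (now invertible) image of the inclusion $\{i\}\subseteq\{0,\dots,n-1\}$, the family $g_{ij}:=\alpha_j^{-1}\alpha_i$ satisfies $g_{jk}g_{ij}=g_{ik}$ and $g_{ii}=1$ and hence is an $n$--simplex of $\Sigma_*(\hat\cC)$. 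Compatibility of $\pi$ with the face, degeneracy and permutation operators is routine, since each $g_{ij}$ depends only on $\phi$ restricted to $\{i\},\{j\}$ and a common upper bound. On a $1$--simplex $(b_0,b_1)$, whose two arrows share the target $\partial_0 b_0=\partial_0 b_1$, the map reads $(b_0,b_1)\mapsto b_0^{-1}b_1$. Being a simplicial map, $\pi$ preserves paths, reverses and $2$--simplices, so it descends to a functor $F$ of homotopy groupoids with no separate homotopy-invariance computation required; in particular reverses are carried to inverses.

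That $F$ is the identity on objects and full is immediate: an arrow $a\colon x\to y$ of $\cC$ is the image of the $1$--simplex $(1_y,a)$, and the arrow $b^{-1}$ of $\cC^{\circ}$ corresponding to $b\colon y\to x$ is the image of $(b,1_x)$, so every generator of $\hat\cC$, and hence every arrow, lies in the image of $F$.

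The main obstacle is the faithfulness of $F$. The geometric input is a decomposition, up to homotopy, of an arbitrary $1$--simplex: with $t$ the common target one has $(b_0,b_1)\sim(b_0,1_t)*(1_t,b_1)$, realized by a single $2$--simplex of $\tilde\Sigma_*(\cC)$, i.e.\ one functor from the poset of non-void subsets of $\{0,1,2\}$ into $\cC$. Iterating, every homotopy class of paths in $\tilde\Sigma_*(\cC)$ is represented by a word in the two families of generators $(1_y,a)$ and $(b,1_x)$, and the homotopies available between such words are exactly the images of $2$--simplices; one checks these are precisely the defining relations of $\hat\cC$, namely composition within $\cC$, composition within $\cC^{\circ}$, and the cancellations $b*b^{-1}\sim 1$. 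To conclude that no further identifications occur I would invoke the normal form for arrows of $\hat\cC$ recalled in the excerpt: each class reduces up to homotopy to the generator-word of a normal form, and uniqueness of the normal form makes $F$ injective on morphisms. Together with fullness this yields the desired isomorphism of homotopy groupoids.
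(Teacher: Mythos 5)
Your proposal is correct and takes essentially the same route as the paper: the same comparison map $(b_0,b_1)\mapsto b_0^{-1}b_1$, the same generator $1$--simplices $(\sigma_0\partial_0b,\,b)$ and $(b,\,\sigma_0\partial_0b)$, the same realization of the normal-form reduction moves of $\hat\cC$ by explicit $2$--simplices, and the same final appeal to uniqueness of normal forms. Your repackaging --- obtaining homotopy invariance for free by exhibiting the map as a morphism of symmetric simplicial sets, and concluding via bijectivity on objects plus fullness and faithfulness instead of constructing the paper's explicit inverse $\psi$ --- is an organizational difference only.
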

\begin{proof} 
Given a path $p:=b_0*b_1*\cdot*b_n$ in $\tilde\Sigma_1(\cC)$ 
    we define a map $\phi$ into paths in $\Sigma_1(\hat\cC)$, setting     
    $\phi(p):=\phi(b_0)\phi(b_1)\cdots\phi(b_n)$, 
    where for $b:=(b_0,b_1)$, 
    $\phi(b):=b_0^{-1}b_1\in\Sigma_1(\hat\cC)$. 
    If $p\sim p'$ then using the barycentric decomposition 
    inherent in a $2$--simplex $c$ of $\tilde\Sigma_*(\cC)$, we see 
    that 
\[
c_{00}^{-1}c_{01}c_{20}^{-1}c_{21}=c_{10}^{-1}c_{11}.
\]
    Hence $\phi(p)\sim\phi(p')$. Conversely, given 
    a path $\hat p\in\Sigma_1(\hat\cC)$,
\[
\hat p=\hat b_0*\hat b_1*\cdots *\hat b_n,
\] 
 each $\hat b_i$ being an arrow in $\hat\cC$, we set 
    $\psi(\hat p)=\psi(\hat b_0)*\psi(\hat b_1)*\cdots*\psi(\hat b_n)$    
    where $\psi(\hat b_i)$ is defined using the normal form of $\hat b_i$. 
  
 If the normal form of $\hat b$ is $b_0b_1\cdots b_m$ then 
    $\psi(\hat b)=\psi(b_0)*\psi(b_1)*\cdots*\psi(b_m)$ where 
    $\psi(b):=(\sigma_0\partial_0b,b)$ for $b$ an arrow of $\cC$ and 
    $\psi(b^{-1}):=(b,\sigma_0\partial_0b)$ when $b^{-1}$ is an 
    arrow of $\cC^{\circ}$. For a unit we have 
    $\psi(\sigma_0a)=(\sigma_0a,\sigma_0a)$. We claim that 
    $\hat p\sim\hat p'$ implies $\psi(\hat p)\sim\psi(\hat p')$. 
    It suffices to show that 
    $\psi(\hat p)\sim\psi(\hat b_0\hat b_1\cdots\hat b_n)$ 
    or that 
    $\psi(b_0)*\cdots*\psi(b_k)\sim\psi(b_0b_1\cdots b_k)$.
    Thus the problem has been reduced to showing that 
    the homotopy class of the left hand side does not 
    change as $b_0b_1\dots b_k$ is put into normal 
    form. This can be done using the following moves: 
    composing two adjacent arrows of $\cC$, 
    composing two adjacent arrows of $\cC^{\circ}$, 
    removing an arrow and its inverse if they are 
    adjacent unless they are the only arrows in 
    which case they are to be replaced by the identity. 
    In the first case it is enough to note that 
    $$(\sigma_0\partial_0b,b)*(\sigma_0\partial_0b',b')
    \sim(\sigma_0\partial_0b,bb')$$ 
    as follows by taking a $2$--simplex with 
    $c_0=c_1=\sigma_0\partial_0b$ and $c_3=b$. 
    The second case follows by analogy. For the 
    third case we need to know that 
    $$(b,\sigma_0\partial_0b)*(\sigma_0\partial_0b,b)
    \sim(\sigma_0\partial_1b,\sigma_0\partial_1b)$$ 
    as follows by taking a $2$--simplex $c$ with 
    $c_0=c_2=\sigma_0\partial_0b$ and $c_1=b$.
    \smallskip

    To see that $\psi\circ\phi$ induces the identity on homotopy 
    classes, it is enough to note that, choosing a 
    $2$--simplex $c$ with $c_0=c_1=c_2=\sigma_0\partial_0b$,  
    $(b,\sigma_0\partial_0b)*(\sigma_0\partial_0b',b')$ is 
    homotopic to $(b,b')$ when $\partial_0b=\partial_0b'$. 
    This same observation suffices to show  that $\phi\circ\psi$ 
    induces the identity on homotopy classes since we may 
    suppose that $\hat p$ is in normal form. Hence $\tilde\Sigma_*(\cC)$ 
    and $\Sigma_*(\hat\cC)$ have isomorphic homotopy groupoids.
\end{proof}

   In view of this result we will denote the homotopy groupoid of 
   these symmetric simplicial sets by $\pi_1(\cC)$. Since $\pi_1(\cC)$ 
   and $\pi_1(\cC^{\circ})$ can both be calculated using $\Sigma_*(\hat\cC)$, 
   $\pi_1(\cC)$ and $\pi_1(\cC^{\circ})$ are isomorphic despite the 
   the fact that the categories can apparently have very little to do with 
   each other.

\subsection{Simplicial sets of a poset}
\label{Xb}

In the present paper we will work with the simplicial 
set $\tilde \Si_*(K)$ associated with a poset $K$ 
showing that it is equivalent to that used 
in \cite{RR06}. This will allow us to connect 
the cohomology of posets, developed 
in that paper, with bundles over posets.\bigskip

To begin with, we observe that a poset 
is a category with at most one arrow between any two objects. 
This implies that any covariant functor between 
the categories associated with two posets induces a unique 
order preserving map between the posets: the mapping between 
the corresponding set of objects. 
Conversely, any order preserving map between
two posets  induces a functor between 
the corresponding categories.\smallskip

Now, according to the previous observation, 
any $n$--simplex $x$ of the symmetric 
simplicial set $\tilde \Si_*(K)$, defined in the previous section, 
can be equivalently defined as an order preserving map $f:P_n\to K$. 
A detailed description of $\tilde \Si_*(K)$  in these terms 
has been given in \cite{RR06}. Here we just observe that  
a $0$--simplex is just an element of the poset. 
For $n\geq 1$, an $n-$simplex $x$ is formed
by $n+1$  $(n-1)-$simplices $\partial_0x, \ldots,\partial_nx$,
and by a $0$--simplex $|x|$ called the \emph{support}
of $x$ such that $|\partial_0x|, \ldots,|\partial_{n}x|\leq |x|$.
The nerve $\Si_*(K)$ turns out to be a subsimplicial set of $\tilde\Si_*(K)$. 
To see this, it is enough to define 
$f_0(a):= a$ on  $0$--simplices and, inductively,  
$|f_n(x)|:= \partial_{01\cdots (n-1)}x$ and 
$\partial_i f_n(x):= f_{n-1}(\partial_ix)$. 
So we obtain a simplicial map 
$f_*:\Si_*(K)\to \tilde \Si_*(K)$\footnote{In \cite{RR06} 
$\Si_*(K)$ was called the inflationary structure 
of  $\tilde \Si_*(K)$ and denoted by $\Si^{\inf}(K)$. 
This part  of $\tilde \Si_*(K)$ 
encodes the order relation of $K$, explaining the terminology.}.
We sometimes adopt the following 
notation: $(o;a,\tilde a)$ is the
1--simplex of $\tilde\Si_1(K)$ whose support is $o$ and 
whose 0-- and 1--face are, respectively, $a$ and $\tilde a$;
$(a,\tilde a)$ is the 1--simplex of the nerve $\Si_1(K)$
whose 0-- and 1--face are, respectively, 
$a$ and $\tilde a$.\smallskip

In the present paper we will consider pathwise connected posets $K$. 
This amounts to saying that the simplicial set $\tilde\Si_*(K)$ is pathwise 
connected. The homotopy groupoid $\pi_1(K)$  
of $K$ is defined as $\pi_1(\tilde\Si_*(K))$. Correspondingly, when 
$\pi_1(K)$ is trivial we will say that $K$ is \emph{simply connected}.
Now, a poset $K$ is \emph{upward} directed whenever 
for any pair $o,\hat o\in K$ there is $\tilde o$ such that 
$o,\hat o\leq \tilde o$. It is \emph{downward} directed if the dual poset 
$K^\circ$ is upward directed. When $K$ is upward directed, then 
$\tilde\Si_*(K)$ admits a contracting 
homotopy. So in this case $K$ is simply connected.  
However,  since $\pi_1(\tilde\Si_*(K))$ is isomorphic
to  $\pi_1(\tilde\Si_*(K^{\circ}))$, 
$K$ is simply connected whenever $K$ is downward directed too. 
The link between the first homotopy group of a poset and 
the corresponding topological notion 
can be achieved as follows. Let $M$ be an arcwise connected 
manifold, consider a basis for the topology of $M$ whose elements
are connected and simply connected, open  subsets of $M$.
Denote the poset formed by ordering this basis  
under \emph{inclusion} by $K$. 
Then $K$ is pathwise connected and $\pi_1(K)$ turns out to be
isomorphic to the fundamental groupoid $\pi_1(M)$ of $M$ 
\cite[Theorem 2.18]{Ruz05}.

   
\subsection{Posets as topological spaces} 
\label{Xc}    
   Let $K$ be a poset which we will equip with a topology defined by 
   taking $V_a:=\{a'\in K:a'\leq a\}$ as a base of neighbourhoods for 
   the topology. This topology corresponds to the Alexandroff topology 
   on $K^{\circ}$. The reason for choosing this convention is that the 
   map $a\mapsto V_a$ is an order isomorphism so that any poset 
   is a base for a topology ordered under inclusion. We denote $K$ 
   equipped with this topology by $\tau K$. It is easy to verify that a 
   mapping $f$ of posets is order preserving if and only if the corresponding
   mapping $\tau f$ of topological spaces is continuous. Thus the category 
   $\cK$ of posets may be regarded as a full subcategory of the category 
   of topological spaces.\smallskip 

  Clearly if we just know that $K$ is a base for a topology then this 
  cannot yield more information than can be got by supposing that 
  $\tau K$ is the topological space in question\footnote{
  Knowing that the space in question is Hausdorff could yield 
  more information.}.  Therefore it is 
  worth recalling the principal features of the topological space. 
 $\tau K$ is a $T_0$--space but not Hausdorff unless the ordering 
 is trivial. The components coincide with the arcwise connected 
components and the associated Hausdorff space is the space of 
 components with the discrete topology. The open set $V_a$ is 
the smallest open set containing $a$. It has a contracting homotopy 
and is hence arcwise connected and simply connected. It therefore 
follows from \cite{Rob03} that the path connected components of $K$ are in 
$1$--$1$ correspondence with the arcwise connected components 
of $\tau K$ and that $\pi_1(K)$ is isomorphic 
to $\pi_1(\tau K)$. Finally, we will call the open covering $\cV_0$ 
of $K$ defined by $\cV_0:=\{V_a: a\in K\}$ the 
\emph{fundamental covering}. Note that if $\cV$ is any other open 
covering of $K$, then any $V_a$ is contained in some element of $\cV$.
In the following we denote 
$V_{a_1}\cap V_{a_2}\cap \cdots\cap V_{a_n}$ by $V_{a_1a_2\cdots a_n}$.
A function $f$ from a poset $K$ to a set  $X$ 
is \emph{locally constant} whenever $f(o)=f(\tilde o)$ for 
$o\leq \tilde o$, or, equivalently, if $f(\partial_1b)=f(\partial_0b)$ 
for any 1--simplex $b$ of the nerve. Another equivalent description 
is the following: if $X$ is enodwed  with the trivial order, then 
$f:\tau K\to \tau X$ is continuous.

\section{Bundles over posets}
\label{A}

In this section we deal with bundles over posets without 
any particular structure. We discuss morphisms between bundles, 
connections and flat connections. We finally study the local
properties, like local triviality and the existence of local cross
sections. Throughout this paper the poset $K$ is assumed to be pathwise
connected. 
\subsection{Net bundles}
\label{Aa}
We introduce the notion of a net bundle over a poset. As we shall
see, a net bundle is a fibration of 
symmetric simplicial sets associated with  
posets (see Section \ref{Xb}),  equipped 
with a cocycle of the nerve of the poset base,  with values in the fibres.
This cocycle replaces continuity (or differentiability) 
in the theory of bundles over manifolds.
\begin{definition}
\label{Aa:1}
A \textbf{net bundle} over a poset $K$ is formed by 
a set $B$,  a surjective map $\pi:B\to K$, and 
a  collection $J$ of bijective mappings 
\[
J_{b}:\pi^{-1}(\partial_1b)\to
\pi^{-1}(\partial_0b), \qquad b\in \Si_1(K), 
\]
satisfying the following relations: 
\begin{itemize}
\item[(i)]   $J_{\partial_0c} \, J_{\partial_2c} = J_{\partial_1c}$  
             for any $c\in \Si_2(K)$,  
\item[(ii)]  $J_{\si_0a}  = \mathrm{id}_{\pi^{-1}(a)}$, 
             for any $a\in\Si_0(K)$. 
\end{itemize}
The symbol $\cB$ will denote the net bundle whose 
data is $(B, \pi,J,K)$.
\end{definition}
As usual $B$,  $K$ and  $\pi$ are called, respectively, the
\textit{total} 
space,  the \emph{base} space and the 
\emph{projection}. The subset $B_o :=\pi^{-1}(o)\subset B$
is called the \emph{fibre} over $o$. The  collection 
$J$ is called the \emph{net structure} of the bundle. 
Note, in fact that the correspondence 
$K\ni o \rightarrow B_o\subseteq B$ with $J$ is a net: given $o\leq \tilde o$ 
then $\mathrm{ad}_{J_{(\tilde o, o)}}:B_o\to B_{\tilde o}$, 
where $\mathrm{ad}$ denotes the adjoint action, and 
$(\tilde o, o)$ the  1--simplex of the nerve having 0--  and 1--face 
respectively $\tilde o$ and $o$. In the following symbol 
$\hat\cB$  will indicate 
the net bundle $(\hat B,\hat\pi, \hat J,K)$. \smallskip 

Since $K$ is pathwise connected and the net structure 
is a bijection, all the fibres of a net bundle are isomorphic. 
Note, however that we can weaken the above definition assuming the net
structure to be just injective. Clearly, in this case the fibres are, 
in general, not isomorphic. We refer to these  bundles as \emph{quasinet
bundles}. In the present paper we will not deal with quasinet
bundles. However they will play a role in  K-theory \cite{RRV2}. \smallskip

The total space $B$ of a net bundle $\cB$ seems 
to have no structure. But this is not 
the case since there is an associated 
fibration of simplicial sets. First of all, note
that given $\psi,\phi\in B$ and writing 
\[
 \psi\leq_{J} \phi \ \ \iff \ \ \pi(\psi)\leq \pi(\phi) \ \mbox{ and } \ 
 J_{\pi(\phi),\pi(\psi)}\psi = \phi 
\]
yields  an order relation on $B$. So, the total space is a poset too 
with the fibres $B_o$ having the discrete ordering. This, in turns, 
implies that  $\cB$ is a fibration of simplicial sets. Consider  
the symmetric simplicial set $\tilde\Si_*(B)$. Set 
$\pi_0(\phi) :=\pi(\phi)$ for any $\phi\in\tilde\Si_0(B)$, and
by induction,
for $n\geq 1$,  define 
\[
  \pi_n(x)  := (\pi_{0}(|x|), \pi_{n-1}(\partial_0x), \ldots , 
  \pi_{n-1}(\partial_nx)), 
\qquad x\in\tilde\Si_n(B).
\]
Then, we have  a surjective simplicial map 
$\pi_*:\tilde\Si_*(B)\to \tilde\Si_*(K)$.  
This map is symmetric, i.e., $\pi_n\tau_i = \tau_i\pi_n$, and
preserves the nerves, i.e., $\pi_*:\Si_*(B)\to\Si_*(K)$. 
Now, the \emph{fibre} over  a $0$--simplex $a$ is the simplicial set 
$\pi^{-1}_*(a)$ defined by $\pi^{-1}_0 (a)$ and 
$\pi^{-1}_{k+1} (\si_{k\cdots 10}a)$ for $k\geq 0$.
So we have a fibration of symmetric simplicial sets whose 0--fibres are,
according to our definition, isomorphic. We observe that this
is not a Kan fibration since the simplicial sets involved, in general, 
do not fulfill
the extension condition \cite{RR06}.\smallskip

We now provide a first example  of a
net bundle, the \emph{product net bundle}, 
introducing some notation useful in what 
follows. Nontrivial examples, 
will be given in Section \ref{Da}.
Let $K$ be a poset, and let $X$ be a space. 
Consider  the Cartesian product  $K\times X$ and  define 
$\pi(o, x) := o$.  
Clearly, $\pi: K\times X\rightarrow K$ is a surjective map, and 
$\pi^{-1}(o)\simeq o\times X$. Let 
\[
\jmath_{b}(\partial_1b,x) := (\partial_0b,x), \qquad (b,x)\in 
\Si_1(K)\times X. 
\]
Clearly, $\jmath_{b}:\pi^{-1}(\partial_1b)\to\pi^{-1}(\partial_0b)$
is a bijective map.\smallskip

We now introduce morphisms between net bundles. 
Since in the present paper we will not need to compare net bundles 
over different posets, we will consider only   morphisms 
leaving the base space invariant. The general definition 
can be easily obtained mimicking that
for bundles over manifolds. 
\begin{definition}
\label{Aa:4}
Let $\cB$ and 
$\hat\cB$ be  net bundles 
over the poset $K$.  A \textbf{net bundle morphism} $f$ from 
$\cB$  into  $\hat\cB$ is a mapping of total spaces $f:B\to \hat B$ 
preserving the fibres  and commuting with the net structure,
namely 
\begin{itemize} 
\item[(i)] $\hat \pi\, f = \pi$; 
\item[(ii)] $\hat J\, f = f\, J$.
\end{itemize}
The bundles $\cB$ and $\hat \cB$ are said to be \textbf{isomorphic} whenever 
$f:B\rightarrow \hat B$ is a bijection. A bundle isomorphic to 
a product bundle is said to be \textbf{trivial}.
\end{definition}
We conclude this section by defining the restriction of a net bundle 
used later to define  local triviality. 
Let $\cB$ be a net bundle over $K$. Given
an open and pathwise connected subset $U$ of $K$, define 
$  B|_{U}   :=   \pi^{-1}(U)$, and 
$\pi|_U(\phi)  :=  \pi(\phi)$ for $\phi\in \cB|_{U}$.
Moreover   set ${J|_U}_{b} := J_{b}$ for any 
$b\in \Si_1(U)$. Then, $\pi_U:B|_{U}\to U$ is a surjective map
and  $J|_U$ a net structure. We call this bundle 
the \emph{restriction} of $\cB$ to $U$ and denote it by 
$\cB|_U$. 
\subsection{Connections}
\label{Ab}
We introduce the notion of a connection 
on a  net bundle, and related notions like 
parallel transport and flat connections.\bigskip 

Since the net structure $J$ of a net bundle $\cB$ over $K$ 
is a bijection between the fibres it admits an extension 
from the nerve $\Si_1(K)$ to the simplicial set $\tilde\Si_1(K)$.
In fact, let 
\begin{equation}
\label{Ab:1}
Z(b) := J^{-1}_{(|b|, \partial_0b)}\,
           J_{(|b|,\partial_1b)}, \qquad b\in\tilde\Sigma_1(K),
\end{equation}
where $(|b|,\partial_ib)$ is the 1--simplex of the nerve with 
$0$--face $|b|$ and 1--face 
$\partial_ib$, for $i=0,1$. This is well posed since
the support of a 1--simplex of $\tilde\Si_1(K)$
is greater than its faces (see Section \ref{Xb}). 
So we have a bijection $Z(b): B_{\partial_1b}\to B_{\partial_0b}$ 
for any $b\in\tilde\Si_1(K)$. Using the defining properties 
of the net structure we have  
\begin{align*}
Z(\partial_0c)\, Z(\partial_2c) &
   = J^{-1}_{(|\partial_0c|, \partial_{00}c)}\, 
           J_{(|\partial_0c|,\partial_{10}c)}\,
    J^{-1}_{(|\partial_2c|, \partial_{02}c)}\,
           J_{(|\partial_2c|,\partial_{12}c)}  \\
 & =  J^{-1}_{(|c|, \partial_{00}c)}\,
           J_{(|c|,\partial_{10}c)}\,
    J^{-1}_{(|c|, \partial_{02}c)}\,
           J_{(|c|,\partial_{12}c)}  \\
 & =  J^{-1}_{(|c|, \partial_{01}c)}\,
           J_{(|c|,\partial_{10}c)}\,
    J^{-1}_{(|c|, \partial_{10}c)}\,
           J_{(|c|,\partial_{11}c)}  \\
 & =  J^{-1}_{(|c|, \partial_{01}c)}\,
           J_{(|c|,\partial_{11}c)}  \\
 & =  Z(\partial_1c),
\end{align*}
for any $c\in\tilde\Si_2(K)$,
where the commutation relations of the faces,
$\partial_{ij}= \partial_{j-1,i}$, if 
$i<j$, have been used (recall that $\partial_{ij}$ stands for 
$\partial_i\partial_j$). \smallskip

After this observation we are in a position to define connections. 
\begin{definition}
\label{Ab:2}
A \textbf{connection} on a net bundle $\cB$
is a field $U$ associating 
a bijective mapping  $U(b):B_{\partial_1b}\to B_{\partial_0b}$ 
to any $1$--simplex $b$ of $\tilde\Si_1(K)$, and  
such that  
\begin{itemize}
\item[(i)]  $U(\overline{b})   =   U(b)^{-1}$,  for  $b\in\tilde\Si_1(K)$; 
\item[(ii)]  $U(b)  =  J_{\partial_1b,\partial_0b}$ for any 
        $b\in \Si_1(K)$.
\end{itemize}
A connection $U$ is said to be \textbf{flat} whenever
\[ 
U(\partial_0c) U(\partial_2c) =  U(\partial_1c), \qquad
c\in\tilde\Si_2(K).
\]
We denote the set of connections of a net bundle $\cB$ 
by $\sU(K,\cB)$.
\end{definition}
According to this definition a connection is any extension 
of the net structure to the simplicial set $\tilde \Si_1(K)$; the extension 
$Z$ defined by (\ref{Ab:4}) is a flat connection 
of the net bundle. This connection is characteristic of 
the net bundle, as the following proposition shows.
\begin{proposition}
\label{Ab:3}
$Z$ is the unique  flat connection of $\cB$.
\end{proposition}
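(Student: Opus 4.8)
The plan is to show that \emph{any} flat connection $U$ must coincide with $Z$, by evaluating it on an arbitrary $1$--simplex. Fix $b\in\tilde\Si_1(K)$ and write $o:=|b|$, $a_0:=\partial_0b$, $a_1:=\partial_1b$, so that $a_0,a_1\leq o$. The governing idea is the \emph{support decomposition}: the two relations $a_1\leq o$ and $a_0\leq o$ are nerve $1$--simplices, and $b$ should be recovered from them by first going up from $a_1$ to its support $o$ and then down from $o$ to $a_0$. I will make this precise by exhibiting a degenerate $2$--simplex whose three faces are $b$ and these two nerve simplices, at which point flatness leaves no freedom.

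Concretely, I would take the $2$--simplex $c\in\tilde\Si_2(K)$ given by the order preserving map $P_2\to K$ sending the three vertices to $a_1,o,a_0$ and every larger subset of $\{0,1,2\}$ (in particular the support $|c|$) to $o$; since $a_0,a_1\leq o$ this is genuinely order preserving. Computing its faces one finds $\partial_1c=b$, while $\partial_2c=(o;o,a_1)$ and $\partial_0c=(o;a_0,o)$. The first of these is the nerve $1$--simplex with $0$--face $o$ and $1$--face $a_1$, and the second is the reverse of the nerve $1$--simplex with $0$--face $o$ and $1$--face $a_0$. This is exactly where the barycentric structure of $\tilde\Si_*(K)$ does the work, and checking that the faces of this filler come out with the correct orientation is the step that needs care; it is pure bookkeeping, but it is the only real obstacle.

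With $c$ in hand the conclusion is forced. Flatness of $U$ gives $U(b)=U(\partial_0c)\,U(\partial_2c)$. By axiom (ii) of Definition~\ref{Ab:2}, $U$ restricts to $J$ on nerve $1$--simplices, so $U(\partial_2c)=J_{(o,a_1)}$ and $U\big((o;o,a_0)\big)=J_{(o,a_0)}$; then axiom (i) yields $U(\partial_0c)=U\big((o;o,a_0)\big)^{-1}=J^{-1}_{(o,a_0)}$. Hence
\[
U(b)=J^{-1}_{(o,a_0)}\,J_{(o,a_1)}=Z(b),
\]
which is precisely the defining formula (\ref{Ab:1}). As $b$ was arbitrary, $U=Z$. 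For completeness I would also record that $Z$ is itself a flat connection: the $2$--cocycle identity was verified before the statement, axiom (ii) holds because $J_{\si_0 o}=\mathrm{id}$ collapses $Z$ on a nerve simplex to the corresponding $J$, and axiom (i) is immediate from the definition of $Z$ on the reverse $\overline b$. This establishes existence and uniqueness together.
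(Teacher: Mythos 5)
Your proof is correct and takes essentially the same route as the paper's: you construct exactly the same $2$--simplex $c$ (with $|c|=|b|$, $\partial_1c=b$, $\partial_2c=(|b|;|b|,\partial_1b)$, $\partial_0c=(|b|;\partial_0b,|b|)$) and then apply flatness together with axioms (i) and (ii) of Definition~\ref{Ab:2} to force $U(b)=J^{-1}_{(|b|,\partial_0b)}J_{(|b|,\partial_1b)}=Z(b)$. The only differences are cosmetic: you realize $c$ explicitly as an order-preserving map and verify its faces, and you also record the existence of $Z$ as a flat connection, which the paper had already established just before the proposition.
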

\begin{proof}
Given a 1--simplex $b\in\tilde\Si_1(K)$ consider the 2--simplex 
$c\in\tilde\Si_2(K)$ defined by 
\[
 |c|=|b|, \ \ \partial_2c=(|b|;|b|,\partial_1b), \ \  
              \partial_0c=(|b|;\partial_0b, |b|), \ \
              \partial_1c=b.
\]
Observe that $\partial_2c$ and the reverse of $\partial_0c$ are
1--simplices of the nerve. If $U$ is a flat connection, then
$U(b) = U(\partial_0c)\, U(\partial_2c)$
$= J^{-1}_{(|b|,\partial_0b)}\, J_{(|b|,\partial_1b)}$ $ = Z(b)$.
\end{proof}
This result is one of the main differences from 
the theory of bundles over manifolds. As 
a direct consequence, we shall see in Section \ref{Da} that  
principal net bundles over a simply connected poset 
are trivial. Concerning non-flat connections, we point out 
that, except in trivial situations, the set of non-flat connections 
of a principal net bundle is never 
empty (see Section \ref{Da}).\smallskip   

Finally given  a path  $p$ 
of the form $p=b_n*\cdots *b_1$, 
the \emph{parallel transport} along $p$, induced by a
connection $U$, is  the mapping 
$U(p):B_{\partial_1p}\rightarrow B_{\partial_0p}$ 
defined by 
\begin{equation}
\label{Ab:4}
 U(p)  := U(b_n)\, \cdots \, U(b_2) \, U(b_1) \ .
\end{equation}
The defining properties of a connection  imply 
$U(\overline{p})=U(p)^{-1}$ for any 
path $p$, and $U(\si_0a)=\mathrm{id}_{B_a}$ for any 0--simplex $a$.


\subsection{Local triviality}
\label{Ac}

We aim to show that any net bundle $\cB$ 
is \emph{locally trivial}. Thus there is
a set $X$ and an open covering $\cV$ of the poset $K$ such that,
for any $X\in\cV$, the restriction  
$\cB|_V$ is equivalent to the product net bundle 
$V\times X$. In particular we will show that this holds 
for the fundamental covering  $\cV_0$.
This suffices for our purposes. In fact if a net bundle can be 
trivialized on a covering $\cV$, then it can be 
trivialized on the fundamental covering $\cV_0$, too
because $\cV_0\subseteq \cV$.
\begin{proposition}
\label{Ac:1}
Any net bundle $\cB$ over $K$ can be trivialized 
on the fundamental covering $\cV_0$ of $K$: 
thus there is a set $X$ such that the restriction 
$\cB|_{V_a}$ is isomorphic to the product bundle $V_a\times
X$, for any $a\in K$.
\end{proposition}
\begin{proof}
Fix $o\in K$ and define 
$X := \pi^{-1}(o)$. Using the flat connection 
$Z$ of $\cB$ and pathwise connectedness, we can find a bijection 
$F_{a}: X\rightarrow \pi^{-1}(a)$ for any $0$--simplex $a$
(clearly  $F_{a}$ is not uniquely determined). 
Now recalling the definitions of restriction and product bundle 
we must show that there is a family of bijective mappings 
$\theta_a: V_a\times X\to \pi^{-1}(V_a)$, with $a\in K$,  
such that $\pi\,\theta_a=pr_1$ and 
$J_b\,\theta_a = \theta_a\,j_b$ for any $b\in\Si_1(V_a)$. 
To this end define 
\[
 \theta_a(o,v) :=  J_{(o,a)} F_a(v), \qquad (o,v)\in V_a\times X.
\]
Clearly $\theta_a: V_a\times X\to \pi^{-1}(V_a)$, 
being the composition of bijective maps, is bijective, 
and 
$\pi \theta_a = pr_1$. Moreover, 
If $b\in \Si_1(V_a)$, then 
\[
 J_{b}\, \theta_a (\partial_1b,v)  =  
 J_{b}\,  J_{(\partial_1b,a)}\,  F_a (v)
 =  J_{(\partial_0b,a)}\,  F_a (v) =  \theta_a\,  \jmath_{b} (\partial_1b,v),
\]
and this completes the proof. 
\end{proof}
The set  $X$ is called the \textit{standard fibre}
of $\cB$. A family of mappings $\theta:=\{\theta_a\}$, with $a\in K$, 
trivializing the net bundle $\cB$ on the fundamental covering 
will be called a \emph{local trivialization} of $\cB$.\smallskip

We now deal with local sections of net bundles.  
\begin{definition}
\label{Ac:2}
A \textbf{local section} of a net bundle $\cB$  is a map 
$\si: V\rightarrow B$, where $V$ is an open of $K$, 
such that $\pi\,\si =\mathrm{id}_V$, and 
\[
 J_{b}(\si(\partial_1b)) = \si(\partial_0b), \qquad b\in \Si_1(V).
\]
If $V=K$, then $\si$ is said to be a \textbf{global} section.
\end{definition}
Consider a local section $\si:V\rightarrow B$. Let  $a$ be 0--simplex 
with $a\in V$, and recall that $V_a\subseteq V$. Given a local
trivialization $\theta$, define 
\begin{equation}
\label{Ac:3}
 \theta^{-1}_a(\si(o))  := (o,s_a(o)),  \qquad o\in V_a.
\end{equation}
We call $s_a$ a \emph{local representative} 
of $\si$. 
An important property of net bundles,  is that 
cross sections are \emph{locally constant} (see Subsection \ref{Xc}). 
In fact,  $(o, s_a(o))  = \theta^{-1}_a \si(o)$ 
$= \theta^{-1}_a\,  J_{(o,a)}\si(a)=$ 
$ \jmath_{o,a}\,  \theta^{-1}_a \si(a)$ $= (o, s_a(a))$, for any 
$o\in V_a$.
A second property is that any net bundle has local cross 
sections. In fact, given $a\in\tilde\Sigma_0(K)$, pick 
$\phi\in\pi^{-1}(a)$ and define 
$\si_a(o) := J_{(o,a)}(\phi)$ for $o\in V_a$. 
One can easily sees that $\si_a:V_a\to B$ is a local cross section. 
%
%
\section{Principal bundles over posets} 
\label{B}

We now introduce the notion of a principal net bundle over a poset. 
This is a net bundle with a suitable action of a group. 
Many of the previous notions, like
morphisms and connections,  generalize straightforwardly by  
requiring equivariance.  We study local trivializations, 
introduce transition functions and  point out their main 
feature: they are locally constant.\bigskip

\begin{definition}
\label{B:1}
A \textbf{principal net bundle}  $\cP$ over a poset $K$  
is a net bundle $(P,\pi,J,K)$ with a  group $G$ acting 
freely on the total  space $P$ on the right.   
The action $R$  preserves the fibres,  
$\pi\,  R_g=\pi$ for any $g\in G$, 
is transitive on the fibres, and is compatible with  
the net structure, namely  $J \,  R = R\, J$.
We call $G$  the \textbf{structure group} of $\cP$.  
We denote  the set of  principal net 
bundles having structure group $G$ by $\sP(K,G)$.
\end{definition}
In the sequel we adopt the following notation for the action 
of the structure group: $R_g(\psi):= \psi\cdot g$, with 
$\psi\in P$ and $g\in G$.\smallskip

It is clear from this definition that the fibres of a principal net bundle
are all isomorphic to the structure group. Furthermore, the relevant 
topology of the structure  is the discrete one, since 
the order induced by the net structure on the total space is 
trivial when restricted to the fibres (see Section \ref{Aa}).
Hence the topology induced on the fibres  is discrete. \smallskip

As an example, 
given a group $G$, consider the product net bundle $K\times G$ 
introduced in the previous section. Define 
\begin{equation}
\label{B:1a}
  r_h(o,g) := (o,gh), \qquad  o\in K, \  g,h\in G.
\end{equation}
Clearly we have a principal net bundle that we call the 
\emph{product principal net bundle}.\smallskip

We now introduce principal morphisms and the category associated 
with principal net bundles.
\begin{definition}
\label{B:2}
Consider two principal net  bundles 
$\cP, \hat \cP\in\sP(K,G)$. A \textbf{morphism} 
$f$ from $\hat\cP$ into $\cP$ 
is an equivariant net bundle morphism $f:\hat P\to P$, namely  
$R \,  f   =   f\,  \hat R$. 
We denote the set of morphisms from 
$\hat \cP$  to $\cP$  by $(\hat\cP,\cP)$.
\end{definition}
The definition of morphisms involves only 
principal net bundles over the same poset $K$ and having the same
structure group. In Section \ref{Db}, we will see how to connect 
principal net bundles having different structure groups. Now, 
given $\cP,\hat\cP, \tilde\cP\in P(K,G)$, let 
$f_1\in (\cP,\hat\cP)$ and $f_2\in (\hat\cP,\tilde\cP)$. Define 
\begin{equation}
\label{B:3}
 (f_2\, f_1) (\phi)  := f_2(f_1(\phi)),\qquad \phi\in P.
\end{equation}
It is easily seen that $f_2\, f_1\in (\cP,\tilde\cP)$. 
The composition law (\ref{B:8}) makes $\sP(K,G)$ into a category,
the category of \emph{principal net bundles with structure group
 $G$}. We denote this
by the same symbol as used to denote the set of objects.
The identity $1_{\cP}$ of $(\cP,\cP)$ is the identity 
automorphism of $\cP$. Since we are considering morphisms between 
principal bundles with
the same structure group, it is easily seen,  that any  morphism 
$f\in (\hat\cP,\cP)$ is indeed an \emph{isomorphism}, namely  
there exists a morphism $f^{-1}\in (\hat\cP,\cP)$
such that $ f\,  f^{-1} = 1_{\cP}$ and 
$f^{-1}\,  f = 1_{\hat\cP}$.
On these grounds,  given 
$\hat \cP, \cP\in\sP(K,G)$, and writing 
\begin{equation}
\label{B:4}
\hat\cP\cong\cP \ \iff \ (\hat\cP,\cP)\ne\emptyset.
\end{equation}
we endow $\sP(K,G)$ of an equivalence relation $\cong$ and we shall say 
that $\cP_1$ and $\cP$ \emph{are equivalent}. A principal 
net bundle $\cP\in\sP(K,G)$ is said to be \emph{trivial} 
if it is equivalent to the principal product  net bundle $K\times G$.\bigskip

Following the scheme of the previous section, 
we now deal with  the notion of a connection 
on a principal net bundle, and related notions like 
parallel transport, holonomy group, 
and flat connection. In particular we analyze this property 
from a global point of view, i.e. without using local trivializations. 
\begin{definition}
\label{B:5}
A \textbf{connection} on a principal net bundle $\cP$
is a net bundle connection $U$ of $\cP$ which is 
equivariant, namely 
\[
U(b) \, R  =   R \, U(b), \qquad  b\in\tilde\Si_1(K).
\]
We denote the set of connections of a principal net bundle $\cP$ 
by $\sU(K,\cP)$.
\end{definition}
The notions of  parallel transport along paths and 
flatness  for generic net bundles 
admit  a straightforward generalization to principal 
net bundles. The only important point 
to note is that if $U$ is a connection of $\cP$, 
and $p$ is a path, then the parallel 
transport $U(p)$ is an equivariant map 
from $P_{\partial_1p}$ to $P_{\partial_0p}$.
The \emph{holonomy and the restricted 
holonomy} group of the connection $U$  with respect to the base point
$\psi\in P$ are defined by  
\begin{equation}
\label{B:6}
\begin{array}{c}
H_U(\psi) := \{ g\in G \ | \ \psi\cdot g =  U(p)\psi, \ \
\partial_0p=\pi(\psi)=\partial_1p\} ; \\[3pt]
H^0_U(\psi) := \{ g\in G \ | \ \psi\cdot g =  U(p)\psi, \ \
\partial_0p=\pi(\psi)=\partial_1p, \ \ p\sim \si_0a\} . 
\end{array}
\end{equation}
One  sees that both $H_U(\psi)$ and $H^0_U(\psi)$
are indeed subgroups of $G$ and that $H^0_U(\psi)$ is a normal subgroup 
of $H_U(\psi)$.  This  will be shown 
in Section \ref{Db} where we will deal with the cohomology of 
principal of net bundles and relate holonomy  with  reduction theory. 
We finally observe that a principal net bundle has a unique 
flat connection $Z$ defined by 
equation (\ref{Ab:1}).\bigskip

Let  $U,\hat U$ be a pair of connections of the principal net bundles 
$\cP,\hat\cP\in \sP(K,G)$ respectively. The set $(\hat U,U)$ of 
the \textbf{morphisms} from $\hat U$ to $U$ is the subset 
of the morphisms $f\in(\hat \cP,\cP)$ such that 
$f\, \hat U = U\, f$. It is clearly an equivalence relation
as any element of $(\hat\cP,\cP)$ is invertible. 
It is worth observing that, given a principal net bundle 
$\cP$, then $(Z,Z) = (\cP,\cP)$  
($Z$ is the flat connection on $\cP$). \emph{The category of connections of
principal net bundles over $K$ with structure group $G$,} 
is the category whose objects are connections on principal net bundles 
of $\sP(K,G)$ and whose set of arrows are the corresponding
morphisms. We denote this category by $\sU(K,G)$. 
Furthermore, given $\cP\in\sP(K,G)$ 
we call the \emph{category of connections on $\cP$}, the full
subcategory of $\sP(K,G)$ whose set of objects is $\sU(K,\cP)$. 
We denote this category by $\sU(K,\cP)$ 
as for  the corresponding set of objects. 
\begin{lemma}
\label{B:7}
The category $\sP(K,G)$ is isomorphic 
to the full subcategory $\mathscr{U}_f(K,G)$ 
of  $\mathscr{U}(K,G)$ whose objects are flat connections.
\end{lemma}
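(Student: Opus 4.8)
The plan is to exhibit an explicit isomorphism of categories
\[
F:\sP(K,G)\longrightarrow \mathscr{U}_f(K,G)
\]
which sends each principal net bundle to its distinguished flat connection and which acts as the identity on arrows. On objects I would set $F(\cP):=Z$, where $Z$ is the connection of $\cP$ defined by (\ref{Ab:1}); by Proposition \ref{Ab:3} this is the \emph{unique} flat connection of $\cP$, so the assignment is unambiguous. On a morphism $f\in(\hat\cP,\cP)$ I would simply put $F(f):=f$, once it is checked that $f$ is automatically a morphism of the associated flat connections.

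First I would verify that $F$ is a bijection on objects. Injectivity is clear, since the flat connection $Z$ restricts on nerve $1$--simplices to the net structure $J$ and is a field of bijections between the fibres of $\cP$, so it carries all the data of the underlying bundle. For surjectivity, any object of $\mathscr{U}_f(K,G)$ is by definition a flat connection on some $\cP\in\sP(K,G)$, and by the uniqueness in Proposition \ref{Ab:3} it must coincide with the distinguished connection $Z$ of $\cP$; hence it lies in the image of $F$.

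The heart of the argument is to show that the hom--sets coincide, $(\hat Z,Z)=(\hat\cP,\cP)$ for all $\hat\cP,\cP\in\sP(K,G)$. The inclusion $(\hat Z,Z)\subseteq(\hat\cP,\cP)$ holds by definition, a morphism of connections being in particular a morphism of the underlying bundles. For the reverse inclusion I would take any $f\in(\hat\cP,\cP)$ and check that it intertwines the flat connections. Using that $f$ commutes with the net structures, $J_b\,f=f\,\hat J_b$ for $b\in\Si_1(K)$ (whence also $J_b^{-1}\,f=f\,\hat J_b^{-1}$), the defining formula (\ref{Ab:1}) gives, for every $b\in\tilde\Si_1(K)$,
\begin{align*}
Z(b)\,f
 &= J^{-1}_{(|b|,\partial_0b)}\,J_{(|b|,\partial_1b)}\,f
  = J^{-1}_{(|b|,\partial_0b)}\,f\,\hat J_{(|b|,\partial_1b)} \\
 &= f\,\hat J^{-1}_{(|b|,\partial_0b)}\,\hat J_{(|b|,\partial_1b)}
  = f\,\hat Z(b),
\end{align*}
so $f\,\hat Z=Z\,f$ and $f\in(\hat Z,Z)$. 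This is precisely the generalization of the observation $(Z,Z)=(\cP,\cP)$ recorded before the lemma, and I expect this short computation to be the only genuinely substantive point; all other verifications are bookkeeping.

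Finally, since $F$ is the identity on arrows and composition in both $\sP(K,G)$ and $\mathscr{U}_f(K,G)$ is composition of the underlying maps of total spaces, $F$ preserves identities and composition and is therefore a functor. Combined with the bijection on objects and the equality of hom--sets established above, this shows that $F$ is an isomorphism of categories, proving the lemma.
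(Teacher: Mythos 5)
Your proof is correct and follows essentially the same route as the paper: define $F(\cP):=Z$ and $F(f):=f$, then check $F$ is bijective on objects and that the hom-sets $(\hat\cP,\cP)$ and $(\hat Z,Z)$ coincide. In fact you supply slightly more detail than the paper, which merely asserts the hom-set equality, whereas you verify it with the intertwining computation $Z(b)\,f=f\,\hat Z(b)$ derived from $J\,f=f\,\hat J$ and formula (\ref{Ab:1}).
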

\begin{proof}
Given $\cP\in\sP(K,G)$, define 
$F(\cP) := Z$ where $Z$ is the unique flat connection of $\cP$.
For any $f\in(\cP,\cP_1)$ define $F(f)=f$. Clearly 
$F:\sP(K,G)\rightarrow \mathscr{U}_f(K,G)$ is 
a covariant functor. It is full because the set of morphisms 
$(\cP,\cP_1)$ equals the set of the morphisms of $(Z,Z_1)$. 
It is an isomorphism because for any $Z$, there obviously exists 
a principal net bundle $\cP$, the bundle where $Z$ is defined, 
such that $F(\cP)=Z$. 
\end{proof}

We have seen that any  net bundle is locally trivial 
(Proposition \ref{Ac:1}). 
We now   show that principal net 
bundles are locally trivial too. This will allow us to 
investigate   the local behaviour of  
trivialization maps, cross sections and morphisms. In particular we will
point out the main feature of these local 
notions: all of them are, in a suitable sense, locally constant.
\begin{proposition}
\label{B:8}
Any principal net bundle $\cP$ admits 
a local trivialization on the fundamental covering $\cV_0$.
\end{proposition}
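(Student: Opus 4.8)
The plan is to refine the set-theoretic trivialization produced in Proposition \ref{Ac:1} into a $G$--equivariant one, so that each local chart becomes an isomorphism of \emph{principal} net bundles $\cP|_{V_a}\cong V_a\times G$. The whole argument rests on a single observation: the flat connection $Z$ of a principal net bundle is automatically equivariant. Indeed, since the net structure satisfies $J\,R=R\,J$, each $J_b$ commutes with the action $R$, and therefore so does $Z(b)=J^{-1}_{(|b|,\partial_0b)}\,J_{(|b|,\partial_1b)}$ for every $b\in\tilde\Si_1(K)$; consequently the parallel transport $Z(p)$ along any path $p$ satisfies $Z(p)(\psi\cdot g)=Z(p)(\psi)\cdot g$.

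First I would fix $o\in K$ and take the standard fibre to be $X:=\pi^{-1}(o)$. Because $G$ acts freely and transitively on $X$, choosing a base point $\psi_o\in X$ yields a bijection $G\ni g\mapsto \psi_o\cdot g\in X$. Next, using pathwise connectedness, for each $0$--simplex $a$ I choose a path $p_a$ from $o$ to $a$ (with $p_o:=\si_0o$) and set $F_a:=Z(p_a)\colon X\to\pi^{-1}(a)$, which by the observation above is an equivariant bijection. Exactly as in Proposition \ref{Ac:1}, I then put $\theta_a(o',v):=J_{(o',a)}F_a(v)$ for $(o',v)\in V_a\times X$; this is bijective, satisfies $\pi\,\theta_a=pr_1$ and intertwines $J$ with the product net structure. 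To exhibit it as a principal trivialization I transport it to $V_a\times G$ by setting $\phi_a:=F_a(\psi_o)\in\pi^{-1}(a)$ and defining $\tilde\theta_a(o',g):=J_{(o',a)}(\phi_a)\cdot g$. Using equivariance of $F_a$ and of $J$ one checks $\tilde\theta_a(o',g)=\theta_a(o',\psi_o\cdot g)$, so that $\tilde\theta_a\colon V_a\times G\to\pi^{-1}(V_a)$ inherits bijectivity and net-compatibility from $\theta_a$.

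It then remains only to verify equivariance with respect to the product action $r_h(o',g)=(o',gh)$, and here the computation is immediate: $\tilde\theta_a(o',gh)=J_{(o',a)}(\phi_a)\cdot(gh)=\bigl(J_{(o',a)}(\phi_a)\cdot g\bigr)\cdot h=\tilde\theta_a(o',g)\cdot h$, i.e.\ $\tilde\theta_a\circ r_h=R_h\circ\tilde\theta_a$. Thus each $\tilde\theta_a$ is an isomorphism of principal net bundles and $\{\tilde\theta_a\}_{a\in K}$ is a local trivialization on $\cV_0$. The only point requiring genuine care --- and the one I would single out as the crux --- is the equivariance of $Z$; once that is in hand, everything reduces to Proposition \ref{Ac:1} together with the trivial action-compatibility check, since $V_a$ is pathwise (indeed simply) connected so that the restriction $\cP|_{V_a}$ is well defined.
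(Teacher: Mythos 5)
Your proof is correct, and the chart you end up with is literally the one in the paper: $\theta_a(o,g):=J_{(o,a)}(\phi_a)\cdot g$ for a chosen point $\phi_a$ in the fibre $P_a$. The difference lies entirely in how $\phi_a$ is produced and how bijectivity is verified. The paper's proof never mentions the flat connection $Z$: it simply picks $\phi_a\in P_a$ arbitrarily for each $0$--simplex $a$, then gets injectivity of $\theta_a$ from freeness of the action, surjectivity from transitivity on the fibres, net-compatibility from the cocycle identity for $J$ together with $J\,R=R\,J$, and equivariance from associativity of the right action --- exactly your final display. Your detour --- fixing a base point $\psi_o$, choosing paths $p_a$, observing that $Z$ is equivariant so that $F_a=Z(p_a)$ is an equivariant bijection, and then factoring $\tilde\theta_a$ through the chart of Proposition \ref{Ac:1} via $\tilde\theta_a(o',g)=\theta_a(o',\psi_o\cdot g)$ --- is sound, but it is scaffolding the statement does not require: once the standard fibre is taken to be $G$ itself (through $g\mapsto\psi_o\cdot g$, using that the action is free and transitive on fibres by Definition \ref{B:1}), any family of points $\phi_a\in P_a$ works, whether or not it is coherent under parallel transport. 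In particular, the step you single out as the crux --- equivariance of $Z$ --- is dispensable; what actually carries the proposition is freeness and fibrewise transitivity of the action together with its compatibility with $J$. What your route buys is the explicit observation that the set-theoretic trivialization of Proposition \ref{Ac:1} can itself be upgraded to a principal one (same $F_a$, same chart); what the paper's route buys is brevity and independence from any choice of paths.
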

\begin{proof}
The proof is slightly different from the proof of Proposition \ref{Ac:1},
because equivariance is required. 
For any  $0$--simplex $a$,  choose an element 
$\phi_a\in P_a$, and define 
\begin{equation}
\label{B:9}
 \theta_a(o,g)  :=  J_{(o,a)}(\phi_a)\cdot g,  
  \qquad (o,g)\in V_a\times G.
\end{equation}
As the action of the group is free $\theta_a:V_a\times G\rightarrow 
\pi^{-1}(V_a)$ is injective. 
Moreover, given  $\phi\in \pi^{-1}(V_a)$ observe that 
$J_{(\pi(\phi),a)}(\phi_a)$ and $\phi$ belong to the same fibre.  
Since the action of $G$ is transitive on the fibres, 
there exists $g_a(\phi)\in G$
such that $\phi= J_{(\pi(\phi),a)}(\phi_a)\cdot g_a(\phi)$. 
Then $\theta_a(\pi(\phi),g_a(\phi))= 
J_{(\pi(\phi),a)}(\phi_a)\cdot g_a(\phi)=\phi$. This proves that 
$\theta_a$ is bijective. Now, given $b\in \Si_1(V_a)$, we have  
\[
 J_{b}\, \theta_a\, (\partial_1b,g)  = 
  J_{b}\,J_{\partial_1b,a}(\phi_a) \cdot g 
   =  J_{\partial_0b,a}(\phi_a)\cdot g 
    =  \theta_a(\partial_0b,g)
   = \theta_a\, \jmath_{b}\,(\partial_1b,g).
\]
Finally, it is clear that 
$R\,\theta_a = \theta_a\, r$. 
\end{proof}
As for  net bundles,   a \emph{local trivialization}
of a principal net bundle $\cP$ means  a family of mappings 
$\theta:= \{\theta_a\}$, $a\in K$, trivializing $\cP$ on the 
fundamental covering.\smallskip

Consider a local trivialization $\theta$ of a principal net bundle
$\cP$. Given  a pair of 0--simplices $a,\tilde a$, define 
\begin{equation}
\label{B:10}
\theta^{-1}_{\tilde a} \, \theta_{a} (o, g)  :=  
(o, z_{\tilde aa}(o)g), \qquad o\in V_{\tilde aa}
\end{equation}
where $z_{\tilde aa}(o)\in G$. This definition is well posed 
because of the equivariance of 
$\theta_a$. So we have a family $\{z_{\tilde aa}\}$  
of functions $z_{\tilde aa}:V_{\tilde aa}\rightarrow G$ 
associated with the local trivialization $\theta$. 
\begin{lemma}
\label{B:11}
Under the above assumptions and notation, 
$z_{a\tilde a}:V_a\to G$  are locally constant maps 
satisfying the cocycle identity 
\[
 z_{\hat a\tilde a}(o)\,  z_{\tilde aa}(o)=   z_{\hat aa}(o), \qquad
 o\in V_{a\tilde a\hat a}.
\]
\end{lemma}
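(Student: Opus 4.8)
The plan is to derive both assertions directly from the structural identities satisfied by the trivializing maps of Proposition \ref{B:8}: that $\pi\,\theta_a = pr_1$, the equivariance $R\,\theta_a = \theta_a\,r$, and the compatibility $J_b\,\theta_a = \theta_a\,\jmath_b$ valid for every $b\in\Si_1(V_a)$. Throughout I would write $\Phi_{\tilde aa} := \theta^{-1}_{\tilde a}\,\theta_a$, a bijection of $V_{\tilde aa}\times G$ onto itself. By $\pi\,\theta_a = pr_1$ it fixes the first coordinate, and by equivariance its second coordinate is left multiplication by an element of $G$, so that $\Phi_{\tilde aa}(o,g) = (o,\,z_{\tilde aa}(o)\,g)$ exactly as in (\ref{B:10}); this is the content of the remark that (\ref{B:10}) is well posed, and it is all I need about the definition of $z_{\tilde aa}:V_{\tilde aa}\to G$.

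First I would prove local constancy. Fix a $1$--simplex $b\in\Si_1(V_{\tilde aa})$; then $b$ lies in both $\Si_1(V_a)$ and $\Si_1(V_{\tilde a})$, so the compatibility identity of Proposition \ref{B:8} applies simultaneously to $\theta_a$ and $\theta_{\tilde a}$, giving $J_b\,\theta_a = \theta_a\,\jmath_b$ and $J_b\,\theta_{\tilde a} = \theta_{\tilde a}\,\jmath_b$. Rewriting the second as $\theta^{-1}_{\tilde a}\,J_b = \jmath_b\,\theta^{-1}_{\tilde a}$ on the relevant fibres, I would compute
\[
\Phi_{\tilde aa}\,\jmath_b = \theta^{-1}_{\tilde a}\,\theta_a\,\jmath_b = \theta^{-1}_{\tilde a}\,J_b\,\theta_a = \jmath_b\,\theta^{-1}_{\tilde a}\,\theta_a = \jmath_b\,\Phi_{\tilde aa}.
\]
Evaluating the two ends on $(\partial_1b,g)$ and using $\jmath_b(\partial_1b,g) = (\partial_0b,g)$ yields $(\partial_0b,\,z_{\tilde aa}(\partial_0b)\,g) = (\partial_0b,\,z_{\tilde aa}(\partial_1b)\,g)$; cancelling $g$ gives $z_{\tilde aa}(\partial_0b) = z_{\tilde aa}(\partial_1b)$, which by the characterization of locally constant functions recalled in Subsection \ref{Xc} is precisely local constancy of $z_{\tilde aa}$.

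Next I would obtain the cocycle identity by composing transition maps over the triple intersection. For $o\in V_{a\tilde a\hat a}$ all three maps $\theta_a,\theta_{\tilde a},\theta_{\hat a}$ are defined at $o$, so on $V_{a\tilde a\hat a}\times G$ one has the plain identity $\theta^{-1}_{\hat a}\,\theta_a = (\theta^{-1}_{\hat a}\,\theta_{\tilde a})\,(\theta^{-1}_{\tilde a}\,\theta_a)$, that is $\Phi_{\hat aa} = \Phi_{\hat a\tilde a}\,\Phi_{\tilde aa}$. Applying both sides to $(o,g)$ produces $(o,\,z_{\hat aa}(o)\,g)$ on the left and $(o,\,z_{\hat a\tilde a}(o)\,z_{\tilde aa}(o)\,g)$ on the right; cancelling $g$ in $G$ gives the asserted $z_{\hat a\tilde a}(o)\,z_{\tilde aa}(o) = z_{\hat aa}(o)$ for $o\in V_{a\tilde a\hat a}$.

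The computation is routine once the maps $\Phi_{\tilde aa}$ are in hand, so the main obstacle is not analytic but a bookkeeping one: I must be careful that the $1$--simplex $b$ used in the local constancy step genuinely lies in $\Si_1(V_{\tilde aa})$, equivalently that both of its faces lie in $V_a\cap V_{\tilde a}$, so that the compatibility relation of Proposition \ref{B:8} is available for $\theta_a$ and $\theta_{\tilde a}$ at once. I note in particular that no connectivity of $V_{\tilde aa}$ is required, since local constancy here is only the assertion that $z_{\tilde aa}$ agrees on comparable elements.
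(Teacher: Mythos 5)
Your proof is correct and follows essentially the same route as the paper: both arguments exploit that $\theta_a$ and $\theta_{\tilde a}$ each intertwine $\jmath_b$ with $J_b$, so the transition map $\theta^{-1}_{\tilde a}\theta_a$ commutes with $\jmath_b$, which evaluated on a fibre element gives $z_{\tilde aa}(\partial_0b)=z_{\tilde aa}(\partial_1b)$; the cocycle identity then comes from composing transition maps (the paper simply calls this step ``obvious''). Your write-up is, if anything, slightly more careful than the paper's, which contains a typo in the displayed compatibility relation and elides the triple-intersection computation you spell out.
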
 
\begin{proof}
Consider the 1--simplex $(o_1,o)$ of the nerve. By the defining 
properties  of a local trivialization (see proof of  
Proposition \ref{B:8})  we have $J_{(o_1,o)}\, \theta_a =
\jmath_{(o_1,o)}\, \theta_a$. This  implies that 
$\theta_a\, \jmath_{(o_1,o)}\,  \theta^{-1}_a
=\theta_{\tilde a}\,  \jmath_{(o_1,o)} \,\theta^{-1}_{\tilde a}$. 
Hence 
\[
 (o_1, z_{\tilde aa}(o_1))  
 =   \theta^{-1}_{\tilde a}\,   \theta_{a}\,  
    \jmath_{(o_1,o)} \,(o,e)
 = \jmath_{(o_1,o)}\,  \theta^{-1}_{\tilde a}\,
  \theta_{a} \, (o,e) 
 = (o_1,z_{\tilde aa}(o)),
\]
and this proves that these maps  are locally constant. The cocycle
identity is obvious.
\end{proof}
The functions $\{z_{\tilde aa}\}$  defined by equation (\ref{B:10})
will be called \emph{transition functions} of the principal net
bundles. The cocycle identity says that transition functions 
can be interpreted in terms of the \v Cech cohomology 
of posets. This aspect will be developed in Section \ref{E}. 
Finally, the dependence of  transition functions 
on the local trivialization will be discussed in 
Section \ref{Ca} in terms of the associated 1--cocycles.\smallskip

Consider a cross section $\si:V\rightarrow P$.  
We have already seen that $\si$ is locally constant (see Subsection
\ref{Ac}). Now, 
it is easily seen that, if $o\in V_{\tilde aa}$ and $\tilde a,a\in U$, 
then  $s_{\tilde a}(o) =  z_{\tilde aa}(o)\, s_{a}(a)$.
\begin{lemma}
\label{B:13}
$\cP$ is trivial if, and only if, it admits a global section. 
\end{lemma}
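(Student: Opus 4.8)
The plan is to prove both implications by exhibiting an explicit principal morphism between $\cP$ and the product principal net bundle $K\times G$, using the freeness and transitivity of the $G$--action on the fibres together with the section condition. Recall that, by the remarks following Definition \ref{B:2}, every morphism of principal net bundles with the same structure group is automatically an isomorphism, and that triviality means $\cP\cong K\times G$, i.e. $(K\times G,\cP)\ne\emptyset$. Hence in each direction it suffices to produce a single morphism of the appropriate kind.

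First I would treat the forward implication. Assume $\cP$ is trivial, so there is a morphism $f:K\times G\to P$. The product bundle carries the canonical global section $\si_0(o):=(o,e)$, with $e$ the unit of $G$; indeed $pr_1(o,e)=o$ and $\jmath_b(\partial_1b,e)=(\partial_0b,e)$, so $\si_0$ satisfies Definition \ref{Ac:2}. I then set $\si:=f\circ\si_0$ and check it is a global section of $\cP$. The morphism condition $\pi f=pr_1$ gives $\pi\,\si(o)=\pi f(o,e)=o$, so $\pi\,\si=\mathrm{id}_K$. The compatibility $J f=f\,\jmath$ together with the section property of $\si_0$ yields $J_b(\si(\partial_1b))=J_b\,f(\partial_1b,e)=f\,\jmath_b(\partial_1b,e)=f(\partial_0b,e)=\si(\partial_0b)$, which is exactly the section condition.

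For the converse, assume $\cP$ has a global section $\si:K\to P$ and define $f:K\times G\to P$ by $f(o,g):=\si(o)\cdot g$. I would verify the conditions of Definition \ref{Aa:4} and equivariance. Fibre preservation $\pi f=pr_1$ follows from $\pi\,\si=\mathrm{id}_K$ and $\pi R_g=\pi$. Compatibility with the net structures uses $J R=R J$ and the section condition: $J_b\,f(\partial_1b,g)=J_b(\si(\partial_1b))\cdot g=\si(\partial_0b)\cdot g=f(\partial_0b,g)=f\,\jmath_b(\partial_1b,g)$. Equivariance $R\,f=f\,r$ is immediate from associativity of the action, since $f(o,gh)=\si(o)\cdot(gh)=(\si(o)\cdot g)\cdot h$. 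Thus $f\in(K\times G,\cP)$, so this set is nonempty and $\cP\cong K\times G$, i.e. $\cP$ is trivial.

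The argument is essentially routine once the data are unwound, and the main conceptual point sits in the converse: the map $f$ is genuinely a bijection because on each fibre $g\mapsto\si(o)\cdot g$ is surjective by transitivity and injective by freeness of the action, which is the concrete content behind the abstract fact that every such morphism is an isomorphism. The only place demanding care is bookkeeping the direction $\partial_1b\to\partial_0b$ in which $J_b$ transports fibres, so that the section condition $J_b(\si(\partial_1b))=\si(\partial_0b)$ is matched against the product net structure $\jmath_b$ in the correct order.
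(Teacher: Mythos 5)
Your proof is correct and follows essentially the same route as the paper: the forward direction takes $\si(o):=f(o,e)$ (your $f\circ\si_0$), and the converse defines $f(o,g):=\si(o)\cdot g$ and checks fibre preservation, compatibility with the net structures, and equivariance, exactly as in the paper's argument. The only difference is that you spell out details the paper leaves implicit (the section property of $\si_0$ and the bijectivity of $f$ via freeness and transitivity), which is fine.
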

\begin{proof}
$(\Rightarrow)$ Given  $f\in (K\times G,  \cP)$,
then $\si(o) := f(o,e)$ is a global cross section of
$\cP$.  $(\Leftarrow)$ Given a global section $\si$, define 
$f(o,g) := \si(o)\cdot g$ for any pair 
$(o,g)$. It is easily seen $f: K\times G\to P$ is an equivariant and 
fibre preserving bijective map. Furthermore, 
for any 1--simplex $b$ of the nerve we have 
\begin{align*}
   f\, \jmath_{b} (\partial_1b,g) & = f(\partial_0b,g) = 
    \si(\partial_0b)\cdot g
   =  J_{b}(\si(\partial_1b)) \cdot g \\
  & = J_{b}(\si(\partial_1b)\cdot g) 
   =  J_{b}\,  f (\partial_1b, g), 
\end{align*}
completing the proof.
\end{proof}
Morphisms between principal net bundles, 
like  sections and local trivializations, 
are locally constant. Consider $f\in (\hat\cP, \cP)$, and let 
$\hat\theta$ and $\theta$ be local trivializations of $\hat\cP$ and 
$\cP$ respectively. Define 
\begin{equation}
\label{B:12}
 (o,  f_a(o,g)) := 
   \theta^{-1}_a\ f\ {\hat\theta}_a  (o, g), \qquad 
   (o,g)\in V_a\times G.
\end{equation}
This equation defines, for any 0--simplex $a$, a function 
$f_a:V_a\times G \rightarrow G$ 
enjoying the following properties:
\begin{lemma} 
\label{B:14}
Under the above notation and assumptions, the function 
$f_a:V_a\times G \rightarrow G$ is  locally constant and 
$f_a(o,g) = f_a(o,e)g$, for any 
$(o,g)\in V_a\times G$.
\end{lemma}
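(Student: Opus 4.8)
The plan is to establish the two assertions in turn, observing that the equivariance identity $f_a(o,g)=f_a(o,e)\,g$ removes the group variable from the picture and thereby reduces local constancy to a statement about the single--variable maps $o\mapsto f_a(o,e)$ on $V_a$.

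First I would prove the equivariance identity. The two local trivializations satisfy $R\,\theta_a=\theta_a\,r$ and $\hat R\,\hat\theta_a=\hat\theta_a\,r$ (established at the end of the proof of Proposition \ref{B:8}), where $r$ is the product action $r_h(o,g)=(o,gh)$; and the morphism satisfies $R\,f=f\,\hat R$ by Definition \ref{B:2}. Writing $\hat\theta_a(o,g)=\hat\theta_a(o,e)\cdot g$, then pushing the factor $g$ through $f$ and afterwards through $\theta_a^{-1}$ by means of these three equivariances, yields $\theta_a^{-1}f\hat\theta_a(o,g)=(o,f_a(o,e)\,g)$, whence $f_a(o,g)=f_a(o,e)\,g$. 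This is a direct unwinding of the definitions, and I expect no obstacle here.

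For local constancy I would mimic the argument of Lemma \ref{B:11}. Fix $o,o_1\in V_a$ with $o\le o_1$, so that $(o_1,o)$ is a $1$--simplex of the nerve, and recall the intertwining relations $\hat J_{(o_1,o)}\,\hat\theta_a=\hat\theta_a\,\jmath_{(o_1,o)}$ and $J_{(o_1,o)}\,\theta_a=\theta_a\,\jmath_{(o_1,o)}$ enjoyed by local trivializations (proof of Proposition \ref{B:8}), together with the fact that a net bundle morphism commutes with the net structures, which here reads $J_{(o_1,o)}\,f=f\,\hat J_{(o_1,o)}$ (Definition \ref{Aa:4}). Starting from $(o_1,f_a(o_1,g))=\theta_a^{-1}f\hat\theta_a(o_1,g)$ and using $\hat\theta_a\,\jmath_{(o_1,o)}(o,g)=\hat\theta_a(o_1,g)$, I would transport the nerve $1$--simplex $(o_1,o)$ first across $\hat\theta_a$, then across $f$, then across $\theta_a^{-1}$, obtaining $(o_1,f_a(o_1,g))=\jmath_{(o_1,o)}(o,f_a(o,g))=(o_1,f_a(o,g))$. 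Hence $f_a(o_1,g)=f_a(o,g)$, which is precisely local constancy in the sense of Subsection \ref{Xc}.

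The only point requiring genuine care is the bookkeeping: keeping the directions of the three intertwining relations straight, and in particular checking that the net structure condition for the morphism $f:\hat P\to P$ is applied in the correct orientation, with the target net structure $J$ on the left and the source net structure $\hat J$ on the right. Once these orientations are aligned the computation is a single chain of substitutions, entirely parallel to the proof of Lemma \ref{B:11}, and no real difficulty arises.
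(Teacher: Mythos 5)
Your proposal is correct and follows essentially the same route as the paper: the local constancy argument is the identical chain of substitutions, transporting the nerve $1$--simplex through $\hat\theta_a$, then $f$ via $J\,f = f\,\hat J$, then $\theta_a^{-1}$, while the identity $f_a(o,g)=f_a(o,e)\,g$ comes from equivariance of $f$ and the trivializations. The only difference is ordering and explicitness — the paper runs the chain at $g=e$ and dispatches equivariance in one sentence, whereas you prove equivariance first and run the chain at general $g$ — which is immaterial.
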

\begin{proof}
Given $b\in \Si_1(V_a)$. Since $f$  $J\, f = f\, \hat J$,  we have 
\begin{align*}
 (\partial_0b,  f_a (\partial_0b,e)) & = 
   \theta^{-1}_a\, f\,  {\hat\theta}_a\,  (\partial_0b, e) 
 = \theta^{-1}_a\,  f\,
    {\hat\theta}_a\,  \jmath_{b}\,  (\partial_1b, e) \\
& =  \theta^{-1}_a\, f\, 
           {\hat J}_{b}\, {\hat\theta}_a\,(\partial_1b, e) 
 =  \theta^{-1}_a\,  J_{b} \, f\,
                  {\hat\theta}_a \, (\partial_1b, e) \\
& = \jmath_{b}\, \theta^{-1}_a\, f\,
                  {\hat\theta}_a \, (\partial_1b, e) = 
   \jmath_{b}\, (\partial_1b, f_a(\partial_1b,e)) 
\\
& =  (\partial_0b,  f_a(\partial_1b,e)).
\end{align*}
Hence $f_a$ is locally constant. Equivariance of $f$ completes 
 the proof.
\end{proof}

\section{Cohomological representation and equivalence}
\label{C}
Because of the lack of a differential structure, we replace 
the differential calculus of forms with a cohomology taking values 
in the structure groups of principal net bundles. 
In the present section we show that the 
theory of principal bundles and connections over
posets described in the previous section, 
admits an equivalent description in terms of a non-Abelian cohomology 
of posets \cite{RR06}. 
We will construct mappings 
associating  to geometrical objects like 
principal net bundles and connections, the corresponding 
cohomological objects: 1--cocycles and connections 1--cochains. 
We refer the reader to the Appendix for notation and a 
brief description of the results of the cited paper.

\subsection{Cohomological representation}
\label{Ca}
Consider a connection $U$ on a principal net bundle $\cP$. 
Let $\theta$ be a local trivialization 
of $\cP$. Given a 1--simplex $b$ of $\tilde\Si_1(K)$, define 
\begin{equation}
\label{Ca:1}
(\partial_0b,  \Gamma_\theta(U)(b) \cdot g)  := 
\theta^{-1}_{\partial_0b}\,  U(b) \, 
\theta_{\partial_1b}\, (\partial_1b,g),
\end{equation}
for any $g\in G$. By equivariance,    
this definition is well posed. This equation associates 
a 1--cochain $\Gamma_{\theta}(U):\tilde\Si_1(K)\rightarrow G$ to the 
connection $U$. 
\begin{proposition}
\label{Ca:2}
Given $\cP\in\sP(K,G)$,    let $\theta$
be a local trivialization of $\cP$. Then 
the following assertions hold: 
\begin{itemize}
\item[(i)] $\Gamma_\theta(U)\in \rU^1(K,G)$ for any $U\in \sU(K,\cP)$;
\item[(ii)] $\Gamma_\theta(Z)\in \rZ^1(K,G)$;
\item[(iii)] $\Gamma_\theta(U)\in 
 \rU^1(K,\Gamma_\theta(Z))$ for any $U\in \sU(K,\cP)$. 
\end{itemize}
\end{proposition}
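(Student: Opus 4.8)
The plan is to extract all three assertions from a single reformulation of (\ref{Ca:1}): taking $g=e$ and using that a local trivialization satisfies $\theta_a(o,g)=R_g\,\theta_a(o,e)$ (Proposition \ref{B:8}), the definition of $\Gamma_\theta(U)$ is equivalent to
\[
 U(b)\,\theta_{\partial_1 b}(\partial_1 b, e)\;=\;\theta_{\partial_0 b}(\partial_0 b,\,\Gamma_\theta(U)(b)),\qquad b\in\tilde\Si_1(K).
\]
Together with equivariance of the connection, $U(b)\,R=R\,U(b)$ (Definition \ref{B:5}), this identity already yields the well-posedness claimed after (\ref{Ca:1}): pushing $R_g$ through $U(b)$ and then through $\theta_{\partial_0 b}$ shows that replacing $e$ by $g$ multiplies the second argument on the right by $g$, independently of how $\Gamma_\theta(U)(b)$ was read off. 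I will use this reformulation, the equivariance of $U$, and the injectivity of the maps $\theta_a$ throughout.

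For (i), I would check that $\Gamma_\theta(U)$ is a field on $\tilde\Si_1(K)$ with normalization $\Gamma_\theta(U)(\si_0 a)=e$ and reverse property $\Gamma_\theta(U)(\overline b)=\Gamma_\theta(U)(b)^{-1}$. The normalization is immediate from $U(\si_0 a)=\mathrm{id}$. For the reverse property, apply the reformulation to $\overline b$, using $\partial_0\overline b=\partial_1 b$, $\partial_1\overline b=\partial_0 b$ and $U(\overline b)=U(b)^{-1}$ (Definition \ref{Ab:2}); rearranging the reformulation for $b$ and commuting $R$ through $U(b)^{-1}$ by equivariance produces $U(b)^{-1}\theta_{\partial_0 b}(\partial_0 b,e)=\theta_{\partial_1 b}(\partial_1 b,\Gamma_\theta(U)(b)^{-1})$, so injectivity of $\theta_{\partial_1 b}$ forces $\Gamma_\theta(U)(\overline b)=\Gamma_\theta(U)(b)^{-1}$. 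That $\Gamma_\theta(U)$ lies in $\rU^1(K,G)$ then also requires its restriction to the nerve to be a $1$--cocycle, a point I will obtain for free from parts (ii) and (iii).

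Part (ii) is the only step that genuinely uses flatness and is, to my mind, the heart of the proposition. For $c\in\tilde\Si_2(K)$ set $u:=\partial_{00}c$, $v:=\partial_{10}c$, $w:=\partial_{11}c$; the simplicial identities $\partial_{ij}=\partial_{j-1,i}$ ($i<j$) give $\partial_0 c\colon v\to u$, $\partial_2 c\colon w\to v$ and $\partial_1 c\colon w\to u$, so the three faces meet in the pattern required by the cocycle identity and the relevant frames are $\theta_u,\theta_v,\theta_w$. Chaining the reformulation for $\partial_2 c$ with that for $\partial_0 c$ and commuting $R_{\Gamma_\theta(Z)(\partial_2 c)}$ through the equivariant map $Z(\partial_0 c)$ yields
\[
 Z(\partial_0 c)\,Z(\partial_2 c)\,\theta_w(w,e)\;=\;\theta_u(u,\,\Gamma_\theta(Z)(\partial_0 c)\,\Gamma_\theta(Z)(\partial_2 c)),
\]
whereas flatness of $Z$, namely $Z(\partial_0 c)\,Z(\partial_2 c)=Z(\partial_1 c)$, together with the reformulation for $\partial_1 c$ identifies the same left-hand side with $\theta_u(u,\Gamma_\theta(Z)(\partial_1 c))$. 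Injectivity of $\theta_u$ then gives $\Gamma_\theta(Z)(\partial_0 c)\,\Gamma_\theta(Z)(\partial_2 c)=\Gamma_\theta(Z)(\partial_1 c)$ for every $c$, i.e.\ $\Gamma_\theta(Z)\in\rZ^1(K,G)$.

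Finally, for (iii) I would only need that $\Gamma_\theta(U)$ and $\Gamma_\theta(Z)$ coincide on the nerve. For $b\in\Si_1(K)$ the two connections agree, $U(b)=J_b=Z(b)$ (Definition \ref{Ab:2}(ii) and Proposition \ref{Ab:3}), so the reformulation produces literally the same group element; hence $\Gamma_\theta(U)$ and $\Gamma_\theta(Z)$ restrict to the same $1$--cocycle on $\Si_1(K)$, which (given (i) and (ii)) is exactly the statement $\Gamma_\theta(U)\in\rU^1(K,\Gamma_\theta(Z))$ and simultaneously supplies the missing nerve-cocycle condition in (i). I expect the only real difficulty to be the bookkeeping in (ii): keeping straight which local frame $\theta_u,\theta_v,\theta_w$ sits at each vertex and moving the group elements past the equivariant connection maps in the correct order; every other manipulation is a direct consequence of equivariance and the displayed reformulation of (\ref{Ca:1}).
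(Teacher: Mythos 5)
Your proof is correct and follows essentially the same route as the paper: the paper's proof also reduces everything to the conjugated form of (\ref{Ca:1}) (written there via the maps $\ell_{a,a_1}(g)$ rather than your pointwise evaluation at $(a,e)$ plus equivariance and injectivity), derives the reverse property from $U(\overline b)=U(b)^{-1}$, gets the cocycle identity for $\Gamma_\theta(Z)$ from flatness together with the face relations $\partial_{ij}=\partial_{j-1,i}$, and obtains (iii) and the nerve-cocycle condition in (i) from the agreement $U(b)=Z(b)=J_b$ on $\Si_1(K)$. The two arguments differ only in notation, not in substance.
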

\begin{proof}
It is convenient to introduce a new notation.
Given two 0--simplices 
$a$ and $a_1$ and an element $g\in G$, 
let $\ell_{a,a_1}(g): {a_1}\times G\to  a\times G$
be defined  by 
$\ell_{a,a_1}(g) (a_1,h) := (a,gh)$ for any  $h\in G$.
Then observe that equation (\ref{Ca:1}) can be rewritten as 
\begin{equation}
\label{Ca:3}
 \ell_{\partial_0b,\partial_1b} \big(\Gamma_{\theta}(U)(b)\big) = 
 \theta^{-1}_{\partial_0b}\,  U(b) \, \theta_{\partial_1b} \ .
\end{equation}
Now, given $U\in \sU(K,\cP)$ and a 1--simplex $b$, observe that 
\begin{align*}
\ell_{\partial_1b,\partial_0b}\big(\Gamma_{\theta}(U)(\overline{b})\big)
&  =  
\theta^{-1}_{\partial_1b}\, U(b)^{-1} \, 
\theta_{\partial_0b} 
 \\
& = 
\big(\theta^{-1}_{\partial_0b}\,  U(b)\, 
 \theta_{\partial_1b}\big)^{-1} = \ell_{\partial_1b,\partial_0b}\big(
 \Gamma_{\theta}(U)(b)^{-1}\big) \ .
\end{align*}
Hence $\Gamma_\theta(U)(\overline{b})= \Gamma_\theta(U)(b)^{-1}$. If 
$b$ is a 1--simplex of the nerve, then 
\[
\ell_{\partial_0b,\partial_1b}\big(\Gamma_{\theta}(U)(b)\big)  = 
 \theta^{-1}_{\partial_0b}\, J_{\partial_0b,\partial_1b}\,
\theta_{\partial_1b}
 = \theta^{-1}_{\partial_0b}\, Z(b)\, 
 \theta_{\partial_1b} 
  = \ell_{\partial_0b,\partial_1b}\big(\Gamma_{\theta}(Z)(b)\big) \ . 
\]
Hence $\Gamma_\theta(U)$  is equal to $\Gamma_\theta(Z)$
on 1--simplices of the nerve. It remains to show that 
$\Gamma_\theta(Z)$ is a 1--cocycle. Given a 2--simplex $c$ we have
\begin{align*}
\ell_{\partial_{00}c,\partial_{12}c} 
\big(\Gamma_{\theta}(Z)(\partial_0c)\, \Gamma_{\theta}(Z) & 
(\partial_2c)\big)  
   =  \\
& = \theta^{-1}_{\partial_{00}c}\,  Z(\partial_0c)\, 
     \theta_{\partial_{10}c}\,
\theta^{-1}_{\partial_{02}c}\,  Z(\partial_2c)\,
 \theta_{\partial_{12}c}\\
&  
 =  \theta^{-1}_{\partial_{01}c}\,
Z(\partial_1c)\,
 \theta_{\partial_{11}c}
=  \theta^{-1}_{\partial_{00}c}\,
Z(\partial_1c)\,
 \theta_{\partial_{12}c}\\
&  = \ell_{\partial_{00}c,\partial_{12}c}
\big(\Gamma_{\theta}(Z)(\partial_1c)\big) \ ,
\end{align*}
and this completes the proof.
\end{proof} 
Sometimes,  when no confusion is possible, 
we will adopt the following notation
\begin{equation}
\label{Ca:4}
\begin{array}{ll}
 u_\theta  := \Gamma_\theta(U), &  U\in\sU(\cP),\\[3pt]
 z_\theta  := \Gamma_\theta(Z), &  Z\in\sU(\cP),
\end{array}
\end{equation}
where $Z$ is the flat connection of $\cP$.
Since $Z$ is unique, we will refer to $z_\theta$
as the \emph{bundle cocycle} of $\cP$. 
Furthermore, 
we will call $u_\theta$ the \emph{connection cochain} of 
the connection $U$. We shall see in Proposition \ref{Ca:7} 
how these objects depend on the choice of  
local trivialization. Notice that 
the assertion $(iii)$ of the above proposition, 
says that for any connection $U\in\sU(K,\cP)$, 
$z_\theta$ is the 1--cocycle induced by $u_\theta$ 
(see Appendix).\smallskip

The next result shows the relation  between the bundle cocycle 
and transition functions of the bundle.
\begin{lemma}
\label{Ca:5}
Given  a principal net bundle  $\cP$,  let $\theta$ 
be a local trivialization and $\{ z_{a\tilde a}\}$   
the corresponding family of transition functions. Then 
\[
 z_\theta(b) = z_{\partial_0b,\partial_1b}(|b|)\ , 
\qquad b\in\tilde\Si_1(K). 
\]
\end{lemma}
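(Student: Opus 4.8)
The plan is to fix $b\in\tilde\Si_1(K)$, write it as $b=(o;a,\tilde a)$ with support $o=|b|$, $a=\partial_0b$ and $\tilde a=\partial_1b$ (so that $a,\tilde a\le o$), and to show that $z_\theta(b)$ and $z_{a\tilde a}(o)$ are pinned down by one and the same equation in the fibre $P_o$. Throughout I use the convention, already implicit in (\ref{B:9}), that $J_{(x,y)}\colon P_y\to P_x$ denotes the canonical fibre bijection for \emph{any} comparable pair $x,y$, with $J_{(x,y)}=J_{(y,x)}^{-1}$; on nerve simplices it is the net structure, and Definition \ref{Aa:1}(i) makes it compose as a cocycle along comparable chains.

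First I would compute $z_\theta(b)=\Gamma_\theta(Z)(b)$. Recall from the proof of Proposition \ref{B:8} that $\theta_a$ is built from a chosen $\phi_a\in P_a$ via (\ref{B:9}), so that $\theta_{\tilde a}(\tilde a,e)=\phi_{\tilde a}$ and $\theta_a(a,g)=\phi_a\cdot g$. Feeding $\phi_{\tilde a}$ into the defining relation (\ref{Ca:1}) for the flat connection $Z$, and using the equivariance of $Z$ together with the expression $Z(b)=J^{-1}_{(o,a)}J_{(o,\tilde a)}$ from (\ref{Ab:1}), identifies $z_\theta(b)$ as the unique $h\in G$ with $Z(b)(\phi_{\tilde a})=\phi_a\cdot h$; equivalently
\[
J_{(o,\tilde a)}(\phi_{\tilde a})=J_{(o,a)}(\phi_a)\cdot z_\theta(b).
\]

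Next I would show the transition function obeys the same relation. The delicate point, which I expect to be the only conceptual obstacle, is that (\ref{B:10}) defines $z_{a\tilde a}$ a priori only on $V_{a\tilde a}=V_a\cap V_{\tilde a}$, i.e.\ below both $a$ and $\tilde a$, whereas the support $o=|b|$ lies above them. This is resolved by Lemma \ref{B:11}: $z_{a\tilde a}$ is locally constant, so it has a single value on $V_{a\tilde a}$, which I take to be $z_{a\tilde a}(|b|)$. Choosing $o'\in V_{a\tilde a}$ and unwinding (\ref{B:10}) at $o'$ gives $J_{(o',\tilde a)}(\phi_{\tilde a})=J_{(o',a)}(\phi_a)\cdot z_{a\tilde a}(o')$. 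Applying $J_{(o,o')}$ to both sides and collapsing the composites along $o'\le a\le o$ and $o'\le\tilde a\le o$ by the cocycle identity, namely $J_{(o,o')}J_{(o',a)}=J_{(o,a)}$ and $J_{(o,o')}J_{(o',\tilde a)}=J_{(o,\tilde a)}$, together with equivariance of $J$, transports this up to
\[
J_{(o,\tilde a)}(\phi_{\tilde a})=J_{(o,a)}(\phi_a)\cdot z_{a\tilde a}(o').
\]
Comparing the two displayed equations and invoking freeness of the $G$-action on $P_o$ (Definition \ref{B:1}) forces $z_\theta(b)=z_{a\tilde a}(o')=z_{a\tilde a}(|b|)$, which is the claim.

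The remaining steps — tracking the up/down net maps $J_{(x,y)}$, the identity $J_{(\tilde a,\tilde a)}=\mathrm{id}$, and the repeated appeals to equivariance and freeness — are routine. I note finally that if one prefers to avoid any non-emptiness assumption on $V_{a\tilde a}$, one may simply extend each $\theta_a$ to points $o\ge a$ by the same formula $J_{(o,a)}(\phi_a)\cdot g$ and adopt this as the definition of $z_{a\tilde a}$ at the support; the first displayed equation then holds at $o$ verbatim, and the identity is immediate.
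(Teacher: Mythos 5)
Your two displayed equations, combined with freeness of the action, are in substance exactly the paper's argument in pointwise form. The paper proves the identity by the map-level computation
\[
\ell_{|b|,|b|}\big(z_{\partial_0b,\partial_1b}(|b|)\big)
=\theta^{-1}_{\partial_0b}\,\theta_{\partial_1b}
=\jmath_{(|b|,\partial_0b)}\;\theta^{-1}_{\partial_0b}\,Z(b)\,\theta_{\partial_1b}\;\jmath^{-1}_{(|b|,\partial_1b)}
=\ell_{|b|,|b|}\big(z_\theta(b)\big),
\]
obtained by inserting $\jmath\,\jmath^{-1}$ and $J\,J^{-1}$ factors, using the intertwining property $J_b\,\theta_a=\theta_a\,\jmath_b$ and $Z(b)=J^{-1}_{(|b|,\partial_0b)}J_{(|b|,\partial_1b)}$; evaluating this chain on the chosen points $\phi_a=\theta_a(a,e)$, $\phi_{\tilde a}=\theta_{\tilde a}(\tilde a,e)$ gives precisely your two equations $J_{(|b|,\partial_1b)}(\phi_{\tilde a})=J_{(|b|,\partial_0b)}(\phi_{\partial_0b})\cdot z_\theta(b)$ and its transition-function counterpart. (Your reduction to the form (\ref{B:9}) is harmless: any local trivialization is of that form with $\phi_a:=\theta_a(a,e)$.)

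However, your primary route has a genuine gap, and it lies exactly at the point you flagged as delicate. First, Lemma \ref{B:11} gives only \emph{local} constancy of $z_{a\tilde a}$ on $V_{a\tilde a}$; since $V_{a\tilde a}$ need not be pathwise connected, ``a single value on $V_{a\tilde a}$'' does not follow from it (your argument does show a posteriori that every value equals $z_\theta(b)$, so this part is reparable). Second, and fatally for that route, $V_{a\tilde a}$ can be empty: the faces $a=\partial_0b$, $\tilde a=\partial_1b$ are guaranteed a common \emph{upper} bound $|b|$, but not a common lower bound. The three-element poset $\{a,\tilde a,o\}$ with $a\leq o$, $\tilde a\leq o$ and $a,\tilde a$ incomparable is pathwise connected and carries the $1$--simplex $(o;a,\tilde a)$, yet no $o'\in V_{a\tilde a}$ exists, so your main argument cannot start. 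For this reason your closing remark cannot be offered as an optional variant: it is the only reading under which $z_{\partial_0b,\partial_1b}(|b|)$ is even defined in general, and it is what the paper's own proof tacitly does --- its first step applies (\ref{B:10}) at $o=|b|$, which presupposes $\theta_{\partial_1b}$ and $\theta^{-1}_{\partial_0b}$ extended above their indices by the same formula (\ref{B:9}). Promote that remark to the definition of $z_{\partial_0b,\partial_1b}(|b|)$ and run your (then immediate) freeness comparison there; with that change your proof is correct and coincides with the paper's.
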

\begin{proof}
Using the definition of transition functions, we have 
\begin{align*}
\ell_{|b|,|b|}\big(z_{\partial_0b,\partial_1b}& (|b|) \big)
  =   \theta^{-1}_{\partial_0b}\, 
 \theta_{\partial_{1}b} \\
& = \theta^{-1}_{\partial_0b}\,
 \theta_{\partial_{1}b}\  
\jmath_{(|b|,\partial_1b)}\  \jmath^{-1}_{(|b|,\partial_1b)}\\
& = \theta^{-1}_{\partial_0b}\, J_{(|b|,\partial_1b)}
 \, \theta_{\partial_{1}b}\  \jmath^{-1}_{(|b|,\partial_1b)}\\
& = \theta^{-1}_{\partial_0b}\, 
J_{(|b|,\partial_0b)} \,  J_{(|b|,\partial_0b)}^{-1}\,  J_{(|b|,\partial_1b)}
 \, \theta_{\partial_{1}b}\ \jmath^{-1}_{(|b|,\partial_1b)}\\
& = \jmath_{(|b|,\partial_0b)}\  \theta^{-1}_{\partial_0b}\, 
     Z(b)
 \,  \theta_{\partial_{1}b} \   \jmath^{-1}_{(|b|,\partial_1b)}\\
& = \jmath_{(|b|,\partial_0b)} \ 
\ell_{\partial_0b,\partial_1b}\big(z_\theta(b)\big) \  
\jmath^{-1}_{(|b|,\partial_1b)}\\
& = \ell_{|b|,|b|}\big( z_\theta(b)\big) \ ,  
\end{align*}
and this completes the proof.
\end{proof}
The mapping $\Gamma_\theta$ defines  a correspondence between 
connections and connection 1--cochains. Moreover, 
as a principal net bundle $\cP$ has a unique flat
connection $Z$, the relation $\Gamma_\theta(Z)=z_\theta$
gives   a correspondence between principal net bundles 
and 1--cocycles. 
We will show that these correspondences are  
equivalences of categories. To this end, we extend this map 
to morphisms. \\
\indent Consider $\hat\cP,\cP\in\sP(K,G)$, and let 
$\hat\theta$ and $\theta$ be, respectively, 
local trivializations of $\hat\cP$ and $\cP$. 
Given $f\in (\hat\cP,\cP)$,   define 
\begin{equation}
\label{Ca:6}
   \big(a, \Gamma_{\theta\hat\theta} (f)_a\,  g\big)  := 
   \big(\theta^{-1}_a\,  f\,  {\hat\theta}_a \big)(a,g) \ ,  
\end{equation}
for any 0--simplex $a$ and for any $g\in G$. 
Clearly, $\Gamma_{\theta \hat\theta}(f):\tilde\Si_0(K)\rightarrow G$.
\begin{proposition}
\label{Ca:7}
Let $\hat\cP,\cP\in\sP(K,G)$,  and let 
$\hat\theta$ and $\theta$ be, respectively, 
local trivializations. Then,
given $\hat U\in \sU(K,\hat\cP)$ and $U\in \sU(K,\cP)$, we have 
\[
 \Gamma_{\theta\hat\theta}(f)
         \in(\hat u_{\hat\theta} , u_\theta), \qquad
         f\in(\hat U,U).
\]
In particular if  $f\in(\hat\cP,\cP)$, then 
$\Gamma_{\theta\, \hat\theta}(f)\in(\hat z_{\hat\theta}, z_\theta)$.
\end{proposition}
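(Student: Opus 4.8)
The plan is to check directly that the $0$--cochain $v:=\Gamma_{\theta\hat\theta}(f)$ satisfies the relation defining an arrow from $\hat u_{\hat\theta}$ to $u_\theta$ in the category of $1$--cochains (see the Appendix), namely the intertwining identity
\[
 v_{\partial_0b}\, \hat u_{\hat\theta}(b) = u_\theta(b)\, v_{\partial_1b}, \qquad b\in\tilde\Si_1(K).
\]
The entire verification is bookkeeping with the $\ell$--notation introduced in the proof of Proposition \ref{Ca:2}; the single piece of genuine input will be the defining equation $f\,\hat U=U\,f$ of a connection morphism.

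First I would rewrite the relevant definitions in $\ell$--form. From \ref{Ca:6} together with Lemma \ref{B:14} (which guarantees that the local representative of $f$ is right--invariant and locally constant), the restriction of $\theta_a^{-1}\,f\,\hat\theta_a$ to the fibre over $a$ is exactly $\ell_{a,a}(v_a)$; from \ref{Ca:3} one has $\ell_{\partial_0b,\partial_1b}\big(u_\theta(b)\big)=\theta_{\partial_0b}^{-1}\,U(b)\,\theta_{\partial_1b}$ and likewise $\ell_{\partial_0b,\partial_1b}\big(\hat u_{\hat\theta}(b)\big)=\hat\theta_{\partial_0b}^{-1}\,\hat U(b)\,\hat\theta_{\partial_1b}$. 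I would also record the two formal facts I need about $\ell$: the composition law $\ell_{a,a_1}(g)\,\ell_{a_1,a_2}(g')=\ell_{a,a_2}(gg')$, immediate from the definition, and the injectivity of $\ell_{a,a_1}(\cdot)$ in its group argument.

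Next I would apply $\ell_{\partial_0b,\partial_1b}$ to both sides of the target identity and expand by the composition law. Writing the left side as $\ell_{\partial_0b,\partial_0b}(v_{\partial_0b})\,\ell_{\partial_0b,\partial_1b}(\hat u_{\hat\theta}(b))$ and substituting, the inner factor $\hat\theta_{\partial_0b}\,\hat\theta_{\partial_0b}^{-1}$ cancels and one is left with $\theta_{\partial_0b}^{-1}\,f\,\hat U(b)\,\hat\theta_{\partial_1b}$; symmetrically, the right side $\ell_{\partial_0b,\partial_1b}(u_\theta(b))\,\ell_{\partial_1b,\partial_1b}(v_{\partial_1b})$ collapses to $\theta_{\partial_0b}^{-1}\,U(b)\,f\,\hat\theta_{\partial_1b}$. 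The two expressions coincide precisely because $f\,\hat U(b)=U(b)\,f$, and injectivity of $\ell$ then returns the required group--level identity. The only point demanding care is matching the $0$--simplex indices ($\partial_0b$ feeding the left factor, $\partial_1b$ the right), but this is forced by the domains of the maps $\theta$, $\hat\theta$, $U$ and $\hat U$, so I do not anticipate a real obstacle here.

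Finally, for the ``in particular'' clause I would specialise to $\hat U=\hat Z$ and $U=Z$. Because the flat connection is determined by the net structure (Proposition \ref{Ab:3}) and a bundle morphism commutes with the net structure (Definition \ref{Aa:4}), every $f\in(\hat\cP,\cP)$ automatically lies in $(\hat Z,Z)$ --- the two--bundle version of the identity $(Z,Z)=(\cP,\cP)$ noted before Lemma \ref{B:7}. Since $\Gamma_{\hat\theta}(\hat Z)=\hat z_{\hat\theta}$ and $\Gamma_\theta(Z)=z_\theta$ by \ref{Ca:4}, the general statement then yields $\Gamma_{\theta\hat\theta}(f)\in(\hat z_{\hat\theta},z_\theta)$, completing the plan.
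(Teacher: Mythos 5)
Your proposal is correct and follows essentially the same route as the paper's proof: rewrite $\Gamma_{\theta\hat\theta}(f)$ and the connection cochains in the $\ell$--notation of Proposition \ref{Ca:2}, expand the intertwining identity by the composition law so that the inner factors $\hat\theta_{\partial_0b}\,\hat\theta^{-1}_{\partial_0b}$ and $\theta_{\partial_1b}\,\theta^{-1}_{\partial_1b}$ cancel, and conclude from $f\,\hat U(b)=U(b)\,f$. Your explicit handling of the ``in particular'' clause --- noting that any $f\in(\hat\cP,\cP)$ automatically intertwines the flat connections because these are built from the net structures --- is left implicit in the paper, but it is the natural completion of the same argument.
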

\begin{proof}
First of all observe that the  equation (\ref{Ca:6}) is equivalent to 
\begin{equation}
\label{Ca:9}
   \ell_{aa}\big(\Gamma_{\theta\hat\theta} (f)_a\big)  = 
   \theta^{-1}_a \, f\, {\hat\theta}_a \ .
\end{equation}
Now, given a 1-simplex $b$, we have 
\begin{align*}
\ell_{\partial_0b,\partial_1b} 
 \big(\Gamma_{\theta\hat\theta}&(f)_{\partial_0b}\,\hat
 u_{\hat\theta}(b)\big)
 =   \theta^{-1}_{\partial_0b} \,
 f \,  {\hat\theta}_{\partial_0b} \,
         {\hat\theta}^{-1}_{\partial_0b} \, 
 \hat U(b)  \,
 {\hat\theta}_{\partial_1b}\\
&    =  \theta^{-1}_{\partial_0b} \,
 U(b) \,
      \theta_{\partial_1b} \,
      \theta^{-1}_{\partial_1b} \, f \,
 {\hat\theta}_{\partial_1b} =
  \ell_{\partial_0b,\partial_1b}
 \big( u_\theta(b)\, 
  \Gamma_{\theta \hat\theta}(f)_{\partial_1b}\big),
\end{align*}
and this completes the proof.
\end{proof}
This result provides the formula for  changing  
a local trivialization. In fact let $\theta$ 
and $\phi$ be two local trivialization of the bundle $\cP$. Then 
\begin{equation}
\label{Ca:10}
\Gamma_{\theta\phi}(1_\cP)\in 
 (u_\phi,u_\theta), \qquad U\in\sU(K,\cP).
\end{equation}
Hence, changing the  local trivializations leads to 
equivalent connection 1--cochains. \smallskip

We now show that $\Gamma_\theta$ 
is invertible.  To this end, given
$u\in\rU^1(K, z_\theta)$, $\hat u\in\rU^1(K,{\hat z}_{\hat\theta})$, 
and $\mathrm{f}\in (u,\hat u)$, define 
\begin{equation}
\label{Ca:11}
\begin{array}{rcll}
 \Upsilon_\theta(u)(b) (\psi) & :=&  \big(\theta_{\partial_0b}\,
 \ell_{\partial_0b,\partial_1b}(u(b)) \,
 \theta^{-1}_{\partial_1b}\big)(\psi) \ , & 
  \psi\in\pi_1^{-1}(\partial_1b) \ ,    \\[5pt]
 \Upsilon_{\theta\hat\theta}(\mathrm{f})(\psi) & :=& 
\big(\theta_a\, \ell_{a,a}(\mathrm{f}_a) \, {\hat\theta}^{-1}_a \big) (\psi)\ , & 
 \psi\in\pi_1^{-1}(a)\ .
\end{array}
\end{equation}
for any $b\in\tilde\Si_1(K)$ and $a\in\tilde\Si_0(K)$.
\begin{proposition}
\label{Ca:12}
Let $\cP,\hat\cP$ be  principal net bundles with local trivializations
$\theta,\hat\theta$, respectively. Then 
\begin{itemize}
\item[(i)] $\Upsilon_\theta:\rU^1(K,z_\theta)\to 
            \sU(K,\cP)$  is the  inverse   of 
             $\Gamma_\theta$.
\item[(ii)] Given   $u\in\rU^1(K,z_\theta)$ and 
          $\hat u\in\rU^1(K,{\hat z}_{\hat\theta})$, then   
          $\Upsilon_{\theta\hat\theta}: (\hat u,u)
         \to(\Upsilon_{\hat\theta}(\hat u),
                                      \Upsilon_\theta(u))$
          is the inverse  of $\Gamma_{\theta\hat\theta}$
\end{itemize}
\end{proposition}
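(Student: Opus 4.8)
The plan is to recognise that $\Gamma_\theta$ and $\Upsilon_\theta$, and likewise $\Gamma_{\theta\hat\theta}$ and $\Upsilon_{\theta\hat\theta}$, are nothing but conjugation by the trivialization maps performed in opposite directions; once both are known to land in the correct categories, the inversion is a one-line cancellation. Since Propositions \ref{Ca:2} and \ref{Ca:7} already show that $\Gamma_\theta$ sends $\sU(K,\cP)$ into $\rU^1(K,z_\theta)$ and that $\Gamma_{\theta\hat\theta}$ sends $(\hat U,U)$ into $(\hat u_{\hat\theta},u_\theta)$, the genuinely new content is twofold: that $\Upsilon_\theta(u)$ is an honest connection on $\cP$ and $\Upsilon_{\theta\hat\theta}(\mathrm f)$ an honest morphism of connections, and that the two composites are the respective identities.

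For the well-definedness in part (i) I would rewrite (\ref{Ca:11}) in cancellation-friendly form as
\[
\theta^{-1}_{\partial_0b}\,\Upsilon_\theta(u)(b)\,\theta_{\partial_1b}
 = \ell_{\partial_0b,\partial_1b}\big(u(b)\big).
\]
Then $\Upsilon_\theta(u)(b)\colon P_{\partial_1b}\to P_{\partial_0b}$ is a bijection as a composite of bijections, and is equivariant because each of its three factors intertwines the right $G$-action. The reversal axiom $\Upsilon_\theta(u)(\overline b)=\Upsilon_\theta(u)(b)^{-1}$ follows from $u(\overline b)=u(b)^{-1}$ together with the elementary identity $\ell_{a,a_1}(g)^{-1}=\ell_{a_1,a}(g^{-1})$ and the fact that reversal exchanges $\partial_0$ and $\partial_1$. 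Agreement with the net structure on the nerve holds because $u\in\rU^1(K,z_\theta)$ induces $z_\theta$, so for $b\in\Si_1(K)$ one has $u(b)=z_\theta(b)$ and hence, by (\ref{Ca:3}) applied to $Z$,
\[
\Upsilon_\theta(u)(b)
 =\theta_{\partial_0b}\,\theta^{-1}_{\partial_0b}\,Z(b)\,\theta_{\partial_1b}\,\theta^{-1}_{\partial_1b}
 =Z(b),
\]
which is the net structure. Thus $\Upsilon_\theta(u)\in\sU(K,\cP)$. The inversion is then immediate: combining (\ref{Ca:3}) and (\ref{Ca:11}) the trivializations cancel in pairs, giving $\ell_{\partial_0b,\partial_1b}(\Gamma_\theta(\Upsilon_\theta(u))(b))=\ell_{\partial_0b,\partial_1b}(u(b))$, whence $\Gamma_\theta\circ\Upsilon_\theta=\mathrm{id}$ by injectivity of $\ell_{\partial_0b,\partial_1b}$, and the same cancellation read backwards yields $\Upsilon_\theta\circ\Gamma_\theta=\mathrm{id}$.

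Part (ii) is strictly parallel. I would first check that $\Upsilon_{\theta\hat\theta}(\mathrm f)$, given fibrewise by $\theta_a\,\ell_{a,a}(\mathrm f_a)\,\hat\theta^{-1}_a$, is fibre-preserving, equivariant and compatible with the net structures — all immediate from this form — and then that it intertwines $\Upsilon_{\hat\theta}(\hat u)$ and $\Upsilon_\theta(u)$, which is precisely the cochain-morphism identity defining $\mathrm f\in(\hat u,u)$ transported through the trivializations. The mutual-inverse property follows from (\ref{Ca:9}) and (\ref{Ca:11}) by the same pairwise cancellation as above, using injectivity of $\ell_{aa}$; and by part (i) we have $u_\theta=\Gamma_\theta(\Upsilon_\theta(u))=u$ and $\hat u_{\hat\theta}=\hat u$, so the target of $\Gamma_{\theta\hat\theta}$ matches the source of $\Upsilon_{\theta\hat\theta}$ as claimed.

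I expect the only delicate point to be the bookkeeping in the well-definedness step: correctly tracking how $b\mapsto\overline b$ swaps the two faces and how this interacts with the asymmetric definition of $\ell_{a,a_1}$, and confirming that membership in $\rU^1(K,z_\theta)$ supplies both the reversal relation and the agreement with $z_\theta$ on the nerve, as recorded in the Appendix. The inversion computations themselves are purely formal.
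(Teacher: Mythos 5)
Your proposal is correct and follows essentially the same route as the paper: verify that $\Upsilon_\theta(u)$ satisfies the connection axioms (bijectivity, reversal via $\ell_{a,a_1}(g)^{-1}=\ell_{a_1,a}(g^{-1})$, agreement with $Z$ on the nerve via (\ref{Ca:3})) and that $\Upsilon_{\theta\hat\theta}(\mathrm{f})$ is an intertwining morphism, after which the mutual-inverse identities are formal cancellations of the trivializations. You are in fact slightly more explicit than the paper, which dismisses the inverse identities as following ``straightforwardly from the definitions'' and deduces net-structure compatibility in (ii) from the intertwining relation restricted to the nerve rather than directly.
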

\begin{proof}
$(i)$ Clearly $\Upsilon_\theta(u)(b)$ is a bijection 
from $\pi^{-1}(\partial_1b)$ into $\pi^{-1}(\partial_0b)$. 
Given the reverse $\overline{b}$ of $b$ we have 
\[
 \Upsilon_\theta(u)(\overline{b})  =  \theta_{\partial_0\overline{b}}\
 \ell_{\partial_0\overline{b},\partial_1\overline{b}}(u(\overline{b}))
 \  \theta^{-1}_{\partial_1\overline{b}}
 =  \theta_{\partial_1b}\,
 \ell_{\partial_1b,\partial_0b}(u(b)^{-1})  \  
 \theta^{-1}_{\partial_0b} =  \Upsilon_\theta(u)(b)^{-1}.
\]
Let $b$ be a 1--simplex of the nerve. Since 
$u(b)=z_\theta(b)$, by (\ref{Ca:3}),  we have
\[
\Upsilon_\theta(u)(b)  = 
  \theta_{\partial_{0}b} \,
 \ell_{\partial_{0b},\partial_{1}b}(z_\theta(b))
 \, \theta^{-1}_{\partial_{1}b}
 = \theta_{\partial_{0}b} \, \theta^{-1}_{\partial_{0}b} \, Z(b) \, 
     \theta_{\partial_{1}b}\, \theta^{-1}_{\partial_{1}b} 
  = J_{\partial_0b,\partial_1b}.
\]
Hence $\Upsilon_\theta(u)$ is a connection of $\cP$.   
The identities $\Gamma_\theta(\Upsilon_\theta(u)) = u$ and 
$\Upsilon_\theta(\Gamma_\theta(U)) =  U$ follow
straightforwardly from the definitions of $\Gamma_\theta$ and 
$\Upsilon_\theta$. $(ii)$ 
Given a 1--simplex $b$, we have  
\begin{align*}
 \Upsilon_{\theta\hat \theta}(\mathrm{f}) \, \Upsilon_{\hat\theta}(u)(b) & = 
\theta_{\partial_0b} \,  
\ell_{\partial_0b\partial_0b}\big(\mathrm{f}_{\partial_0b}\big) \, 
\ell_{\partial_{0}b,\partial_{1}b}\big(u(b)\big)
 \, {\theta_1}^{-1}_{\partial_{1}b} \\
& = 
\theta_{\partial_0b} \,  
\ell_{\partial_0b\partial_1b}\big(\mathrm{f}_{\partial_0b}\,  u(b)\big)
 \, {\hat\theta}^{-1}_{\partial_{1}b} \\
& = 
\theta_{\partial_0b} \,  
\ell_{\partial_0b\partial_1b}\big(u(b)\, \mathrm{f}_{\partial_1b}\big)
 \, {\hat\theta}^{-1}_{\partial_{1}b} \\
& =  \Upsilon_{\theta}(u)(b)\, \Upsilon_{\theta\hat\theta}(\mathrm{f}) \ 
\end{align*}
This, in particular,  implies that 
$\Upsilon_{\theta\hat\theta}(f)\, {\hat J}_{b} = J_{b}\, f$ 
for any $b\in \Si_1(K)$,
as any connection coincides with the flat connection when restricted 
to the nerve. Equivariance is obvious.  
\end{proof}
The previous results point to an equivalence 
between the category  of  connections 
of a principal net bundle and  the category of connection $1$--cochains 
of the corresponding bundle cocycle.
\begin{theorem}
\label{Ca:13}
Given a principal net bundle $\cP$ and a local trivialization
$\theta$. Then the categories $\sU(K,\cP)$ and 
$\rU^1(K,z_\theta)$ are isomorphic. 
\end{theorem}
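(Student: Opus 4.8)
The plan is to recognize that Theorem \ref{Ca:13} is essentially the assembly of the three preceding propositions into a single statement: Proposition \ref{Ca:2} supplies the object map, Proposition \ref{Ca:7} supplies the morphism map, and Proposition \ref{Ca:12} supplies the inverse. So I would define the functor $\Gamma_\theta\colon\sU(K,\cP)\to\rU^1(K,z_\theta)$ by sending a connection $U$ to the $1$--cochain $\Gamma_\theta(U)$, and a connection morphism $f\in(\hat U,U)$ to $\Gamma_{\theta\theta}(f)$, that is, by choosing the \emph{same} trivialization $\theta$ on source and target, which is legitimate because $\sU(K,\cP)$ involves the single bundle $\cP$. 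That the object map lands in the right category is exactly assertions $(i)$ and $(iii)$ of Proposition \ref{Ca:2}, namely $\Gamma_\theta(U)\in\rU^1(K,z_\theta)$; that the morphism map carries $(\hat U,U)$ into $(\Gamma_\theta(\hat U),\Gamma_\theta(U))$ is Proposition \ref{Ca:7} specialized to $\hat\cP=\cP$, $\hat\theta=\theta$.

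Next I would check that $\Gamma_\theta$ is genuinely a functor. For the identity, reading equation (\ref{Ca:9}) with $f=1_\cP$ gives $\ell_{aa}\big(\Gamma_{\theta\theta}(1_\cP)_a\big)=\theta^{-1}_a\,\theta_a=\mathrm{id}$, so $\Gamma_{\theta\theta}(1_\cP)_a=e$ for every $0$--simplex $a$, which is the identity arrow of $\rU^1(K,z_\theta)$. For composition, given $f_1\in(U_1,U_2)$ and $f_2\in(U_2,U_3)$, the factor $\theta^{-1}_a\theta_a=\mathrm{id}$ that appears between the two instances of (\ref{Ca:9}) cancels, yielding $\Gamma_{\theta\theta}(f_2f_1)=\Gamma_{\theta\theta}(f_2)\,\Gamma_{\theta\theta}(f_1)$. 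Hence $\Gamma_\theta$ is a covariant functor.

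It then remains to show that $\Gamma_\theta$ is an isomorphism of categories, which I would obtain directly from Proposition \ref{Ca:12}. Item $(i)$ shows that $\Upsilon_\theta$ is a two-sided inverse of $\Gamma_\theta$ on objects, so the object map is a bijection; item $(ii)$, again specialized to $\hat\theta=\theta$, shows that $\Upsilon_{\theta\theta}\colon(\hat u,u)\to(\Upsilon_\theta(\hat u),\Upsilon_\theta(u))$ is a two-sided inverse of $\Gamma_{\theta\theta}$ on each hom-set, so $\Gamma_\theta$ is bijective on arrows as well. A functor that is bijective both on objects and on every hom-set is an isomorphism of categories, its inverse being $\Upsilon_\theta$ on objects together with $\Upsilon_{\theta\theta}$ on morphisms.

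Since the object-level and morphism-level bijections and their inverses are all handed to us by the cited propositions, the only real content is the functoriality check of the second paragraph. The mildest obstacle is bookkeeping: one must confirm that $\Upsilon_\theta$ and $\Upsilon_{\theta\theta}$ themselves assemble into a functor, not merely into inverse bijections. This is automatic, since any bijective-on-objects, fully faithful functor admits a functor inverse; but if one prefers an explicit verification, the same cancellation of $\theta^{-1}_a\theta_a$ inside (\ref{Ca:11}) that established functoriality of $\Gamma_\theta$ also shows that $\Upsilon_\theta$ and $\Upsilon_{\theta\theta}$ preserve identities and composition, so that $\Upsilon_\theta$ is a genuine functorial inverse and the two categories are isomorphic.
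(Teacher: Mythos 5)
Your proposal is correct and follows essentially the same route as the paper: the paper's proof likewise assembles $\Gamma_\theta$ (objects via Proposition \ref{Ca:2}, morphisms via $\Gamma_{\theta\theta}$) and the inverse functor $\Upsilon_\theta$, $\Upsilon_{\theta\theta}$ from Proposition \ref{Ca:12}, and concludes they are mutually inverse isomorphisms of categories. The only difference is that you spell out the functoriality checks (preservation of identities and composition via the cancellation $\theta_a^{-1}\theta_a=\mathrm{id}$ in equation (\ref{Ca:9})) which the paper dismisses with ``it is easily seen''.
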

\begin{proof}
Using Propositions \ref{Ca:2} and \ref{Ca:5}, it is easily seen 
that the mappings 
$\sU(K,\cP)\ni U\to  u_\theta\in\rU^1(K,z_\theta)$ and 
$(U,\tilde U)\ni f\to \Gamma_{\theta\theta}(f)\in (u_\theta,
{\tilde u}_\theta)$,
define  a covariant 
functor from $\rU^1(K,z_\theta)$ to  $\sU(K,\cP)$.
Conversely,  by Proposition \ref{Ca:12}, 
the mappings $\rU^1(K,z_\theta)\ni u\to
\Upsilon_\theta(u)\in\sU(K,\cP)$ and 
$(u,\tilde u)\ni \mathrm{f}\to  
 \Upsilon_{\theta\theta}(\mathrm{f})\in
(\Upsilon_{\theta}(u),\Upsilon_{\theta}(\tilde u))$ 
define a covariant functor from 
$\rU^1(K,z_\theta)$ to $\sU(K,\cP)$.  
Proposition \ref{Ca:12} implies that these two
functors are isomorphisms of categories.
\end{proof}
%
%
\subsection{General equivalence}
\label{Cb}
We want  to prove
that any $1$--cocycle $z$ gives rise to a 
principal net bundle and any connection $1$--cochain 
$u\in \rU^1(K,z)$ to a connection 
of the principal net bundle associated with $z$. We will prove this
by a series  of results. The first one imitates 
the reconstruction of a principal bundle from its transition functions.
\begin{proposition}
\label{Cb:1}
Given  any $1$--cocycle $z\in\rZ^1(K,G)$ there is a
principal net bundle $\cP_z$, with structure group $G$, 
and  a local trivialization    $\theta_{z}$
such that  $\Gamma_{\theta_z}(Z) = z$, where $Z$ is the flat
connection of $\cP_z$.
\end{proposition}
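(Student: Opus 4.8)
The plan is to reconstruct the total space of the bundle directly from the cocycle $z$, using the fundamental covering $\{V_a\}$ as the gluing data, exactly as one reconstructs a fibre bundle from transition functions in the classical theory. Concretely, I would start from the disjoint union $\coprod_{a\in K}\{a\}\times V_a\times G$ and impose the equivalence relation that identifies $(a,o,g)$ with $(\tilde a, o, z_{\tilde a a}(o)\,g)$ whenever $o\in V_{a\tilde a}$, where $z_{\tilde a a}(o):= z\big((|b|;\tilde a, a)\big)$ with $|b|=o$ — that is, the value of the cocycle on the $1$--simplex of $\tilde\Si_1(K)$ supported at $o$ with faces $\tilde a$ and $a$. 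The cocycle identity for $z$ (together with its behaviour under reversal, $z(\overline b)=z(b)^{-1}$, and normalisation on degenerate simplices) is precisely what makes this relation reflexive, symmetric, and transitive, so the quotient $P$ is well defined. I would define $\pi([a,o,g]):=o$, and the right $G$--action by $[a,o,g]\cdot h:=[a,o,gh]$, which is well posed since right multiplication commutes with the left multiplication by $z_{\tilde a a}(o)$ used in the gluing.

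Next I would equip $P$ with its net structure $J$. For a nerve $1$--simplex $(\tilde o, o)$ with $o\le\tilde o$, both $o$ and $\tilde o$ lie in $V_o$ (recall $V_o$ is the smallest open containing $o$, and $o\le \tilde o$ means $\tilde o\in V_o$ only after dualising — I must be careful here and instead trivialise over a common $V_a$ containing the simplex), so I can represent fibre elements over $o$ and over $\tilde o$ in the same local chart and define $J_{(\tilde o,o)}[a,o,g]:=[a,\tilde o,g]$. The two net-bundle axioms (i) and (ii) of Definition \ref{Aa:1} then follow from functoriality of this prescription and the fact that the gluing respects the order relation; compatibility with the $G$--action, $J\,R=R\,J$, is immediate because the chart-level formula only touches the $V_a$--coordinate. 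This makes $\cP_z:=(P,\pi,J,G,K)$ a principal net bundle with structure group $G$.

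For the local trivialization I would set $\theta_{z,a}(o,g):=[a,o,g]$ for $(o,g)\in V_a\times G$. This is manifestly equivariant, fibre preserving, and a bijection onto $\pi^{-1}(V_a)$ by construction of the quotient, hence a local trivialization in the sense of Proposition \ref{B:8}. The transition functions computed from $\theta_z$ via equation (\ref{B:10}) recover exactly $z_{\tilde a a}(o)$, since $\theta_{z,\tilde a}^{-1}\theta_{z,a}(o,g)=\theta_{z,\tilde a}^{-1}[a,o,g]=\theta_{z,\tilde a}^{-1}[\tilde a,o,z_{\tilde a a}(o)g]=(o,z_{\tilde a a}(o)g)$. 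Finally, applying Lemma \ref{Ca:5}, which identifies the bundle cocycle $z_{\theta}$ with the transition functions evaluated at the support, I obtain $\Gamma_{\theta_z}(Z)(b)=z_{\partial_0b,\partial_1b}(|b|)=z(b)$ for every $b\in\tilde\Si_1(K)$, which is the desired conclusion.

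The main obstacle I expect is the well-definedness and consistency of the gluing: one must verify that the identifications are transitive across three charts (this is where the cocycle identity $z_{\hat a\tilde a}z_{\tilde a a}=z_{\hat a a}$ is essential) and that the net structure $J$ defined chart-wise does not depend on the chosen representative $a$. Both reduce to the cocycle identity and the fact that $z$ restricted to the nerve determines the order relation via $Z$; the bookkeeping is routine but is the only place where genuine verification (as opposed to definition-chasing) is needed. A minor subtlety is ensuring pathwise connectedness of $\cP_z$, which follows because $K$ is pathwise connected and the fibres are identified along every nerve edge by $J$.
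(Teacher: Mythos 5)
Your construction is essentially the paper's own proof: the paper likewise sets $z_{a\tilde a}(o):=z(o;a,\tilde a)$, glues the charts $V_a\times G$ by the equivalence relation $g=z_{aa_1}(o)\,g_1$, defines $\pi_z$, $J_z$ and the right $G$--action exactly as you do, takes $\theta_{z,a}(o,g)=[o,g,a]_z$ as the trivialization, computes $\theta_{z,a_1}^{-1}\theta_{z,a}(o,g)=(o,z_{aa_1}(o)g)$, and concludes via Lemma \ref{Ca:5}. The only step to make explicit is the chart-independence of $J_z$, which the paper settles by the local constancy of the transition functions ($z_{aa_1}(\partial_1b)=z_{aa_1}(\partial_0b)$ for comparable faces lying in $V_{aa_1}$) --- precisely the verification you correctly flag as reducing to the cocycle identity.
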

\begin{proof}
Given $z\in\rZ^1(K,G)$, for any pair $a,\tilde a$, define
\[
 z_{a\tilde a}(o)  := z(o;a,\tilde a), \qquad o\in V_{a\tilde a}
\]
where $(o;a,\tilde a)$ is the 1--simplex with support $o$, 1--face 
$\tilde a$ and 0--face  $a$. It is easily seen that the functions 
$z_{a\tilde a}: V_{a\tilde a}\rightarrow G$ satisfy 
all the properties of transition
functions (Lemma  \ref{B:11}). Let 
\[
  P := \bigcup_{a\in\tilde\Si_0(K)} V_a\times  G\times \{a\}
\]
Any element of $P$ is of the form $(o,g,a)$ where 
$a\in\tilde\Si_0(K)$, $o\in V_a$ and $g\in G$. Equip 
$P$ with the  equivalence relation 
\[
 (o,g,a) \sim_z (o_1,g_1,a_1) \ \iff \ o=o_1\in V_{aa_1}
   \mbox{ and }   g = z_{aa_1}(o)g_1.
\] 
Denote the equivalence 
class associated with the element $(o,g,a)$ by $[o,g,a]_z$, and  define
\[
 P_z 
  := \{ [o,g,a]_{z} \ | \ a\in\tilde\Si_0(K), \ o\in V_a, \ g\in G \}
\]
There is a surjective map
$\pi_z: P_z\to  K$ defined by 
$\pi_{z}([o,g,a]_z) := o$ with 
$[o,g,a]_z\in P_z$. 
Given  $b\in \Si_1(K)$, define 
\[
 {J_z}_{b}[\partial_1b,g,a]_z := [\partial_0b,g,a]_z.
\]
This definition is well posed. In fact, if $(\partial_1b,g,a)\sim_z 
(\partial_1b,g_1,a_1)$, then $g=z_{aa_1}(\partial_1b)\cdot g_1$.  
Since $\partial_1b\leq\partial_0b$
we have $z_{aa_1}(\partial_1b) = z_{aa_1}(\partial_0b)$. Hence 
$(\partial_0b,g,a)\sim_z  (\partial_0b, g_1,a_1)$. 
${J_z}_{b}$ is a bijection and the fibres are isomorphic
to  $G$. Define 
\[
  {R_z}_h [o,g,a]_z  := [o, gh,a]_z, \qquad h\in G.
\]
This is  a free right action on $P_z$  because 
$[o,g,a]_z=[o,gh,a]_z$ if, and only if, $g = gh$ 
and is transitive on the fibres.
Given $b\in \Si_1(K)$ we have 
\[
 {J_z}_{b}\, {R_z}_h  [\partial_1b,g,a]_z 
  = [\partial_0b, gh,a]_z 
   = 
   {R_z}_h[\partial_0b,g,a]_z = {R_z}_h\, {J_z}_{b}
 [\partial_1b,g,a]_z.
\]
So the data
$\cP_z := (P_z, \pi_z, J_z, G,K)$
is a principal net bundle over $K$ with structure group $G$. 
Given a $0$--simplex $a$, let  
\[
  {\theta_z}_a(o,g)  := {J_z}_{(o,a)}([a,e,a]_z)\cdot g,
 \qquad (o,g)\in V_a\times G.
\]
Observe that 
\[
 {\theta_z}^{-1}_{a_1}\, {\theta_z}_a(o,g)  = 
  {\theta_z}^{-1}_{a_1} [o,g,a]_z
  = {\theta_z}^{-1}_{a_1} [o,z_{aa_1}(o)g,a_1]_z 
  = (o,z_{aa_1}(o) g).
\]
The proof then follows by  Lemma \ref{Ca:5}.
\end{proof}
Thus the principal 
net bundle $\cP_z$ has bundle cocycle $z$. 
Now, since any connection 1--cochain induces 
a 1--cocycle, we expect that any connection 1--cochain
comes from   a connection 
on a principal net bundle. This is the content 
of the next result. \\
\indent For any $\cP\in\sP(K,G)$, choose 
a local trivialization $\theta$ of $\cP$ and denote
such a \emph{choice} by the symbol
$\ute$.  
\begin{theorem}
\label{Cb:2}
The following assertions hold: 
\begin{itemize}
\item[(i)] The categories $\rZ^1(K,G)$ and $\sP(K,G)$ are equivalent;
\item[(ii)] The categories $\rU^1(K,G)$  and $\sU(K,G)$ are equivalent.
\end{itemize}
\end{theorem}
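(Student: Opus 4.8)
The plan is to prove the stronger assertion (ii) directly, obtaining (i) afterwards by restricting to flat objects. Recall that the choice $\ute$ fixes, once and for all, a local trivialization $\theta_{\cP}$ for every $\cP\in\sP(K,G)$. Using this I would define a functor $\Gamma\colon\sU(K,G)\to\rU^1(K,G)$ sending an object $(\cP,U)$ to the connection cochain $u_{\theta_\cP}=\Gamma_{\theta_\cP}(U)$ and a morphism $f\in(\hat U,U)$ to $\Gamma_{\theta_\cP\theta_{\hat\cP}}(f)$. Proposition \ref{Ca:2}(i) guarantees $u_{\theta_\cP}\in\rU^1(K,G)$, while Proposition \ref{Ca:7} shows that the morphism assignment lands in the correct hom--set $(\hat u_{\theta_{\hat\cP}},u_{\theta_\cP})$; functoriality (preservation of identities and composition) then follows routinely by unwinding the defining equation (\ref{Ca:3}).

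The heart of the argument is to show that $\Gamma$ is fully faithful and essentially surjective, which suffices for an equivalence. For full faithfulness, fix objects $(\hat\cP,\hat U)$ and $(\cP,U)$; the action of $\Gamma$ on morphisms is the map $\Gamma_{\theta_\cP\theta_{\hat\cP}}\colon(\hat U,U)\to(\hat u_{\theta_{\hat\cP}},u_{\theta_\cP})$. Since Proposition \ref{Ca:12}(i) gives $\Upsilon_{\theta_\cP}(u_{\theta_\cP})=U$ and $\Upsilon_{\theta_{\hat\cP}}(\hat u_{\theta_{\hat\cP}})=\hat U$, Proposition \ref{Ca:12}(ii) exhibits $\Upsilon_{\theta_\cP\theta_{\hat\cP}}$ as a two--sided inverse of $\Gamma_{\theta_\cP\theta_{\hat\cP}}$, so this hom--set map is a bijection. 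For essential surjectivity, take $u\in\rU^1(K,G)$ and let $z$ be the $1$--cocycle it induces. Proposition \ref{Cb:1} produces a bundle $\cP_z$ with canonical trivialization $\theta_z$ and $z_{\theta_z}=z$, and Proposition \ref{Ca:12}(i) yields $\Upsilon_{\theta_z}(u)\in\sU(K,\cP_z)$ with $\Gamma_{\theta_z}(\Upsilon_{\theta_z}(u))=u$. As $\Gamma$ computes with the preselected $\theta_{\cP_z}$ rather than $\theta_z$, the object $(\cP_z,\Upsilon_{\theta_z}(u))$ is carried to a cochain isomorphic to $u$ via the change--of--trivialization formula (\ref{Ca:10}); hence $u$ lies in the essential image.

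To deduce (i), I would restrict $\Gamma$ to flat objects. By Lemma \ref{B:7}, $\sP(K,G)$ is isomorphic to the full subcategory $\mathscr{U}_f(K,G)\subset\sU(K,G)$ of flat connections, and $\rZ^1(K,G)$ is the full subcategory of $\rU^1(K,G)$ whose objects are $1$--cocycles (see the Appendix). Proposition \ref{Ca:2}(ii) shows that $\Gamma$ carries the unique flat connection $Z$ of $\cP$ to the bundle cocycle $z_{\theta_\cP}\in\rZ^1(K,G)$, and conversely $\Upsilon_{\theta_z}$ sends a cocycle $z=z_{\theta_z}$ back to the flat connection $Z$ of $\cP_z$ by Proposition \ref{Ca:12}(i); thus essential surjectivity onto $\rZ^1(K,G)$ is witnessed by flat objects. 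Since a fully faithful, essentially surjective functor restricts to an equivalence between matching full subcategories, $\Gamma$ restricts to an equivalence $\mathscr{U}_f(K,G)\eqv\rZ^1(K,G)$, which together with Lemma \ref{B:7} gives (i).

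The main obstacle I anticipate is bookkeeping rather than conceptual: verifying strict functoriality of $\Gamma$ (and of the assignment $\Upsilon$) from the cocycle identities, and tracking the dependence on the fixed choice $\ute$ in the essential--surjectivity step, where one must pass between the canonical trivialization $\theta_z$ of $\cP_z$ and the preselected $\theta_{\cP_z}$. All of this is governed by the change--of--trivialization formula (\ref{Ca:10}) together with the inversion identities of Proposition \ref{Ca:12}, so no homotopical input beyond these results should be needed.
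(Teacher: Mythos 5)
Your proposal is correct, and it rests on exactly the same pillars as the paper's proof: the functor you call $\Gamma$ is the paper's $\Gamma_{\ute}$, and the supporting results you invoke (Propositions \ref{Ca:2}, \ref{Ca:7}, \ref{Ca:12}, \ref{Cb:1}, formula (\ref{Ca:10}), Lemma \ref{B:7}) are precisely the ones the paper uses; the deduction of (i) from (ii) by restriction to flat objects is also identical. The difference lies in how the equivalence is certified. The paper constructs an explicit quasi-inverse functor $\Upsilon:\rU^1(K,G)\to\sU(K,G)$, defined via the disjoint-union decomposition $\rU^1(K,G)=\dot{\cup}_z\,\rU^1(K,z)$ and the reconstructed bundles $\cP_z$ with their canonical trivializations $\theta_z$, and then exhibits two explicit natural isomorphisms: $y(u):=\Gamma_{\theta_z\theta'}(\mathrm{I}_{\cP_z})$ (which is in substance your essential-surjectivity step) and the concrete bundle isomorphism $x(U)[o,g,a]_{z_\theta}:=\theta_a(o,g)$ identifying $\cP_{z_\theta}$ with $\cP$. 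You instead invoke the fully-faithful plus essentially-surjective criterion: full faithfulness comes out of Proposition \ref{Ca:12}(ii) applied fibrewise to hom-sets (a step the paper does not need to isolate), and the natural isomorphism $x$ is avoided altogether. Your route is somewhat shorter, as it dispenses with verifying well-definedness and naturality of $x$; the paper's route buys an explicit weak inverse $\Upsilon$, which is not a luxury there, since $\Upsilon$ is reused later in Section \ref{D} (the change-of-group functors $\Upsilon\,\gamma\circ\,\Gamma_{\ute}$ and the definition of reducibility, Definition \ref{Db:1}), and the isomorphism $x$ makes explicit that every principal net bundle is isomorphic to the one reconstructed from its bundle cocycle.
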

\begin{proof}
We first prove the assertion $(ii)$. Define 
\begin{equation}
\begin{array}{ll}
 \Gamma_{\ute}(U)  := \Gamma_{\theta}(U) \ , & U\in \sU(K,\cP)\ ; \\[3pt] 
 \Gamma_{\ute}(f)  := \Gamma_{\theta\hat\theta}(f) \ , & 
  f\in (\hat U,U),
\end{array}
\end{equation}
where  $U\in \sU(K,\cP)$, and $U_1\in \sU(K,\hat \cP)$.
It easily follows from  Propositions \ref{Ca:2} and \ref{Ca:7}  that 
$\Gamma_{\ute}:\sU(K,G)\to \rU^1(K,G)$ is a covariant functor.
Conversely, observe  that  $\rU^1(K,G)$ is a disjoint union 
of the $\rU^1(K,z)$ as $z$ varies in $\rZ^1(K,G)$ (see Appendix), and define 
\begin{equation}
\begin{array}{ll}
  \Upsilon(u)  := \Upsilon_{\theta_z}(u) \ , & u\in\rU^1(K,z);\\[3pt]
  \Upsilon(\mathrm{f})  := 
  \Upsilon_{\theta_{\hat z}\theta_z}(\mathrm{f}) \ , &  
  \mathrm{f}\in(u,\hat u),
\end{array}
\end{equation}
where  $u\in\rU^1(K,z)$ and $\tilde u\in\rU^1(\tilde z)$, and 
$\theta_z$ is the local trivialization of $\cP_z$ 
in Proposition \ref{Cb:1}. By
Proposition \ref{Ca:12},  
$\Upsilon$ is a covariant functor. \\
\indent We now prove that the pair $\Gamma_{\ute}$, $\Upsilon$ is 
an equivalence of categories.
To this end, consider a connection  $U$ on $\cP$  and the connection 
$\Upsilon\Gamma_{\ute}(U) = \Upsilon_{\theta_{z_\theta}}(u_\theta)$
defined on the principal 
net bundle $\cP_{z_\theta}$ reconstructed 
from the bundle cocycle $z_\theta$. 
Define 
\[
 x(U)[o,\, g, \, a]_{z_\theta}  := 
     \theta_a(o,g),  \qquad (o,\, g)\in V_a\times G \ .
\]
This definition is well posed. In fact 
if $(o,\, g,\, a)\sim_{z_\theta}(o,\, g_1,\,
a_1)$, that is, if 
$g = {z_\theta}_{aa_1}(o)g_1$, then 
$\theta_a(o, g)  =  
\theta_a(o, {z_\theta}_{aa_1}(o) g_1)  =                               
\theta_a\, \theta^{-1}_a  \,
\theta_{a_1}
(o,g_1)
=  \theta_{a_1}(o,g_1)$, 
where $\{{z_\theta}_{aa_1}\}$ are the transition functions 
associated with $z_\theta$. The definitions of 
$\Upsilon_{\theta_{z_\theta}}$ and $\Gamma_{\theta}(U)$ imply 
that  $x$ is a natural isomorphism
between 
$\Upsilon \,  \Gamma_{\ute}$ and $1_{\sU(K,G)}$.  Conversely 
given a $z\in\rZ^1(K,G)$, define  
\[
 y(u)  := \Gamma_{\theta_z \theta'}(\mathrm{I}_{\cP_z}), \qquad 
           u\in\rU^1(K,z),
\]
where $\theta'$ denotes the local trivialization of $\cP_z$ associated
with $\ute$, and $\theta_z$ that 
defined in Proposition \ref{Cb:1}. Proposition \ref{Ca:7} implies
that $y$ is a natural isomorphism 
between $\Gamma_{\ute}\,\Upsilon$ and 
$1_{\rU^1(K,G)}$.
$(i)$ follows from $(ii)$ by observing that 
$\Gamma_{\ute}:\sU_f(K,G)\to \rZ^1(K,G)$ and 
$\Upsilon:\rZ^1(K,G)\to \sU_f(K,G)$ and  that $\sU_f(K,G)$ is isomorphic
to $\sP(K,G)$ (Lemma \ref{B:7}). 
\end{proof}

%
%
\section{Curvature and reduction}
\label{D}
The equivalence between the geometrical and the cohomological description 
of principal bundles over posets allows us to analyze 
connections further. In particular,  
we will discuss: 
the curvature of a connection, relating it to  
flatness; the existence of non-flat connections; the reduction 
of principal net bundles and connections.\bigskip
\subsection{Curvature}
\label{Da}

We now introduce the curvature of 
a connection and use the notation of
(\ref{Ca:4}) to indicate the connection cochain and the bundle cocycle. 
\begin{definition}
\label{Da:1}
Let $\cP\in\sP(K,G)$ 
have a local trivialization $\theta$. 
The \textbf{curvature cochain} of a connection $U$ on $\cP$ 
is the $2$--coboundary 
$\mathrm{d}u_\theta\in\mathrm{B}^2(K,G)$ of the connection cochain
$u_\theta\in\rU^1(K,G)$, namely 
\[
\begin{array}{lcll}
(\mathrm{d} u_{\theta})_1(b) &  := & 
(\io, \  \mathrm{ad}(u_{\theta}(b)), & b\in\tilde\Si_1(K) \ , \\
(\mathrm{d}u_{\theta})_2(c)  &  := & 
 (w_{u_{\theta}}(c), \ \mathrm{ad}(u_{\theta}(\partial_1c)),
 & c\in\tilde\Si_2(K) \ ,
\end{array}
\]
where $w_{u_{\theta}}: \tilde\Si_2(K)\to G$ is defined by 
\[
 w_{u_{\theta}}(c)  := u_{\theta}(\partial_0c)\, 
 u_{\theta}(\partial_2c)\, 
 u_{\theta}(\partial_1c)^{-1}, \qquad c\in\tilde\Si_2(K) \ ,
\]
and $\mathrm{ad}(g)$ denotes the inner automorphism of $G$ 
defined by $g\in G$.
\end{definition}
Some  observations are in order. 
\emph{First}, this definition does not depend on the choice 
of local trivialization. To be precise,  
if $\theta_1$ is another  local trivialization of 
$\cP$, the two coboundaries $\mathrm{d}u_\theta$ and 
$\mathrm{d}u_{\theta_1}$  are equivalent objects 
in the category of 2--cochains (see \cite{RR06,Rob79}).
However, we prefer not to verify this as it  would involve 
the heavy machinery of  non-Abelian cohomology. 
\emph{Secondly}, in this framework, the Bianchi identity 
for a connection 1--cochain $u$ 
corresponds to the 2--cocycle identity, 
namely 
\begin{equation}
\label{Da:2}
 w_u(\partial_0d) \ w_u(\partial_2d)  = 
 \mathrm{ad}(u(\partial_{01}d))\big(w_u(\partial_3d)\big) \ 
                           w_u(\partial_1d), 
\end{equation}
for any 3--simplex $d$. \emph{Thirdly}, in \cite{RR06} a connection $u$ 
is defined to be flat whenever its curvature is trivial. 
This amounts to saying that $w_u(c) =e$ 
for any 2--simplex $c$. Now as the mapping $\Gamma_\theta$ is invertible, 
a \emph{connection $U$ is flat 
if, and only if, its curvature is trivial}. \smallskip

We now draw on two consequence of this definition of curvature 
and of the equivalence between the geometrical and the cohomological
description of principal bundles over posets.

\begin{corollary}
\label{Da:3}
There is, up to equivalence, 
    a 1-1 correspondence between flat connections of principal net
    bundles   over $K$ with structure
    group $G$  and   group homomorphisms 
    of the fundamental group $\pi_1(K)$ of the  poset  
    with values in the group $G$. 
\end{corollary}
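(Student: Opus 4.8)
The plan is to reduce the statement to the cohomology of the poset, where the identification with representations of $\pi_1(K)$ is already available, and then to transport it back through the equivalences built up in Section~\ref{C}. First I would observe that the flat connections of principal net bundles over $K$ are exactly the objects of the full subcategory $\sU_f(K,G)$ of $\sU(K,G)$; by Lemma~\ref{B:7}, assigning to each $\cP$ its unique flat connection $Z$ (Proposition~\ref{Ab:3}) identifies $\sU_f(K,G)$ with $\sP(K,G)$. Since every morphism of principal net bundles is invertible, two flat connections are equivalent precisely when the corresponding bundles are isomorphic, so the flat connections up to equivalence are the isomorphism classes of $\sP(K,G)$.

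Next I would invoke Theorem~\ref{Cb:2}(i): $\sP(K,G)$ is equivalent to the category $\rZ^1(K,G)$ of $1$--cocycles. Under this equivalence isomorphism classes of objects correspond to cohomology classes, i.e.\ to the first cohomology $\rH^1(K,G)$ of the poset. Thus this half of the argument yields a canonical bijection between flat connections up to equivalence and $\rH^1(K,G)$; this part is purely formal once the results of Section~\ref{C} are in hand.

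The substantive step is the identification of $\rH^1(K,G)$ with the homomorphisms of $\pi_1(K)$ into $G$, up to conjugacy. Here I would use that a $1$--cocycle $z$ extends to paths by $z(b_n*\cdots*b_1):=z(b_n)\cdots z(b_1)$, and that its defining $2$--simplex relation $z(\partial_0c)\,z(\partial_2c)=z(\partial_1c)$ together with $z(\overline b)=z(b)^{-1}$ is exactly the condition that $z$ be invariant under the elementary homotopies generating $\sim$ on $\tilde\Si_*(K)$. Hence $z$ descends to a functor from the homotopy groupoid $\pi_1(K)$ into $G$ regarded as a one--object groupoid; fixing a base point and restricting to the vertex group produces a homomorphism $\pi_1(K)\to G$. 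Conversely, since $K$ is pathwise connected, choosing paths from the base point to every $0$--simplex turns a homomorphism back into a cocycle, and one checks this is inverse to the previous assignment at the level of classes. A coboundary $z'(b)=f(\partial_0b)\,z(b)\,f(\partial_1b)^{-1}$ changes the induced functor by a natural isomorphism, hence alters the homomorphism only by an inner automorphism of $G$, and changing the base point has the same effect. This dictionary is already recorded for the cohomology of posets in \cite{RR06} (see also the Appendix), so I would cite it rather than redo the bookkeeping.

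I expect the main obstacle to be precisely this last dictionary: proving rigorously that a cocycle is well defined on homotopy classes of paths, and tracking the conjugacy ambiguity introduced by the base point and by coboundaries. Everything on the bundle side is formal given Theorem~\ref{Cb:2} and Lemma~\ref{B:7}; the genuine topological input---that $\rH^1(K,G)$ computes $\Hom(\pi_1(K),G)$ modulo conjugation---rests on the homotopy theory of $\tilde\Si_*(K)$ developed in Section~\ref{X} and in \cite{RR06,Ruz05}. Composing the two bijections then gives the asserted $1$--$1$ correspondence, up to equivalence on both sides.
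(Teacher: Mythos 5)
Your proposal is correct and follows essentially the same route as the paper: the paper's proof is precisely the reduction via Theorem~\ref{Cb:2} (together with the identification of flat connections with bundles implicit in Lemma~\ref{B:7}) followed by a citation of the cohomological result in \cite{RR06} identifying $1$--cocycles up to equivalence with homomorphisms $\pi_1(K)\to G$ up to conjugation. You merely unfold more of the bookkeeping (homotopy invariance of cocycles, the conjugacy ambiguity) that the paper delegates wholesale to \cite[Corollary 4.6]{RR06}.
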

\begin{proof}
By  Theorem \ref{Cb:2}  the proof follows from  
\cite[Corollary 4.6]{RR06}. 
\end{proof}
This is the poset version of a classical result of the theory 
of fibre bundles  over manifolds (see \cite{KN}). 
Note that this corollary and Theorem \ref{Cb:2}$(i)$ 
imply  that principal net bundles over a simply connected poset 
are trivial. The existence of nonflat connections is the 
content of the next result.
\begin{corollary}
\label{Da:4}
Assume that $K$ is a pathwise connected  but not totally directed 
    poset, and let $G$ be a nontrivial group.  Then, the set of
    nonflat connections of a principal net bundle
    $\cP\in\sP(K,G)$ is never empty.
\end{corollary}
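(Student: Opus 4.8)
The plan is to produce a nonflat element of $\sU(K,\cP)$ by perturbing the canonical flat connection $Z$ of $\cP$ (Proposition \ref{Ab:3}) along a single $1$--simplex lying outside the nerve, and then to detect the failure of flatness on the very $2$--simplex used in the proof of Proposition \ref{Ab:3}. Equivalently, via Theorem \ref{Cb:2} and Definition \ref{Da:1} one could exhibit a connection cochain with nontrivial curvature; but the geometric argument needs only Definitions \ref{Ab:2} and \ref{B:5} together with the freeness of the $G$--action, so I shall work directly on $\cP$.

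First I would extract the combinatorial content of the hypothesis: \emph{a pathwise connected poset that is not totally directed --- in particular, not downward directed --- contains an element $o$ lying strictly above two distinct elements $a_0\neq a_1$.} Indeed, were every element to have at most one element strictly below it, then $K$ would contain no $3$--chain, and pathwise connectedness would force it to reduce to a single minimal element together with elements each covering only that one; such a poset is downward directed, contradicting the hypothesis. Fixing such $o,a_0,a_1$, the $1$--simplex $b:=(o;a_0,a_1)$ of $\tilde\Si_1(K)$ does \emph{not} lie in $\Si_1(K)$, since its support strictly dominates both faces; here $\partial_0b=a_0$ and $\partial_1b=a_1$. I would then take the $2$--simplex $c\in\tilde\Si_2(K)$ of Proposition \ref{Ab:3}, namely $|c|=o$, $\partial_1c=b$, $\partial_2c=(o;o,a_1)$ and $\partial_0c=(o;a_0,o)$, whose faces $\partial_0c$ and $\partial_2c$ are (up to reversal) nerve $1$--simplices and are distinct from both $b$ and $\overline b$.

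Finally I would perturb $Z$. Since $G$ is nontrivial, the fibre $P_{a_0}$ carries a $G$--equivariant automorphism $\Phi\neq\mathrm{id}$; set $U(b):=\Phi\circ Z(b)$, $U(\overline b):=U(b)^{-1}$, and $U:=Z$ on every other $1$--simplex. Because $b,\overline b\notin\Si_1(K)$, conditions (i)--(ii) of Definition \ref{Ab:2} and the equivariance of Definition \ref{B:5} survive intact, so $U\in\sU(K,\cP)$. As $U$ agrees with $Z$ on the nerve faces $\partial_0c,\partial_2c$, flatness of $Z$ on $c$ yields
\[
U(\partial_0c)\,U(\partial_2c)=Z(\partial_0c)\,Z(\partial_2c)=Z(b)\neq \Phi\circ Z(b)=U(b)=U(\partial_1c),
\]
the inequality holding because $Z(b)$ is a bijection and $\Phi\neq\mathrm{id}$; hence $U$ is not flat. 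The main obstacle is precisely the combinatorial step: converting the order-theoretic hypothesis into the element $o$ sitting above two distinct points, and arranging the witnessing $2$--simplex so that the perturbed edge $b$ enters the flatness relation exactly once --- were both $b$ and $\overline b$ to occur among the faces of $c$, the modification by $\Phi$ would cancel and the curvature would vanish.
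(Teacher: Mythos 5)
Your proof is correct, and it takes a genuinely different route from the paper's: the paper's proof is a one-line reduction via the equivalence of categories of Theorem \ref{Cb:2} to \cite[Theorem 4.25]{RR06}, so both the combinatorial content of ``not totally directed'' and the construction of a nonflat object are outsourced to that reference, whereas you build the nonflat connection directly on $\cP$. Your construction is sound: with $a_0\neq a_1$ strictly below $o$, neither $b=(o;a_0,a_1)$ nor $\overline{b}$ lies in $\Si_1(K)$, so setting $U=Z$ off $\{b,\overline{b}\}$, $U(b)=\Phi\circ Z(b)$ and $U(\overline{b})=U(b)^{-1}$ respects Definitions \ref{Ab:2} and \ref{B:5}, and a nontrivial equivariant $\Phi$ exists precisely because $G\neq\{e\}$ acts freely and transitively on the fibres; your insistence on $a_0\neq a_1$ is exactly the right precaution, since for $b=\overline{b}$ condition (i) of Definition \ref{Ab:2} would force $U(b)$ to be an involution, which need not exist nontrivially. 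Two comments. First, your closing computation on the $2$--simplex $c$ is redundant: once $U\neq Z$ is known, non-flatness is immediate from Proposition \ref{Ab:3} (uniqueness of the flat connection), and your verification merely re-proves that uniqueness in this instance. Second, the one fragile point is the phrase ``in particular, not downward directed'': ``totally directed'' is never defined in this paper, and your argument needs the implication that downward directedness entails total directedness. This holds for the notion used in \cite{RR06} (any two elements admit a common upper bound or a common lower bound), and some such reading is in fact forced for the corollary to be true at all: a poset consisting of one minimal element with several mutually incomparable elements above it is downward directed, yet on it every connection is flat whenever $G$ has no $2$--torsion, so such posets must count as ``totally directed''. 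Overall, your approach buys a self-contained, elementary argument that exposes the mechanism --- a connection is constrained only on the nerve and by reversal, so any $1$--simplex whose support strictly dominates two distinct faces leaves room for curvature --- while the paper's citation buys brevity and uniformity with its policy of proving everything through the cohomological dictionary.
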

\begin{proof}
By Theorem \ref{Cb:2}, the proof follows from  
\cite[Theorem 4.25]{RR06}. 
\end{proof}

%
%
\subsection{Reduction theory}
\label{Db}
The cohomological representation allows us 
to compare principal bundles having different structure groups.     
Any group homomorphism $\gamma:H\to G$ defines 
a  covariant functor 
$\gamma\circ:\rC^1(K,H)\to \rC^1(K,G)$ (see Appendix). This  
functor maps  1--cocycles and connection
1--cochains with values in $H$
into  1--cocycles and connection 1--cochains with values
in $G$. 
Moreover ${\ga\circ}$ turns out to be an isomorphism whenever $\ga$ 
is a group isomorphism. So using the functors 
$\Upsilon$ and 
$\Gamma_{\ute}$ introduced in the proof 
of Theorem \ref{Cb:2}, we have that 
\[
  \Upsilon \, {\gamma\circ} \, 
   \Gamma_{\ute}: \sU(K,H)\to \sU(K,G)
\]
and, clearly, 
\[
 \Upsilon\, {\gamma\circ} \, \Gamma_{\ute}: \sP(K,H)\to \sP(K,G)
\]
are covariant functors and even equivalences of categories  
when $\ga\circ$ is a group isomorphism.\\ 
\indent We now define 
reducibility both for connections and principal net 
bundles.
\begin{definition}
\label{Db:1}
A \textbf{connection} $U$ on a principal net bundle 
$\cP\in\sP(K,G)$ is said to be \textbf{reducible},
if there is a proper subgroup $H\subset G$, a 
principal net bundle $\tilde \cP\in\sP(K,H)$ and 
a connection $\tilde U\in \sU(K,\tilde \cP)$ such that 
\begin{equation}
\Upsilon_{\theta_{{\tilde z}_\phi}} \, 
{\io_{\sst{G,H}}}\, 
\Gamma_{\phi}(\tilde U) \cong U \ .
\end{equation}
for a local trivialization $\phi$ of $\tilde \cP$.
A \textbf{principal net bundle} $ \cP$ is \textbf{reducible} 
if its flat connection is reducible. 
\end{definition}
In this definition $\io_{\sst{G,H}}:H\to G$
is the inclusion mapping. So, 
recalling the notation  (\ref{Ca:4}),
$\Upsilon_{\theta_{{\tilde z}_{\phi}}} \, 
{\io_{\sst{G,H}}}\, 
\Gamma_{\phi}(\tilde U)$  is the reconstruction 
of a connection from the connection 1--cochain 
${\tilde u}_{\phi}\in \rU^{1}(K,H)$ considered as 
taking values in the larger group $G$.
$\theta_{\tilde z_{\phi}}$ is the local trivialization 
of the principal net bundle reconstructed from the bundle cocycle 
${\tilde z}_{\phi}$ 
(see Proposition \ref{Cb:1}) considered as a cocycle with
values in $G$.
Moreover, the bundle cocycle ${\tilde z}_\phi$ is equal to 
the 1--cocycle induced by $\tilde u_\phi$ (Proposition \ref{Ca:2}).
Finally, by Theorem \ref{Cb:2} this definition 
is independent of the local trivialization. \smallskip

We now derive an analogue of the Ambrose-Singer theorem 
for principal bundles and connections over posets. 
This has already been done in the cohomological approach \cite[Theorem 4.28]{RR06}.  
The next two lemmas are needed for extending this result 
to fibre bundles.
\begin{lemma} 
\label{Db:2}
A connection 
$U$ on $\cP\in\sP(K,G)$ is reducible if, and only if, 
the connection cochain $u_\theta$ of a local trivialization 
$\theta$ is reducible.   
\end{lemma}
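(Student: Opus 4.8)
The plan is to prove the equivalence by unwinding both notions of reducibility through the cohomological dictionary established in Theorem \ref{Cb:2} and Definition \ref{Db:1}. First I would fix a local trivialization $\theta$ of $\cP$ and set $u_\theta := \Gamma_\theta(U)$, so that $U = \Upsilon_\theta(u_\theta)$ by Proposition \ref{Ca:12}. The key observation is that Definition \ref{Db:1} already phrases the reducibility of $U$ entirely in terms of a connection $1$--cochain $\tilde u_\phi \in \rU^1(K,H)$ pushed forward into $G$ via $\io_{\sst{G,H}}$; the corresponding cohomological notion---reducibility of the cochain $u_\theta$---should mean exactly that $u_\theta$ is equivalent in $\rU^1(K,G)$ to a cochain of the form $\io_{\sst{G,H}}\circ v$ for some proper subgroup $H \subset G$ and some $v \in \rU^1(K,H)$.

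The forward direction ($U$ reducible $\Rightarrow u_\theta$ reducible) is essentially immediate from Definition \ref{Db:1}: the isomorphism $\Upsilon_{\theta_{\tilde z_\phi}}\,\io_{\sst{G,H}}\,\Gamma_\phi(\tilde U) \cong U$ of connections, transported through the functor $\Gamma_{\ute}$, yields that $u_\theta$ is equivalent to $\io_{\sst{G,H}}\circ \tilde u_\phi$ in $\rU^1(K,G)$, which is precisely the reducibility of the cochain. Here I would use that $\Gamma$ is a functor (Proposition \ref{Ca:7}) sending isomorphic connections to equivalent cochains, together with the fact from Theorem \ref{Cb:2} that $\Upsilon$ and $\Gamma_{\ute}$ are mutually inverse equivalences, so applying $\Gamma_{\ute}$ to the defining isomorphism and using $\Gamma_{\ute}\Upsilon \cong 1$ collapses the left-hand side to $\io_{\sst{G,H}}\circ\tilde u_\phi$ up to equivalence.

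For the converse ($u_\theta$ reducible $\Rightarrow U$ reducible), suppose $u_\theta$ is equivalent to $\io_{\sst{G,H}}\circ v$ for some $v \in \rU^1(K,H)$. I would then let $\tilde\cP := \cP_{z_v}$ be the principal $H$--net bundle reconstructed from the induced cocycle $z_v \in \rZ^1(K,H)$ via Proposition \ref{Cb:1}, equip it with the connection $\tilde U := \Upsilon_{\theta_{z_v}}(v)$, and verify directly that the composite $\Upsilon_{\theta_{\tilde z_\phi}}\,\io_{\sst{G,H}}\,\Gamma_\phi(\tilde U)$ reproduces $U$ up to the equivalence $\cong$. The point is that $\Gamma_\phi(\tilde U)$ recovers $v$ (by Proposition \ref{Ca:12}(i)), pushing forward gives $\io_{\sst{G,H}}\circ v \sim u_\theta$, and applying $\Upsilon$ returns $U$ up to isomorphism since $\Upsilon\Gamma_{\ute} \cong 1$.

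The main obstacle is bookkeeping the local trivializations: the two sides of the reducibility equation in Definition \ref{Db:1} are trivialized by different choices ($\phi$ on $\tilde\cP$, $\theta_{\tilde z_\phi}$ on the reconstructed $G$--bundle, and $\theta$ on $\cP$), and I must confirm that the equivalence class of the resulting cochain is independent of these choices. This is exactly what Theorem \ref{Cb:2} and formula (\ref{Ca:10}) guarantee---changing a local trivialization replaces a connection cochain by an equivalent one---so the argument reduces to checking that every comparison map I invoke is realized by a change-of-trivialization morphism $\Gamma_{\theta\phi}(1)$, after which the equivalence $\cong$ on connections matches the equivalence of cochains under $\Gamma_{\ute}$. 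I expect no genuinely new computation beyond assembling these functorial facts, so the heart of the proof is organizing the dictionary correctly rather than any hard estimate.
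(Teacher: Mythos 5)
Your proposal is correct and takes essentially the same route as the paper: the paper proves the direction ($u_\theta$ reducible $\Rightarrow$ $U$ reducible) by realizing the reducing cochain $v$ as $\tilde u_{\tilde\theta}$ for a connection on an $H$--bundle via Theorem \ref{Cb:2}, noting that the induced bundle cocycles are then equivalent, and applying Proposition \ref{Ca:12}$(ii)$ to conclude $\Upsilon_{\theta_{\tilde z_{\tilde\theta}}}(\tilde u_{\tilde\theta}) \cong \Upsilon_{\theta_{z_\theta}}(u_\theta) = U$, declaring the converse to follow similarly. Your packaging of both directions through the natural isomorphisms $\Gamma_{\ute}\,\Upsilon \cong 1$ and $\Upsilon\,\Gamma_{\ute} \cong 1$, together with the change-of-trivialization formula (\ref{Ca:10}), is exactly the same dictionary the paper invokes, just phrased slightly more functorially.
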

\begin{proof}
$(\Rightarrow)$ Assume that $u_\theta$  is reducible.
Then there is a proper subgroup 
$H$ of $G$ and a connection 1--cochain $ v\in\rU^1(K,H)$ 
such that $v \cong u_\theta$ in the category $\rU^1(K,G)$. 
By virtue of Theorem \ref{Cb:2}, 
there is a connection $\tilde U\in\sU(K,\tilde \cP)$ with 
$\tilde \cP\in\sP(K,H)$ such that $\tilde u_{\tilde\theta} = v$
for a local trivialization $\tilde\theta$ of $\tilde \cP$. Hence 
${\tilde u}_{\tilde\theta}\cong u_\theta$ in $\rU^1(K,G)$.
The  corresponding bundle cocycles
${\tilde z}_{\tilde\theta}$ and $z_{\theta}$ 
are then equivalent in $\rZ^1(K,G)$ \cite[Lemma 4.14]{RR06}. 
By Proposition \ref{Ca:12}$(ii)$ we have
\[ 
\Upsilon_{\theta_{{\tilde z}_{\tilde\theta}}}
({\tilde u}_{\tilde\theta}) \cong 
\Upsilon_{\theta_{z_{\theta}}} (u_\theta) = U.
\]
This completes the proof.
The implication $(\Leftarrow)$ follows similarly.
\end{proof}
We next relate the holonomy groups of a 
connection to those of the corresponding connection cochain.
The holonomy group $H_u(a)$ and the restricted holonomy 
group $H^0_u(a)$, based on a 0--simplex $a$,
of a connection 1--cochain $u$ of $\rU^1(K,G)$ are  defined,
respectively,  by 
$H_u(a) =  \big\{  u(p)\in G \ |  \ \partial_0p=a=\partial_1p\big\}$ and 
$H^0_u(a) =  \big\{  u(p)\in G \ | \ \partial_0p=a=\partial_1p, 
\ p\sim\si_0a\big\}$ 
(see \cite{RR06}). Then we have the following 
\begin{lemma}
\label{Db:3}
Let $U\in\sU(\cP)$ and $\psi\in\cP$ with $\pi(\psi)=a$.  
Given  a local trivialization  $\theta$ of $\cP$, 
$H_U(\psi)$ and  $H_{u_\theta}(a)$  are conjugate subgroups of $G$.  
The same assertion holds for the restricted holonomy groups. 
\end{lemma}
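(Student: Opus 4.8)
The plan is to express the parallel transport $U(p)$ around a loop in the local coordinates supplied by $\theta$ and to read off its value as left translation by the group element $u_\theta(p)$. First I would fix, once and for all, the unique $g_0\in G$ determined by $\psi=\theta_a(a,g_0)$; this exists and is unique because $\theta_a(a,\cdot)$ is a bijection of $\{a\}\times G$ onto the fibre $P_a$.

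Next I would establish the key identity
\[
\theta_a^{-1}\, U(p)\, \theta_a = \ell_{a,a}\big(u_\theta(p)\big)
\]
for every loop $p$ with $\partial_0p=a=\partial_1p$. This follows by telescoping the defining relation (\ref{Ca:3}), namely $\ell_{\partial_0b,\partial_1b}(u_\theta(b))=\theta_{\partial_0b}^{-1}U(b)\theta_{\partial_1b}$, over the $1$--simplices $b_n,\ldots,b_1$ composing $p$. Writing $p=b_n*\cdots*b_1$ and using $\partial_0b_i=\partial_1b_{i+1}$, the intermediate factors $\theta_{\partial_1b_{i+1}}\theta_{\partial_0b_i}^{-1}$ reduce to the identity, while the composition law $\ell_{a,b}(g)\,\ell_{b,c}(h)=\ell_{a,c}(gh)$ for the translation maps turns the product of the $\ell_{\partial_0b_i,\partial_1b_i}(u_\theta(b_i))$ into $\ell_{a,a}(u_\theta(p))$, where $u_\theta(p):=u_\theta(b_n)\cdots u_\theta(b_1)$. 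Since both $U$ and $u_\theta$ are multiplicative along paths by definition, the two sides agree.

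With this in hand I would translate the defining condition of $H_U(\psi)$. By equivariance of $\theta_a$ one has $\psi\cdot g=\theta_a(a,g_0g)$, while applying the identity above to the argument $(a,g_0)$ gives $U(p)\psi=\theta_a(a,u_\theta(p)g_0)$. As $\theta_a(a,\cdot)$ is injective, the holonomy condition $\psi\cdot g=U(p)\psi$ is equivalent to $g_0g=u_\theta(p)g_0$, that is, to $g=g_0^{-1}u_\theta(p)g_0$. Letting $p$ range over all loops based at $a$ then yields
\[
H_U(\psi)=g_0^{-1}\,H_{u_\theta}(a)\,g_0,
\]
so the two groups are conjugate. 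The statement for the restricted holonomy groups is obtained by the identical argument, restricting $p$ to loops with $p\sim\si_0a$; this restriction passes unchanged through each equivalence above.

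I expect the only delicate point to be the bookkeeping in the telescoping step: one must track the face matching $\partial_0b_i=\partial_1b_{i+1}$ along the path and verify the composition rule for the maps $\ell$, after which the conclusion is forced. No genuine obstacle arises, since the equivalence of categories established in Theorem \ref{Ca:13} already guarantees that $U$ and $u_\theta$ carry the same information; this lemma is essentially its pointwise, base-point--dependent shadow, with the choice of $g_0$ accounting for the conjugation.
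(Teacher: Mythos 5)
Your proof is correct and follows essentially the same route as the paper's: both fix the group element $g_0$ (the paper's $g_\psi$) with $\psi=\theta_a(a,g_0)$, telescope the defining relation (\ref{Ca:3}) along the loop $p$ by inserting $\theta_{\partial_0b_i}\theta_{\partial_0b_i}^{-1}$ between consecutive transports to get $\theta_a^{-1}U(p)\theta_a=\ell_{a,a}(u_\theta(p))$, and then use equivariance and injectivity of $\theta_a$ to conclude $H_U(\psi)=g_0^{-1}H_{u_\theta}(a)\,g_0$. The treatment of the restricted holonomy groups is likewise identical.
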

\begin{proof}
If $g\in H_U(\psi)$, then there is a loop $p$ such that 
$\partial_0p=a=\partial_1p$ and $R_g(\psi) = U(p)\psi$. Now, let  
$(a,g_\psi):= \theta_a^{-1}\psi$, then 
\begin{align*}
(a,g_\psi g) & = r(g)\, \theta^{-1}_a \psi =  \theta_a^{-1} R_g\psi
              = \theta^{-1}_a\, U(p)\psi\\
           &   = \theta^{-1}_a\, U(b_n)\cdots U(b_1)\psi\\
           &   = \theta^{-1}_a\, U(b_n)\theta_{\partial_1b_n} \theta^{-1}_{\partial_1b_n}   \cdots  U(b_2)\theta^{-1}_{\partial_0b_1} 
\theta_{\partial_0b_1} U(b_1)\theta_a \theta^{-1}_a\psi\\ 
           &   = \ell_{a,\partial_1b_n}(u_\theta(b_n))\cdots 
                  \ell_{\partial_0b_1,a}(u_\theta(b_1)) \theta^{-1}_a\psi\\ 
&            = \ell_{a,a}(u_\theta(p)) \theta^{-1}_a\psi \\
           & = (a, \ u_\theta(p) g_\psi)
\end{align*} 
The mapping 
$H_U(\psi)\ni g\to  g_\psi g g_\psi^{-1}\in H_{u_\theta}(a)$
is clearly a group isomorphism. 
\end{proof}
This result  and \cite[Lemma 4.27]{RR06} lead  to the
following conclusions: the holonomy groups are subgroups of $G$; 
the restricted group  is a normal subgroup of the unrestricted group;
a change of the base $\psi$ leads to conjugate holonomy groups
and equivalent connections lead to isomorphic holonomy groups.\smallskip 
 
Finally, we have the analogue of the Ambrose-Singer theorem for
principal net bundles.
\begin{theorem}
\label{Db:4}
Given $\cP\in\sP(K,G)$, let $\psi\in \cP$.  
Let  $U\in \sU(K,\cP)$. Then 
\begin{itemize}
\item[(i)] $\cP$  is reducible to a principal net bundle $\tilde \cP\in
\sP(K,H_U(\psi))$. 
\item[(ii)] $U$ is reducible to a connection $\tilde U\in
\sU(K,\tilde \cP)$.
\end{itemize}
\end{theorem}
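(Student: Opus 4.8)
The plan is to deduce the geometric statement from the cohomological Ambrose--Singer theorem \cite[Theorem 4.28]{RR06} by transporting it across the equivalence of categories of Theorem \ref{Cb:2}, using Lemmas \ref{Db:2} and \ref{Db:3} as the dictionary between the geometric holonomy group $H_U(\psi)$ and its cohomological counterpart $H_{u_\theta}(a)$.

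First I would fix a local trivialization $\theta$ of $\cP$ and pass to the connection cochain $u_\theta\in\rU^1(K,G)$ together with its induced bundle cocycle $z_\theta$. Setting $a:=\pi(\psi)$, Lemma \ref{Db:3} identifies $H_U(\psi)$ with a subgroup conjugate to $H_{u_\theta}(a)$; concretely, the element $g_\psi$ determined by $(a,g_\psi)=\theta_a^{-1}\psi$ conjugates $H_{u_\theta}(a)$ onto $H_U(\psi)$, so that $\mathrm{ad}(g_\psi)$ is a group isomorphism $H_{u_\theta}(a)\to H_U(\psi)$.

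Next I would apply \cite[Theorem 4.28]{RR06} to the cochain $u_\theta$: it asserts that $u_\theta$ reduces to a connection cochain $v\in\rU^1(K,H_{u_\theta}(a))$, meaning that the image of $v$ in $\rU^1(K,G)$ under the inclusion $H_{u_\theta}(a)\hookrightarrow G$ is equivalent to $u_\theta$. Composing with the functor induced by the inner isomorphism $\mathrm{ad}(g_\psi)$ (an instance of the functor $\gamma\circ$ of Section \ref{Db}) produces a connection cochain $v'\in\rU^1(K,H_U(\psi))$ whose image in $\rU^1(K,G)$ is still equivalent to $u_\theta$. Thus $u_\theta$ is reducible, with reducing group exactly $H_U(\psi)$.

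Finally I would translate back. By Proposition \ref{Cb:1} the bundle cocycle induced by $v'$ yields a principal net bundle $\tilde\cP\in\sP(K,H_U(\psi))$ with a canonical local trivialization, and by Proposition \ref{Ca:12} the cochain $v'$ yields a connection $\tilde U:=\Upsilon(v')\in\sU(K,\tilde\cP)$. That these data satisfy the defining relation of Definition \ref{Db:1}, giving the reduction of $U$ to $\tilde U$ in part (ii) and of $\cP$ to $\tilde\cP$ in part (i), is precisely the forward implication of Lemma \ref{Db:2}, which converts the reducibility of $u_\theta$ into the reducibility of $U$. The one point demanding care, and the main obstacle, is the bookkeeping in the conjugation step: one must verify that applying $\mathrm{ad}(g_\psi)$ lands the reduced cochain in $H_U(\psi)$ itself rather than in the merely conjugate $H_{u_\theta}(a)$, and that this identification is compatible with the functors $\Gamma_{\ute}$ and $\Upsilon$ of Theorem \ref{Cb:2}; everything else is the formal transport of an already established cohomological result.
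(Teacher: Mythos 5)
Your proposal is correct and takes essentially the same route as the paper, whose entire proof is the combination of Lemma \ref{Db:2}, Lemma \ref{Db:3} and \cite[Theorem 4.28]{RR06} that you spell out in detail. The only slip is directional: by Lemma \ref{Db:3} it is $\mathrm{ad}(g_\psi)$ that carries $H_U(\psi)$ onto $H_{u_\theta}(a)$, so the functor you compose with should be the one induced by $\mathrm{ad}(g_\psi^{-1})$ --- a harmless relabelling (the constant morphism with value $g_\psi^{-1}$ shows the conjugated cochain remains equivalent to $u_\theta$ in $\rU^1(K,G)$) that does not affect the argument.
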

\begin{proof}
The proof follows from the previous lemmas 
and  \cite[Theorem 4.28]{RR06}.
\end{proof}

\section{\v Cech cohomology and flat bundles}
\label{E}

In the previous sections, we introduced the transition functions
of a (principal) net bundle, and verified that they
satisfy cocycle identities analogous to those 
encountered in the \v Cech cohomology of a topological space.
This allows us to define the 
{\em \v Cech cohomology} of  a poset,  showing  it to be equivalent to the 
net cohomology of the poset.
Then, we specialize our discussion to the 
poset of open, contractible subsets of a manifold. In this case, 
the above constructions yield 
the locally constant cohomology of the manifold,
which, as is well known, describes the category of flat
bundles (see  \cite[I.2]{Kob}).\bigskip

A \emph{ \v Cech cocycle} of  $K$ with values in $G$,  is  a family 
$\xi :=$ $\left\{ \xi_{aa'} \right\}$ of locally constant 
maps $\xi_{aa'} : V_{aa'} \to G$ satisfying the cocycle relations
\[
\xi_{\hat a\tilde a} (o) \xi_{\tilde aa} (o) =
\xi_{\hat aa} (o), \qquad o\in V_{a\tilde a\hat a}.
\]
A cocycle $\xi'$ is said to be \emph{equivalent} 
to $\xi$ if there is a family 
$u :=$ $\left\{ u_a \right\}$ 
of locally constant maps $u_a : V_a \to G$ such 
that $\xi'_{a\tilde a} (o) u_{\tilde a} (o) =$
$u_a (o)\xi_{a\tilde a} (o)$,
$o\in V_{a\tilde a}$. We denote the set of \v Cech cocycles and their 
equivalence classes  by $\rZ_c^1 ( K , G )$ and $\rH_c^1 ( K , G)$,
respectively. In the sequel, by a slight abuse of notation,  we 
use the same symbols to denote cocycles and the corresponding 
equivalence classes.\smallskip

Let us now consider the dual poset $K^\circ$ and its  
fundamental covering $\cV^\circ_0$.
Note  that $o\in V^\circ_a$ if and only if $a\leq^\circ o$,
or equivalently if $o\leq a$. The aim is to establish 
a correspondence between $\rH^1(K,G)$, $\rH^1_{\bf c}(K^\circ,G)$ and 
$\rH^1(K^\circ,G)$. Given $z\in \rZ^1(K,G)$, define 
\begin{equation}
 z^{\bf c}_{a\tilde a}(o) :=  z(a;a,o) \, z(\tilde a;\tilde a, o)^{-1},
 \qquad o\in V^\circ_{a\tilde a}.
\end{equation}
The definition is well posed since $a,\tilde a\leq^\circ o$ 
implies that  $(a;a,o)$ and $(\tilde a;\tilde a, o)$ are 
1--simplices of the nerve of $K$. 
Following the reasoning  of \cite[Lemma 4.10]{RR06}, 
$z^{\bf c}$ turns out to be  
the unique \v Cech cocycle of $\rZ^1_{\bf c}(K^\circ,G)$ 
satisfying the equation 
$z^{\bf c}_{\partial_1b,\partial_0b}(\partial_0b)= 
            z(\partial_1b;\partial_1b, \partial_0b)$ for
any 1--simplex of the nerve of $K^\circ$. Thus we have an injective map 
\begin{equation}
\label{eq_c}
\rH^1 (K,G)  \to  \rH_c^1(K^\circ,G), \ \ \
z \mapsto z^{\bf c},
\end{equation}
since any $1$--cocycle is uniquely determined by its values on $\Sigma_1(K)$.
By applying \cite[Theorem 4.12]{RR06}, we also construct a 
bijective map
\begin{equation}
\label{eq_n}
\rH_c^1 (K^\circ,G)  \to  \rH^1(K^\circ,G), \ \ \ 
\xi \mapsto \xi^{\bf n}, 
\end{equation}
where $\xi^{\bf n} (b) :=$  $\xi_{\partial_0b , \partial_1b} (|b|)$, 
$b \in \tilde\Si_1(K^\circ)$ and  $\xi \in \rZ_c^1(K^\circ,G)$. 
If we compose (\ref{eq_c})
and (\ref{eq_n}), then we obtain a injective  map
\begin{equation}
\label{eq_circ}
\rH^1 ( K,G ) \to \rH^1 (K^\circ,G), \ \ \ 
z \mapsto z^\circ := (z^{\bf c})^{\bf n}\ .
\end{equation}
We now observe that  $z^{\circ \circ} = z$, 
$z \in \rH^1 ( K,G )$, implying that (\ref{eq_circ}) and hence 
(\ref{eq_c}) are bijective. First note that 
\[
  z^\circ (b) = z(\partial_0b;\partial_0b, |b|)\, 
                z(\partial_1b;\partial_1b,|b|)^{-1},
\qquad  b\in\tilde\Si_1(K^\circ).
\]
So, given $b\in\tilde\Si_1(K)$ we have 
\begin{align*}
  z^{\circ\circ} (b) & = z^{\circ }(\partial_0b;\partial_0b, |b|)\, 
                z^{\circ}(\partial_1b;\partial_1b,|b|)^{-1}\\
        & = z(\partial_0b; \partial_0b, \partial_0b)
         \, z(|b|;|b|,\partial_0b)^{-1}\,
         (z(\partial_1b; \partial_1b, \partial_1b)
         \, z(|b|;|b|,\partial_1b)^{-1})^{-1}\\
        & = z(\si_0\partial_0b)
         \, z(|b|;|b|,\partial_0b)^{-1}\,
         (z(\si_0\partial_1b)
         \, z(|b|;|b|,\partial_1b)^{-1})^{-1}\\
        & =  z(|b|;\partial_0b, |b|) \, z(|b|;|b|,\partial_1b)\\
        & =  z(b),
\end{align*}
where we have applied the 1--cocycle identity to the last equality. 
The following result summarizes the previous considerations 
and incorporates an immediate consequence of Theorem \ref{Cb:2}.
\begin{theorem}
\label{E:4.0}
The maps $z \mapsto z^{\bf c}$, $\xi \mapsto \xi^{\bf n}$ induce 
one-to-one correspondences 
\[
\rH^1 (K,G) \to \rH_c^1 (K^\circ,G)
\ \ , \ \
\rH_c^1 (K^\circ,G) \to \rH^1 (K^\circ,G)
\ ,
\]
defined so that, exchanging the role of $K$ and $K^\circ$,
the diagram
\begin{equation}
\label{eq_cdd}
\xymatrix{
    \rH^1 ( K,G )
    \ar[r]
 &  \rH_c^1 ( K^\circ,G )
    \ar[d]
 \\ \rH_c^1 ( K,G )
    \ar[u]
 &  \rH^1 ( K^\circ,G )
    \ar[l]
}
\end{equation}
commutes. Moreover, the map (\ref{eq_circ}) induces an isomorphism
of categories from $\sP(K,G)$ to $\sP(K^\circ,G)$.
\end{theorem}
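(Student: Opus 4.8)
The plan is to treat the three assertions in turn, drawing on the computations already assembled before the statement for the first two and reserving the genuine work for the categorical claim. The bijectivity of the individual maps is essentially done: the injectivity of $z\mapsto z^{\bf c}$ is the map (\ref{eq_c}), the bijectivity of $\xi\mapsto\xi^{\bf n}$ is (\ref{eq_n}), furnished by \cite[Theorem 4.12]{RR06}, and the identity $z^{\circ\circ}=z$ established just above shows that the composite (\ref{eq_circ}) is a bijection; since $z\mapsto z^\circ=(z^{\bf c})^{\bf n}$ factors through $z\mapsto z^{\bf c}$, the latter is bijective as well. The commutativity of (\ref{eq_cdd}) is then a reformulation of the same identity: reading the four arrows of the square as $z\mapsto z^{\bf c}$, then $\xi\mapsto\xi^{\bf n}$ over $K^\circ$, then $z\mapsto z^{\bf c}$ and $\xi\mapsto\xi^{\bf n}$ over $K$, traversing the loop once sends a class to $z^{\circ\circ}=z$, so the square commutes.

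For the categorical statement the strategy is to promote the bijection $z\mapsto z^\circ$ from objects to a functor and then to exploit the equivalence $\sP(K,G)\simeq\rZ^1(K,G)$ of Theorem \ref{Cb:2}. It suffices to produce an isomorphism of categories $\rZ^1(K,G)\to\rZ^1(K^\circ,G)$ acting on objects by $z\mapsto z^\circ$. On objects I use the explicit formula $z^\circ(b)=z(\partial_0b;\partial_0b,|b|)\,z(\partial_1b;\partial_1b,|b|)^{-1}$, which is a genuine map of cocycles, not merely of classes. For morphisms I observe that $\tilde\Si_0(K)=\tilde\Si_0(K^\circ)=K$, so the $0$--cochain $f=\{f_a\}$ underlying an arrow of $\rZ^1(K,G)$ may be reused verbatim over $K^\circ$; I set $f^\circ:=f$.

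The one computation to carry out is that $f^\circ$ still intertwines the transported cocycles. Writing the arrow condition as $z'(o;a,\tilde a)=f_a\,z(o;a,\tilde a)\,f_{\tilde a}^{-1}$ on nerve $1$--simplices and substituting into the formula for $(z')^\circ(b)$ with $a=\partial_0b$, $\tilde a=\partial_1b$, $o=|b|$, the two middle factors $f_{|b|}^{-1}f_{|b|}$ cancel and one is left with $(z')^\circ(b)=f_{\partial_0b}\,z^\circ(b)\,f_{\partial_1b}^{-1}$; since cocycles and arrows are determined by their values on the nerve, this suffices. Functoriality is immediate, because composition of arrows is pointwise multiplication of $0$--cochains, which is insensitive to the passage from $K$ to $K^\circ$. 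The functor is bijective on objects by $z^{\circ\circ}=z$ and is the identity on each hom-set $\rC^0(K,G)=\rC^0(K^\circ,G)$, hence an isomorphism of categories.

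Finally, conjugating this isomorphism by the functors $\Gamma_{\ute}$ and $\Upsilon$ of Theorem \ref{Cb:2}, that is forming $\Upsilon\circ(\,\cdot\,)^\circ\circ\Gamma_{\ute}$, transports it to the asserted functor $\sP(K,G)\to\sP(K^\circ,G)$. The main obstacle I anticipate is the bookkeeping of the morphism data rather than any hard inequality: one must pin down from the appendix that the arrows of $\rZ^1(K,G)$ and $\rZ^1(K^\circ,G)$ are literally the same $0$--cochains and that the intertwining relation survives the passage to the dual poset, which is the $f_{|b|}$ cancellation above. One should also be candid that, since Theorem \ref{Cb:2} supplies equivalences rather than strict isomorphisms $\sP\to\rZ^1$, the cleanest honest statement at the level of principal net bundles is an equivalence realizing the cocycle isomorphism $z\mapsto z^\circ$.
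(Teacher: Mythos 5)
Your proposal is correct and follows essentially the same route as the paper, which offers no separate proof: the bijections and the commuting square are exactly the computations preceding the statement (culminating in $z^{\circ\circ}=z$), and the categorical claim is obtained by transporting $z\mapsto z^\circ$ through Theorem \ref{Cb:2}. Your explicit lift of $z\mapsto z^\circ$ to a functor on the cocycle categories (reusing the $0$--cochain $\mathrm{f}$ verbatim, with the $\mathrm{f}_{|b|}$ cancellation) fills in detail the paper dismisses as ``immediate,'' and your caveat is well taken: since Theorem \ref{Cb:2} supplies equivalences rather than strict isomorphisms between $\sP(K,G)$ and $\rZ^1(K,G)$, the paper's own argument only yields an equivalence $\sP(K,G)\simeq\sP(K^\circ,G)$, with the strict isomorphism living at the level of $\rZ^1(K,G)\cong\rZ^1(K^\circ,G)$ as you constructed it.
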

%


We now specialize taking our poset $K$ to be a base for the topology of a 
manifold ordered under inclusion. Thus we can
relate (principal) net bundles to
well-known geometrical constructions, involving locally constant
cohomology and flat bundles.

Let $M$ be a paracompact, arcwise connected and locally contractible space. 
We denote by $K$ the poset whose elements are the open, contractible subsets 
of $M$.
A \emph{locally constant cocycle} is given by a pair $( A , f )$, 
where $A \subseteq K$ is a locally finite open cover of $M$, and 
\[
f := \left\{ f_{a'a} : a \cap a' \to G \ , \ a,a' \in A \right\}
\]
is a family of locally constant maps satisfying the cocycle relations
\[
f_{a''a'} (x) f_{a'a} (x) = f_{a''a} (x)
\ ,
\]
$x \in a \cap a' \cap a''$.
A locally constant cocycle $( \tilde{A} , \tilde{f} )$ is said to be 
\emph{equivalent}  to $( A , f )$ if 
there are locally constant maps $v_{\tilde{a}a} : \tilde{a} \cap a \to G$, 
$a \in A$, $\tilde{a} \in \tilde{A}$, 
such that 
\[
v_{a \tilde{a}} (x) \tilde{f}_{ \tilde{a} \tilde{a}'} (x) =
f_{aa'} (x) v_{a'\tilde{a}'} (x),
\]
where  $x \in a \cap a' \cap \tilde{a} \cap \tilde{a}'$, 
$a,a' \in A$, $\tilde{a} , \tilde{a}' \in \tilde{A}$. Note that in locally 
constant cohomology we work with elements of $A$ rather than with
$V_a$, $a \in K$. This makes locally
constant cocycles more manageable than cocycles 
defined over $\tilde\Sigma_0(K)$.

The set of locally constant cocycles is denoted by $\rZ_{lc}^1 (M,G)$
and their equivalence classes by
$\rH_{lc}^1 ( M , G )$
and called the (first) {\em locally constant cohomology} of $M$.\smallskip

Now, if we endow $G$ with the discrete topology, 
then $\rH_{lc}^1(M,G)$ coincides with the usual 
cohomology set classifying principal $G$-bundles over 
$M$ (see Remark \ref{rem_h1lc} below). In this way we obtain the following
\begin{theorem}
\label{thm_hmor}
Let $M$ be a paracompact, arcwise connected, locally contractible
space, and $G$ a group. Then, there are isomorphisms
\[
\rH^1(K,G) 
\ \simeq \
\dot{\rH}{\mathrm{om}} (\pi_1(K),G)
\ \simeq \
\dot{\rH}{\mathrm{om}} (\pi_1(M),G)
\ \simeq \
\rH_{lc}^1(M,G)
\ ,
\]

where $\dot{\rH}{\mathrm{om}} (-,G)$ 
denotes the set of $G$-valued morphisms 
modulo inner automorphisms of $G$.
\end{theorem}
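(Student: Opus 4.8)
The plan is to verify the three displayed isomorphisms one at a time, reading from left to right, by assembling the categorical equivalences already established in the paper; only the last link requires input from outside the present development.

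For the first isomorphism $\rH^1(K,G) \simeq \dot{\rH}{\mathrm{om}}(\pi_1(K),G)$, I would argue purely by bookkeeping over earlier results. By definition $\rH^1(K,G)$ is the set of isomorphism classes of objects of $\rZ^1(K,G)$. Theorem \ref{Cb:2}$(i)$ gives an equivalence of $\rZ^1(K,G)$ with $\sP(K,G)$, so these classes correspond bijectively to isomorphism classes of principal net bundles. Each such bundle carries a unique flat connection (Proposition \ref{Ab:3}), and Lemma \ref{B:7} identifies $\sP(K,G)$ with the full subcategory $\sU_f(K,G)$ of flat connections; hence isomorphism classes of principal net bundles coincide with equivalence classes of flat connections. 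Finally, Corollary \ref{Da:3} supplies the $1$--$1$ correspondence between equivalence classes of flat connections and elements of $\dot{\rH}{\mathrm{om}}(\pi_1(K),G)$. Composing these bijections yields the first isomorphism.

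For the second isomorphism $\dot{\rH}{\mathrm{om}}(\pi_1(K),G) \simeq \dot{\rH}{\mathrm{om}}(\pi_1(M),G)$, I would invoke the identification of fundamental groupoids recalled in Section \ref{Xb}: for $K$ the poset of open, contractible subsets of $M$ ordered under inclusion, one has $\pi_1(K) \cong \pi_1(M)$ by \cite[Theorem 2.18]{Ruz05}. Since an isomorphism of groupoids induces a bijection of the sets of morphisms into the one--object groupoid $G$, and this bijection is manifestly compatible with the equivalence relation identifying two morphisms that differ by an inner automorphism of $G$, it descends to the asserted bijection on $\dot{\rH}{\mathrm{om}}$--sets.

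The third isomorphism $\dot{\rH}{\mathrm{om}}(\pi_1(M),G) \simeq \rH_{lc}^1(M,G)$ is the classical classification of flat principal $G$--bundles, with $G$ carrying the discrete topology, by the conjugacy classes of their holonomy representations. In one direction I would associate to a morphism $\pi_1(M)\to G$ its holonomy data along a good cover, using paracompactness to select a locally finite cover and local contractibility to refine it to contractible pieces on which the transition maps are forced to be locally constant; in the other direction one recovers the representation as the holonomy of a locally constant cocycle. The details are standard and I would cite \cite[I.2]{Kob} (see also Remark \ref{rem_h1lc} and \cite{KN}). This last step is the main obstacle, since it rests on the classical machinery for flat bundles and on a careful treatment of the cover, whereas the first two isomorphisms are essentially bookkeeping over results already proved in the paper.
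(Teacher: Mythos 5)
Your proposal is correct and follows essentially the same route as the paper, which likewise disposes of the first two isomorphisms by appeal to previously established results (\cite{Ruz05}, see also \cite{RR06}) and of the last by the classical classification of cocycles for a totally disconnected structure group, citing \cite[I.13]{Ste} where you cite \cite[I.2]{Kob} and \cite{KN}. Your only deviation is cosmetic: you reassemble the first isomorphism from Theorem \ref{Cb:2}, Lemma \ref{B:7} and Corollary \ref{Da:3} inside the paper, but Corollary \ref{Da:3} is itself deduced from Theorem \ref{Cb:2} and \cite[Corollary 4.6]{RR06}, so this is the same argument with the citations unfolded.
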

\begin{proof}
The first three isomorphisms have been already established
\cite{Ruz05} (see also \cite{RR06}). The last isomorphism, instead, 
follows  using the classical machinery developed in 
\cite[I.13]{Ste} for totally disconnected groups.
In fact, locally constant cocycles $(A,f) \in\rZ_{lc}^1(M,G)$ 
are in one-to-one correspondence with morphisms 
$\chi_{(A,f)}$ from  $\pi_1(M)$ into  $G$. A locally constant cocycle 
$(\tilde{A},\tilde{f})$ 
is equivalent to $(A,f)$ if and only if there is $g \in G$ such 
that $g \ \chi_{(A,f)}=\chi_{(\tilde{A},\tilde{f})}\ g$.
\end{proof}

\begin{remark}
\label{rem_h1lc}
Let $G$ be a {\em topological} group. We consider the set $\rZ^1(M,G)$ 
of {\em cocycles} 
in the sense of \cite[Chp.I]{Kar}, 
and the associated cohomology $\rH^1 ( M,G )$. We recall 
that elements of $\rZ^1(M,G)$ are pairs of the type $(A,f)$, where $A$
is an open, locally finite cover of $M$, and $f$ is a family of 
{\em continuous} maps 
$f_{aa'} : a \cap a' \to$ $G$, $a,a' \in A$, satisfying the cocycle relations. It is 
well-known that such cocycles are in one-to-one correspondence with transition maps of 
principal bundles over $M$. The natural map
\begin{equation}
\label{def_lc_c}
\rH_{lc}^1 (M,G) \to \rH^1(M,G)
\end{equation}
{\em is generally not injective}: in fact, inequivalent cocycles in $\rH_{lc}^1 (M,G)$
may become equivalent in $\rH^1(M,G)$.
\end{remark}

To make the isomorphisms of the previous theorem explicit, 
we provide an explicit map from $\rH_c^1 (K^\circ,G) \simeq \rH^1 (K,G)$ 
to $\rH_{lc}^1 (M,G)$:
\begin{proposition}
\label{prop_c_lcc} 
There is an isomorphism
$\rH_c^1 (K^\circ,G) \to \rH_{lc}^1 (M,G)$.
\end{proposition}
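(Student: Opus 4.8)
The plan is to construct the isomorphism $\rH_c^1(K^\circ,G)\to\rH_{lc}^1(M,G)$ by exploiting the fact that elements of $K$ are genuine open subsets of the manifold $M$, so that each poset-theoretic intersection $V^\circ_{a\tilde a}$ in $K^\circ$ can be compared with the actual topological intersection $a\cap\tilde a\subseteq M$. Recall that $o\in V^\circ_a$ means $o\leq a$ in $K$, i.e. $o\subseteq a$ as open subsets of $M$. A \v Cech cocycle $\xi\in\rZ^1_c(K^\circ,G)$ is a family of locally constant maps $\xi_{a\tilde a}:V^\circ_{a\tilde a}\to G$ on the poset, whereas a locally constant cocycle in $\rZ^1_{lc}(M,G)$ is a family $f_{a'a}:a\cap a'\to G$ of locally constant maps on genuine intersections of open sets. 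The key observation is that a locally constant map on the poset $V^\circ_{a\tilde a}$ is precisely the combinatorial shadow of a locally constant map on the topological set $a\cap\tilde a$: since each element $o\leq a,\tilde a$ corresponds to a contractible (hence connected) open piece sitting inside $a\cap\tilde a$, and since $M$ is locally contractible, the poset faithfully records the path-components of $a\cap\tilde a$.

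First I would fix, for the cover $A$ underlying a locally constant cocycle, the identification of its members with $0$--simplices of $K^\circ$, noting that the open contractible subsets of $M$ form exactly the poset $K$ and that (by the remark after the fundamental covering) any open cover can be refined by the fundamental covering. Given $\xi\in\rZ^1_c(K^\circ,G)$, I would define the associated locally constant cocycle on $M$ by setting its value on a point $x\in a\cap\tilde a$ to be $\xi_{a\tilde a}(o_x)$ for any $o\in K$ with $x\in o\subseteq a\cap\tilde a$; local constancy of $\xi$ together with connectedness of the contractible pieces makes this independent of the choice of $o$, and the cocycle relation $\xi_{\hat a\tilde a}\xi_{\tilde aa}=\xi_{\hat aa}$ transports directly to the topological cocycle relation $f_{a''a'}f_{a'a}=f_{a''a}$ on $x\in a\cap a'\cap a''$. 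Conversely, restricting a genuine locally constant map $f_{a'a}$ on $a\cap a'$ to the elements $o\in V^\circ_{aa'}$ recovers a locally constant poset map, giving the candidate inverse.

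The two main things to verify are that the construction respects the equivalence relations on both sides and that the two assignments are mutually inverse. For the equivalence relations, a coboundary $u=\{u_a\}$ of locally constant poset maps $u_a:V^\circ_a\to G$ corresponds under the same point-to-simplex dictionary to a family of locally constant maps on the open sets $a$, and the poset relation $\xi'_{a\tilde a}u_{\tilde a}=u_a\xi_{a\tilde a}$ becomes the topological equivalence $v_{a\tilde a}\tilde f_{\tilde a\tilde a'}=f_{aa'}v_{a'\tilde a'}$; this is the step where one must be careful to match the indexing conventions of locally constant cohomology (which uses a possibly different cover $\tilde A$) with the poset conventions, and I expect this bookkeeping to be the main obstacle. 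The subtlety is that $\rH^1_{lc}(M,G)$ allows cocycles defined over \emph{different} covers to be compared, whereas $\rZ^1_c(K^\circ,G)$ is defined over the single fundamental covering $\cV^\circ_0$; reconciling these requires showing that any locally finite open cover by contractible sets can be compared to the fundamental covering without changing the cohomology class, which is exactly the refinement remark combined with local constancy.

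Finally I would assemble the pieces: the forward map $\xi\mapsto f$ and the restriction map $f\mapsto\xi$ are checked to compose to the identity in both directions on representatives, which, combined with the verified compatibility with equivalence, shows they descend to mutually inverse maps $\rH^1_c(K^\circ,G)\to\rH^1_{lc}(M,G)$. This would complete the proof of the isomorphism, and together with the identification $\rH^1_c(K^\circ,G)\simeq\rH^1(K,G)$ from Theorem \ref{E:4.0} it makes explicit the last isomorphism of Theorem \ref{thm_hmor}.
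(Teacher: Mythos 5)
Your forward map coincides with the paper's: for a fixed locally finite cover $A\subseteq K$ one sets $\xi^{\mathbf{lc}}_{aa'}(x):=\xi_{aa'}(o)$ whenever $x\in o\subseteq a\cap a'$, and your checks that this is well defined (local contractibility of $M$ gives a common smaller contractible $o$ through any point of an overlap) and compatible with the equivalence relations, including independence of the choice of $A$, match the paper. The genuine gap is in your ``candidate inverse.'' An element of $\rZ_c^1(K^\circ,G)$ is by definition a family $\xi_{aa'}$ indexed by \emph{all} pairs $a,a'$ of $0$--simplices, i.e.\ all pairs of contractible open sets of $M$, whereas a locally constant cocycle $(A,f)$ carries data only for $a,a'\in A$. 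Restricting $f_{aa'}$ to the elements $o\in V^\circ_{aa'}$ therefore does \emph{not} produce an element of $\rZ_c^1(K^\circ,G)$: it produces a partial family defined only on $A\times A$, and the missing components $\xi_{aa'}(o)$ for $a$ or $a'$ outside $A$ cannot be filled in by bookkeeping. Your appeal to ``the refinement remark combined with local constancy'' does not supply this extension: knowing that every $V_a$ is contained in some member of a cover tells you how to compare covers, not how to define the absent components coherently with the cocycle identity. In particular your final step, checking that the two assignments are mutually inverse ``on representatives,'' cannot even be formulated, because the restriction map does not land in the set of representatives.

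This is exactly why the paper's surjectivity argument takes a detour through the fundamental group: given $(A,f)$, it invokes the classical machinery for totally disconnected structure groups (Steenrod, as already used in Theorem \ref{thm_hmor}) to pass to the associated monodromy representation $\chi:\pi_1(M)\to G$, then runs through the isomorphisms $\dot{\rH}{\mathrm{om}}(\pi_1(K),G)\simeq\rH^1(K,G)\simeq\rH_c^1(K^\circ,G)$ of Theorem \ref{thm_hmor} and Theorem \ref{E:4.0} to obtain a \emph{full} \v Cech cocycle $\xi$, and only then observes that its restriction to $A$ is equivalent to $f$ by construction. Some transport-along-paths construction of this kind is unavoidable: extending $f$ from $A$ to values on arbitrary pairs of contractible opens amounts to trivializing the flat bundle determined by $(A,f)$ over every contractible open set, which is a parallel-transport statement whose path-independence is governed by $\pi_1$. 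Note also that injectivity needs its own argument (the paper deduces equivalence of $\xi$ and $\xi'$ from equivalence of their images over \emph{every} locally finite cover, so as to assemble equivalence data indexed by all $a\in K$); it is not a formal consequence of a two-sided inverse, since no inverse on representatives exists.
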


\begin{proof}
Let $A \subseteq K$ be a fixed locally finite, open cover, and $\xi$ a \v Cech 
cocycle. Then, we have a family of locally constant maps
\[
\xi_{aa'} : V^\circ_{aa'} \to G
\ \ , \ \
a,a' \in \tilde\Sigma_0(K^\circ)
\ .
\]
Now, since each $a \cap a'$ is an open set, we find 
$a \cap a' =$
$\cup_{o \in V^\circ_{aa'}} o$.
Since $\xi_{aa'}$ is locally constant on $V^\circ_{a'a}$, we can
define the following locally constant function on $a' \cap a$:
\begin{equation}
\label{eq_wau}
\xi^{\bf lc}_{aa'} (x) 
:= 
\xi_{aa'} (o) 
\ \ , \ \ 
o \in V^\circ_{aa'} , x \in o 
\ \ .
\end{equation}
The family $\xi^{\bf lc} :=$ 
$\left\{ \xi^{\bf lc}_{aa'} \right\}$ clearly satisfies the 
cocycle relations, so $( A , \xi^{\bf lc} )$ is a locally constant cocycle. 
If $\tilde{A} \subseteq K$ is 
another locally finite, open cover, then we obtain a locally constant cocycle 
$( \tilde{A} , \tilde{\xi}^{\bf lc} )$, 
with $\tilde{\xi}^{\bf lc}_{\tilde{a} {\tilde{a}'}} (x)$, $x \in \tilde{a} \cap \tilde{a}'$, 
$\tilde{a} , \tilde{a}' \in \tilde{A}$, defined as in (\ref{eq_wau}). By defining
$v_{a \tilde{a}} (x) :=$
$\xi_{a \tilde{a}} (o)$,
$o \in V^\circ_{a\tilde{a}}$, 
$x \in o$,
we find
$v_{a \tilde{a}} (x) 
\tilde{\xi}^{\bf lc}_{\tilde{a} {\tilde{a}'}} (x)  
v_{a' \tilde{a}'} (x)^{-1} =$
$\xi^{\bf lc}_{aa'} (x)$,
thus $(\tilde{A},\tilde{\xi}^{\bf lc})$ is equivalent to $(A,\xi^{\bf lc})$. 
This implies that the equivalence class 
of $(A,\xi^{\bf lc})$ in $\rH_{lc}^1 ( M,G )$ does not depend on the choice 
of $A$.
If $\xi'$ is equivalent to $\xi$, then  $( A ,\xi^{\bf lc} )$ 
is equivalent to $( A , {\xi'}^{\bf lc} )$ for every open, locally finite, cover
$A$; thus, the map 
\begin{equation}
\label{eq_c_lc}
\xi \mapsto (A,\xi^{\bf lc})
\in \rH_{lc}^1 (M,G)
\ \ , \ \
\xi \in \rH_c^1 (K^\circ,G)
\ ,
\end{equation}
is well defined at the level of cohomology classes.
We prove the injectivity of (\ref{eq_c_lc}). To this end, note
that if $(A,\xi^{\bf lc})$ $(A,{\xi'}^{\bf lc})$ are equivalent, then the above
argument shows that  $( \tilde{A},\xi^{\bf lc} )$ and $( \tilde{A},{\xi'}^{\bf lc})$
are equivalent for every locally finite open cover $\tilde{A} \subseteq K$. 
Hence $\xi$ and $\xi'$ are equivalent.
Finally, we prove the surjectivity of (\ref{eq_c_lc}).
If $( A,f )$ is a locally constant cocycle, consider the
associated representation $\chi : \pi_1(K) \to G$, and the
corresponding \v Cech cocycle $\xi \in \rH_c^1(K,G)$, see
Proposition \ref{E:4.0}, Theorem \ref{thm_hmor}. In particular, the set
$\xi_{aa'}$, $a,a' \in A$, is a locally constant cocycle $( A,\xi
)$, equivalent to $(A,f)$ by construction.
\end{proof}

The above constructions  can be summarized in the diagram
\begin{equation}
\rH_c^1(K^\circ,G) \stackrel{\simeq}{\longrightarrow}
 \rH_{lc}^1 (M,G) \longrightarrow
  \rH^1 (M,G)
\end{equation}
%
The first map (that emphasized by the symbol  "$\simeq$") 
is a isomorphism. The other is, in general, neither
injective nor surjective. 
For example, when $M$ is the $1$-sphere $S^1$ and
$G$ the torus $\mathbb{T}$, we find $\mathbb{T} \simeq$ 
$\Hom ( \mathbb{Z} , \mathbb{T}) \simeq$ $\rH^1(K,\mathbb{T}) \simeq$
$\rH_{lc}^1 ( S^1 , \mathbb{T})$, whilst $\rH^1 ( S^1 , \mathbb{T})$ is trivial.

\section{Conclusions} 

The results of this paper demonstrate how the basic concepts 
and results of the theory of fibre bundles admit an analogue 
for net bundles over posets. In a sequel to this paper, we will 
show that this continues to be true for the K-theory of an 
Hermitian net bundle over a poset, even if the results diverge 
more in this case. In particular, we shall define the Chern 
classes for such bundles.\smallskip 

Our current research aims to develop a far-reaching but very 
natural generalization of the notion of net bundle. This involves 
replacing the poset, the base category of our net bundle, and 
the fibre category, here usually a group, by two arbitrary 
categories, a generalization leading us in the direction of 
fibred categories \cite{Gro}.\smallskip

\appendix

\section{Connection 1--cochains over posets}
\label{Z}
This appendix is intended to provide the reader with the notation 
and a very brief outline  of the results of the cohomological 
description of connections over posets developed in
\cite{RR06}.\bigskip

The present paper treats principal bundles over posets 
having an arbitrary structure group. The lack of a differential 
structure forces us to use a cohomology taking values in the structure 
group $G$. Hence, in general, we will deal with a non-Abelian  
cohomology of posets. In \cite{RR06} the coefficients for the 
non-Abelian $n^{th}$-degree cohomology  of a poset $K$ are an $n$-category
associated with the group $G$. 
In the cited paper the set of $n$--cochains $\rC^n(K,G)$ is defined only  
for  $n=0,1,2,3$. With this restriction, 
there is a coboundary operator $\mathrm{d}:\rC^n(K,G)\to\rC^{n+1}(K,G)$
satisfying  the equation $\mathrm{d}\mathrm{d}=\imath$ where $\imath$ denotes 
the \emph{trivial} cochain. So the sets of \emph{$n$--cocycles} and
\emph{$n$--coboundaries} are defined, respectively, by 
\[
 \rZ^n(K,G):= \rC^n(K,G)\cap Ker(\mathrm{d}), \ \ \ 
 \rB^n(K,G):= \rC^n(K,G)\cap Im(\mathrm{d}).
\]
Here we describe the category of 1--cochains 
in some detail, this being all we need.\smallskip

For $n=0,1$, an  \emph{$n$--cochain} is just a 
mapping $v:\tilde\Si_n(K)\to G$.  Given a $1$--cochain 
$v\in\rC^1(K,G)$, we can and will extend $v$ 
from $1$--simplices to paths by defining for $p=b_n*\cdots*b_1$ 
\[
v(p)  := v(b_n)\, \cdots\,  v(b_2)\, v(b_1).
\]
Given  $v,\tilde v\in\rC^1(K,G)$,
a \emph{morphism} $\mathrm{f}$ from $\tilde v$ to $v$ 
is a function $\mathrm{f}:\tilde\Si_0(K)\rightarrow G$  satisfying the 
equation 
\[
  \mathrm{f}_{\partial_0p} \,  \tilde v(p)  = v(p) \, 
\mathrm{f}_{\partial_1p}, 
\]
for all paths $p$. We denote the set of morphisms from 
$v_1$ to $v$ by $(\tilde v,v)$. 
There is an obvious composition law between morphisms given by pointwise 
multiplication making  $\rC^1(K,G)$ into a category. 
The identity arrow $1_v\in(v,v)$ takes the constant value 
$e$, the identity of the group. 
Given a group homomorphism $\gamma:H\to G$ and a 
morphism $\mathrm{f}\in(\tilde v,v)$ of 
$1$--cochains with values in $H$ then $\gamma\circ v$, defined by 
\begin{equation}
\label{Z:1}
(\gamma\circ  v) (b) := \ga(v(b)), \qquad b\in\tilde\Si_1(K), 
\end{equation}
is a $1$--cochain with values in $G$, and $\gamma\circ\mathrm{f}$
defined by 
\begin{equation}
\label{Z:2}
(\gamma\circ\mathrm{f})_a := \ga(\mathrm{f}_a), \qquad a\in\tilde\Si_0(K), 
\end{equation}
is a morphism of $(\gamma\circ \tilde v,\gamma\circ v)$. 
One checks at once that $\gamma\circ$ is a functor from $\rC^1(K,H)$ to 
$\rC^1(K,G)$, and that if $\ga$ is a group isomorphism, then 
$\ga\circ$ is an isomorphism of categories.\smallskip 

Note that $\mathrm{f}\in(v_1,v)$ implies $\mathrm{f}^{-1}\in(v,v_1)$, where
$\mathrm{f}^{-1}$ here denotes the composition of $\mathrm{f}$ with the inverse of 
$G$. We say that $v_1$ and $v$ are \emph{equivalent}, 
written $v_1\cong v$, whenever $(v_1,v)$ is nonempty. 
Observe that  a $1$--cochain $v$ is equivalent 
to the trivial $1$--cochain $\imath$ if, and only if, it is 
a $1$--coboundary. We will say that $v\in \rC^1(K,G)$  is 
\emph{reducible} if there exists a proper subgroup 
$H\subset G$ and a $1$--cochain $\tilde v\in\rC^1(K,H)$ with  
$\iota_{\sst{G,H}}\circ \tilde v$ equivalent to $v$, where 
$\iota_{\sst{G,H}}$ denotes the 
inclusion $H\subset G$. If $v$ is not reducible it will be said 
to be \emph{irreducible}.\smallskip

A \emph{$1$--cocycle} of $K$ with values in $G$  is a 1--cochain $z$ 
satisfying the equation 
\begin{equation}
\label{Z:3}
 z(\partial_0c)\, z(\partial_2c) = z(\partial_1c), \qquad 
 c\in\tilde\Si_2(K). 
\end{equation}
The \emph{category of $1$--cocycles} with 
values in $G$, is the full subcategory of $\rC^1(K,G)$ whose set of 
objects is $\rZ^1(K,G)$. We denote this category 
by the same symbol  $\rZ^1(K,G)$ as used to denote the corresponding 
set of objects. Clearly, 1--cohomology 
is strictly related to the first homotopy group.  
One first observes that any 
1--cocycle $z$ is \emph{homotopic invariant}, i.e.,
$z(p)=z(q)$ whenever $p$ and $q$ are homotopic paths. 
Using this property,  
\begin{equation}
\label{Z:4}
\rZ^1(K,G)\cong  H(\pi_1(K,a),G), 
\end{equation}
that is, the category $\rZ^1(K,G)$ \emph{is equivalent} to the category 
$H(\pi_1(K,a),G)$
of group homomorphisms from $\pi_1(K,a)$ into $G$. 
Hence,  if $K$ is simply 
connected, then any 1--cocycle is a 1--coboundary.
The set of \emph{connections} with values in $G$ is the subset 
$\rU^1(K,G)$ of those 1--cochains $u$ of $\rC^1(K,G)$ 
satisfying the properties 
\begin{equation}
\label{Z:5}
\begin{array}{lcl}
 (i)  & u(\overline{b})= u(b)^{-1}, &   b\in\tilde\Sigma_1(K), \\[3pt] 
 (ii) & u(\partial_0c)\,  u(\partial_2c) = u(\partial_1c), & 
    c\in  \Si_2(K).
\end{array} 
\end{equation}
The \emph{category of connection $1$--cochains} with values in $G$, is 
the full subcategory of $\rC^1(K,G)$ whose set of objects 
is $\rU^1(K,G)$.  It  is  denoted 
by  $\rU^1(K,G)$ just as  the corresponding 
set of objects. \smallskip

The interpretation of 1--cocycles of $\rZ^1(K,G)$ 
as principal bundles over $K$ with structure group $G$ derives from the 
following facts. Any connection $u$ of $\rU^1(K,G)$ 
induces a unique $1$--cocycle $z\in\rZ^1(K,G)$ 
satisfying the equation 
\begin{equation}
\label{Z:6}
u(b) = z(b), \qquad b\in \Si_1(K).
\end{equation}
$z$ is called the cocycle \emph{induced} by $u$. Denoting 
the set of connections of inducing 
the $1$--cocycle $z$ by  $\rU^1(K,z)$, we have that 
\begin{equation}
\label{Z:7}
 \rU^1(K,G) = \dot{\cup}\big\{ \rU^1(K,z) \ | \
 z\in\rZ^1(K,G)\big\},
\end{equation}
where the symbol $\dot{\cup}$ means disjoint union. 
So, the set  $\rU^1(K,z)$ can be seen as the set of connections 
of the principal bundle associated with $z$.  
We call the category of
\emph{connections inducing} $z$, the full subcategory 
of $\rU^1(K,G)$ whose objects belong to $\rU^1(K,z)$, and 
denote this category 
by $\rU^1(K,z)$ just as  the corresponding
set of objects.\smallskip 

The relation between 
cocycles (connections) taking values in different groups is easily established.
Given a group homomorphism $\ga:H\to G$,
then the restriction of the functor 
$\ga\circ$  to $\rZ^1(K,H)$  ( $\rU^1(K,H)$ ) defines 
a functor from $\rZ^1(K,H)$ into $\rZ^1(K,G)$ and 
( $\rU^1(K,H)$ into $\rU^1(K,G)$ ). This functor is an isomorphism 
when $\ga$ is a group isomorphism.

\end{document}